    \let\subsubsection\subparagraph
    \title  {A deformation of instanton homology for webs}
    \author {P. B. Kronheimer and T. S. Mrowka%
      \thanks{%
        The work of the first author was supported by the National
        Science Foundation through NSF grants
        DMS-1405652 and DMS-1707924. The work of the second author was supported by
        NSF grant DMS-1406348, and by a grant from the Simons Foundation,
        grant number 503559 TSM.}}
    \address {Harvard University, Cambridge MA 02138 \\
              Massachusetts Institute of Technology, Cambridge MA 02139}
\begin{document}

\maketitle

\begin{abstract}
A deformation of the authors' instanton homology for webs is
constructed by introducing a local system of coefficients. In the case
that the web is planar, the rank of the deformed instanton homology
is equal to the number of Tait colorings of the web.
\end{abstract}

\section{Introduction}

\subsection{Statement of results}

In an earlier pair of papers \cite{KM-jsharp,KM-jsharp-triangles}, the
authors studied an $\SO(3)$ instanton homology for ``webs'' (embedded
trivalent graphs) in closed, oriented $3$-manifolds. In particular, to
a web $K\subset \R^{3}$, the authors associated a vector space
$\Jsharp(K)$ over the field $\F=\Z/2$. One of the reasons for being
interested in $\Jsharp$ is that, in conjunction with the other results of
\cite{KM-jsharp}, the following conjecture implies the four-color
theorem \cite{Appel-Haken-book}.

\begin{conjecture}[\cite{KM-jsharp}]\label{conj:tait}
    If $K$ lies in the plane $\R^{2}\subset \R^{3}$, then the
    dimension of $\Jsharp(K)$ is equal to the number of Tait colorings
    of $K$.
\end{conjecture}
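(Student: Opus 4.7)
The plan is to attempt an induction on the combinatorial complexity of the planar web, using a Floer-theoretic skein exact triangle for $\Jsharp$ together with a parallel additive recursion for the number of Tait colorings, with small webs such as the theta graph (for which a direct calculation gives $\dim\Jsharp = 6$) as the base case.

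The first step is to establish a long exact sequence
\[
\cdots \to \Jsharp(K_0) \to \Jsharp(K_1) \to \Jsharp(K_2) \to \cdots
\]
for a triple of planar webs $(K_0, K_1, K_2)$ differing only inside a small disk by three standard local resolutions. Such triangles are typical in instanton Floer theory and should come from a surgery argument on $\SO(3)$ connections over the web complement, in the spirit of \cite{KM-jsharp-triangles}. The local model must be chosen so that the number of Tait colorings obeys the matching deletion--contraction identity on the same triple of webs; this is a classical identity for proper edge $3$-colorings of planar cubic graphs. Combined with the base case and the multiplicativity of both sides under disjoint union, the induction would then reduce the conjecture to showing, at each step, that the exact triangle splits into a short exact sequence.

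The hardest part, and presumably the reason Conjecture~\ref{conj:tait} remains open, is precisely this splitting. Exactness alone gives only the inequality
\[
\dim \Jsharp(K_1) \leq \dim \Jsharp(K_0) + \dim \Jsharp(K_2),
\]
and promoting it to equality requires controlling the connecting homomorphism, which over $\F$ is sensitive to reducible flat connections and to delicate orientation phenomena in $\SO(3)$ instanton theory. A natural remedy, and I suspect the one that motivates this paper, is to enrich the chain complex with a local system of coefficients compatible with the exact triangle whose total rank already matches the number of Tait colorings; if the connecting map can be forced to vanish over the deformed coefficient ring, one obtains the equality for the deformed homology stated in the abstract, and Conjecture~\ref{conj:tait} itself reduces to a specialization question about the behaviour of this local system at the classical untwisted point.
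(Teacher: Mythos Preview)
The statement you are addressing is a \emph{conjecture}; the paper does not prove it, and you yourself correctly flag the splitting of the skein triangle as the unresolved obstacle. So your proposal is not a proof, and there is no proof in the paper to compare it against. What the paper does prove is the deformed analogue (Theorem~\ref{thm:deformed-is-Tait}) and, from it, the inequality $\dim_{\F}\Jsharp(K)\ge \#\{\text{Tait colorings}\}$ (Corollary~\ref{cor:Jsharp-planar-inequality}). The full conjecture is then recast as the assertion that a Bockstein-type spectral sequence collapses; the paper shows $d_1=d_2=d_3=0$ in general (Proposition~\ref{prop:d4-ss}), so the open question is the vanishing of $d_r$ for $r\ge 4$ in the planar case.

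Your outline of the deformed strategy is close in spirit but differs in a substantive way from what the paper actually does. You propose a global skein induction, hoping the connecting map vanishes over the deformed ring. The paper does \emph{not} argue that way: instead it exploits the deformed cubic relation $u_e^3 + P u_e = 0$ with $P\ne 0$ to decompose $\Jsharp(K;\Gamma')$ into simultaneous generalized eigenspaces $V(K;s)$ indexed by subsets $s$ of the edge set, shows that only perfect matchings $s$ contribute, and then computes each $V(K;s)$ for planar $K$ by surgering the edges in $s$ (via exact triangles and excision applied \emph{within} a fixed eigenspace, where the relevant maps genuinely are isomorphisms) down to an unlink. The Tait-coloring count emerges from summing $2^{n(s)}$ over even $1$-sets. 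So the exact triangles are used, but locally inside eigenspace summands where one of the three terms is forced to vanish, rather than as the backbone of a global inductive recursion whose connecting maps one must separately control.
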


The homology $\Jsharp(K)$ is constructed from the Morse theory of the
Chern-Simons functional on a space of connections $\bonf$ associated
with $K$. In this paper, we introduce a system of local coefficients
$\Gamma$ on $\bonf$ and use it to define a variant,
$\Jsharp(K;\Gamma)$, which is a module over the ring $R=\F[\Z^{3}]$
(elements of which we write as finite Laurent series in variables
$T_{1}$, $T_{2}$, $T_{3}$). The property that is conjectural for
$\Jsharp(K)$ is a theorem for its deformation $\Jsharp(K;\Gamma)$.

\begin{theorem}
    \label{thm:deformed-is-Tait}
   If $K$ lies in the plane, then the rank of $\Jsharp(K;\Gamma)$ as
   an $R$-module is equal to the number of Tait colorings of $K$.
\end{theorem}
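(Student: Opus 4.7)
The plan is to prove the rank equality by induction on a complexity invariant of the planar web $K$ (say, the number of edges), using a skein-type exact triangle for $\Jsharp(\cdot\,;\Gamma)$ as the main engine. The base of the induction would consist of small test cases---the empty web, an unknotted circle, and the theta web---whose deformed homologies one computes directly from the critical-point structure of the Chern--Simons functional and checks against their Tait-coloring counts.

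For the inductive step, I would resolve a chosen edge of $K$ in two ways to produce simpler webs $K_0$ and $K_1$. On the topology side, one establishes a long exact triangle
\[
\cdots \to \Jsharp(K_0;\Gamma) \to \Jsharp(K;\Gamma) \to \Jsharp(K_1;\Gamma) \to \cdots
\]
by the standard surgery and cobordism techniques from the earlier papers, adapted to carry the local coefficient system $\Gamma$. On the combinatorial side, the number of Tait colorings of a planar trivalent graph satisfies the matching recursion $\tau(K) = \tau(K_0) + \tau(K_1)$, so the theorem reduces to additivity of $R$-ranks across the exact triangle.

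The main obstacle, and the very reason for introducing $\Gamma$, is proving this rank additivity---equivalently, that the connecting homomorphism vanishes after tensoring with the field of fractions $Q$ of $R$. The three variables $T_1, T_2, T_3$ must be chosen so as to separate the three color classes, so that over $Q$ the contributions from distinct Tait colorings cannot cancel. Concretely, for planar $K$ one expects the (perturbed) critical set of the Chern--Simons functional on $\bonf$ to be identified with the set of Tait colorings of $K$; the key technical lemma will be a non-degeneracy statement showing that, over $Q$, the deformed Floer complex is concentrated in generators indexed by Tait colorings and has trivial differential. This collapse would simultaneously force the skein triangle to split into short exact sequences over $Q$ and identify the $Q$-dimension of $\Jsharp(K;\Gamma)\otimes_R Q$ with $\tau(K)$, yielding the stated rank equality.
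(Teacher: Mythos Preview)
Your proposal has two genuine gaps that prevent it from going through as stated.

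First, the recursion $\tau(K)=\tau(K_0)+\tau(K_1)$ is not correct for the exact triangles that are actually available here. In the triangle relating $L_2$, $K_1$, $K_0$ (an ``H'' web and two smoothings across a ball), a short case check on the four boundary colors shows that $\tau(K_0)+\tau(K_1)-\tau(L_2)$ equals twice the number of exterior colorings in which all four strands carry the \emph{same} color; this is typically nonzero. So even if you could force rank-additivity in the triangle over the fraction field, the combinatorial side does not match. More basically, a 3-periodic exact triangle of $R$-modules carries no Euler-characteristic constraint at all without an auxiliary grading, so ``rank additivity'' is not a formality---it is the entire content of the theorem, and your proposal gives no mechanism for it.

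Second, the claim that the perturbed critical set is in bijection with Tait colorings is not true in general. Already for the unknot and for $\theta$ the representation variety is a positive-dimensional manifold ($\RP^{2}$, respectively the flag manifold), and the number of perturbed critical points depends on the Morse function, not just on $K$. For larger planar webs the chain group is strictly bigger than $\tau(K)$ in all known computations, so one cannot hope for a ``trivial differential over $Q$'' argument at the level of generators; that is essentially assuming Conjecture~\ref{conj:tait} (indeed something stronger) rather than proving the theorem.

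The paper's route is structurally different and avoids both issues. The nonzero element $P\in R$ produced by the cubic relation $u_e^{3}+P u_e=0$ lets one invert $P$ and decompose $\Jsharp(K;\Gamma')$ into simultaneous generalized eigenspaces $V(K;s)$ for the commuting edge operators. The vertex relations force $V(K;s)=0$ unless $s$ is a $1$-set, and then exact triangles are used \emph{locally}, not to run a global induction on $\tau$, but to show that $V(K;s)$ depends only on the relative homology class of $s$ modulo the complementary $2$-set $C$. For planar $K$ this reduces each $V(K;s)$ to an unlink computation, giving rank $2^{n(s)}$ for even $1$-sets and $0$ otherwise; summing recovers $\tau(K)$ by an elementary count. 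Planarity enters only at the very end (the cycles of $C$ bound disjoint disks), which is why the argument does not extend to non-planar webs, as the linked-handcuffs example shows.
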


From the construction of $\Jsharp(K;\Gamma)$ as a Morse homology with
coefficients in a local system, it is apparent that it is the homology
of a complex $(C,\partial)$ of free $R$-modules. Furthermore, the
original $\Jsharp(K$) can be recovered as the homology of the complex
$(C\otimes_{R} \F, \partial\otimes 1)$, where $\F$ is made into an
$R$-module by evaluation at $T_{i}=1$. From this it follows that there
is (for all $K$) an inequality,
\begin{equation}\label{eq:main-inequality}
          \dim_{\,\F}\Jsharp(K) \ge \rank_{R}\Jsharp(K; \Gamma),
\end{equation}
because the rank of the differential $\partial\otimes 1$ cannot be
larger than the rank of $\partial$.
 From
the theorem above, we now obtain:

\begin{corollary}
    \label{cor:Jsharp-planar-inequality}
    If $K$ lies in the plane, then the dimension of $\Jsharp(K$) is
    greater than or equal to the number of Tait colorings of $K$.
\end{corollary}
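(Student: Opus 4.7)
The plan is to deduce the corollary immediately from the combination of Theorem \ref{thm:deformed-is-Tait} and the inequality \eqref{eq:main-inequality}. Both ingredients are already in hand: the inequality holds for every web $K$, and the theorem identifies the right-hand side with the number of Tait colorings in the planar case. The work is just to chain these two facts together.

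Expanding slightly on \eqref{eq:main-inequality}: since $\Jsharp(K;\Gamma)$ is the homology of a complex $(C,\partial)$ of finitely generated free $R$-modules, its $R$-rank can be computed after tensoring with the fraction field of $R$, where it equals $\rank_{R}C_{n} - \rank_{R}\partial_{n} - \rank_{R}\partial_{n+1}$ in each degree $n$. Specialization via $T_{i}\mapsto 1$ preserves the ranks of the free modules $C_{n}$ but cannot increase the rank of any differential: a non-vanishing $k\times k$ minor of $\partial\otimes 1$ over $\F$ arises from a Laurent-polynomial minor of $\partial$ whose value at $T_{i}=1$ is non-zero, and therefore must be non-zero in $R$ as well. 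Consequently, passing from $R$ to $\F$ may shrink the boundaries and thereby enlarge the homology, giving
\[
    \dim_{\,\F}\Jsharp(K) \;\ge\; \rank_{R}\Jsharp(K;\Gamma).
\]

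Substituting the planar identification from Theorem \ref{thm:deformed-is-Tait} into this inequality yields the corollary. There is no real obstacle here: all the substantive content is carried by Theorem \ref{thm:deformed-is-Tait}, and the remainder is simply the standard specialization principle for a chain complex of free modules over a Laurent polynomial ring, evaluated at the point $T_{1}=T_{2}=T_{3}=1$.
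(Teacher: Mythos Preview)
Your argument is correct and is exactly the approach the paper takes in the introduction: combine Theorem~\ref{thm:deformed-is-Tait} with the inequality~\eqref{eq:main-inequality}, the latter justified by the observation that specializing $T_i\mapsto 1$ cannot increase the rank of the differential. One small caveat: the Morse ``complex'' $(C,\partial)$ here is in general an ungraded differential $R$-module rather than a graded chain complex, so your degreewise formula $\rank C_n - \rank\partial_n - \rank\partial_{n+1}$ should be replaced by $\rank C - 2\rank\partial$; the minor argument and the conclusion are unaffected.
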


Although this corollary is ``half'' of Conjecture~\ref{conj:tait}, it
does not have any implication for the four-color theorem. It does,
however, put Conjecture~\ref{conj:tait} into perspective. The
inequality~\eqref{eq:main-inequality} can be refined by constructing a
spectral sequence, which allows us to
interpret Conjecture~\ref{conj:tait} as saying that, in the planar
case, a certain spectral sequence
collapses. For non-planar webs, the spectral sequence may not collapse,
and the inequality \eqref{eq:main-inequality} can be strict. Indeed, in
section~\ref{sec:counterexample}, we give an example of a web in
$\R^{3}$ for which $\Jsharp(K;\Gamma)$ has rank $0$ while
$\Jsharp(K)$ is non-zero. Whether or not the web is planar, the
differentials $d_{1}$, $d_{2}$ and $d_{3}$ in our spectral sequence
are always zero (as a consequence of a non-trivial calculation, Proposition~\ref{prop:d4-ss}), so
the first interesting question arises with $d_{4}$.

\subsection{Ingredients of the proof}

The content of Theorem~\ref{thm:deformed-is-Tait} is that the
deformed instanton homology can be effectively computed when $K$ is
planar. As an introduction to why this is so, recall that for each
edge $e$ of $K$ there is an operator $u_{e}$ defined on $\Jsharp(K)$
\cite{KM-jsharp} which satisfies $u_{e}^{3}=0$. We will see that there
is a similar operator on the deformed instanton homology
$\Jsharp(K;\Gamma)$ and that in the deformed case there is a relation
\begin{equation}\label{eq:u-relation}
          u_{e}^{3} + P u_{e} =0,
\end{equation} 
where $P\in R$ is the non-zero element
\begin{equation}\label{eq:p}
             P = T_{1}T_{2} T_{3} + T_{1}T_{2}^{-1}T_{3}^{-1} +
             T_{2}T_{3}^{-1}T_{1}^{-1} + T_{3}T_{1}^{-1}T_{2}^{-1}.
\end{equation}
The polynomial satisfied by $u_{e}$ therefore has two roots $0$ and
$P^{1/2}$, the second of which is repeated (because we are in
characteristic $2$). After replacing $R$ by its
field of fractions and adjoining $P^{1/2}$, we may therefore decompose
$\Jsharp(K;\Gamma)$ into the generalized eigenspaces of the operator
$u_{e}$. As $e$ runs through all edges, we obtain a collection commuting operators
whose generalized eigenspaces provide a finer decomposition of the
instanton homology. This eigenspace decomposition leads to
Tait colorings in the planar case.

The proof of the relation \eqref{eq:u-relation}, and the calculation
of $P$, involves an explicit understanding of some small instanton
moduli spaces. The particular form of $P$ is also central to the proof
that the differentials $d_{1}$, $d_{2}$, $d_{3}$ are zero in the
spectral sequence (Proposition~\ref{prop:d4-ss}), the essential point
being that $P$ vanishes to order $4$ at the point $(1,1,1)$.

\subsection{Remarks}

We have restricted our exposition in this introduction to the case that $K$ lies in $\R^{3}$ or
$S^{3}$. But the construction of $\Jsharp(K;\Gamma)$, and
the spectral sequence which leads to the 
inequality \eqref{eq:main-inequality}, both extend without
modification to the more general case of webs in a closed, oriented
$3$-manifold $Y$. We will develop the construction with this greater
generality, returning to the case of $\R^{3}$ for the proof of Theorem~\ref{thm:deformed-is-Tait}.

\section{An equivalent construction of $\Jsharp(K)$}

\subsection{Summary of the original construction}

The basic objects of study in \cite{KM-jsharp} are closed, oriented, three-dimensional
orbifolds $\check Y$ whose local isotropy groups are all either $\Z/2$
or $\Z/2\times \Z/2$. We call such an orbifold a \emph{bifold}. The
underlying topological space $|\check Y|$ is a 3-manifold, and the singular set
is an embedded trivalent graph, or web, $K\subset | \check Y|$. In the
other direction, given an
oriented 3-manifold $Y$ and a web $K\subset Y$, there is corresponding
bifold which we may denote simply by $(Y, K)$.

 By a \emph{bifold
  connection} $(E,A)$ over $\check Y$, we mean an orbifold $\SO(3)$
bundle $E$, with a connection $A$, such that the action of the isotropy groups on the fibers
of $E$ at the singular points is non-trivial. \emph{Marking
  data} $\mu$ on $\check Y$ consists of an open set $U_{\mu}$ and an
$\SO(3)$ bundle $E_{\mu}\to U_{\mu}\setminus K$. A bifold bundle $E$
is \emph{marked} by $\mu$ if there is given an isomorphism $\sigma :
E_{\mu} \to E|_{U_{\mu}}$. An \emph{isomorphism} $\tau$ between
$\mu$-marked bundles with connection, $(E,A,\sigma)$,
$(E',A',\sigma')$ is an isomorphism of bifold bundles-with-connection such that the
automorphism $\sigma^{-1}\tau\sigma' : E_{\mu}\to E_{\mu}$ lifts to
the determinant-1 gauge group. The marking data $\mu$ is \emph{strong} if the
automorphism group of every $\mu$-marked bifold connection is trivial.

Two simple examples of strong marking data are highlighted in
\cite{KM-jsharp}. The first is on the bifold $(S^{3}, H)$ where $H$ is
a Hopf link. In this example, the marking data $\mu_{H}$ has $U_{\mu}$
a ball containing $H$ and $E_{\mu}$ is a bundle with $w_{2}(E_{\mu})$
non-zero on the boundary of the tubular neighborhood of either
component of $H$. The second example is $(S^{3}, \theta)$ where
$\theta$ is an standardly embedded theta-graph. (This orbifold is the
global quotient of $S^{3}$ by an action of the Klein 4-group.) In this
example, the marking data $\mu_{\theta}$ again has $U_{\mu}$ a ball
containing $\theta$, and $E_{\mu}$ is the trivial bundle.

Given strong marking data $\mu$ on $\check Y$, there is a Banach
manifold $\bonf_{l}(\check Y;\mu)$ parametrizing isomorphism classes
of marked bundles with connection, of Sobolev class $L^{2}_{l}$. In this case, one
may define an instanton homology group $J(\check Y; \mu)$ as the Morse
homology of a perturbed Chern-Simons functional on $\bonf_{l}(\check
Y; \mu)$ with coefficients in the prime field $\F$ of characteristic
$2$. The two examples in the previous paragraph both have
one-dimensional instanton homology:
\[
\begin{aligned}
    J( (S^{3}, H) ; \mu_{H}) &= \F \\
 J( (S^{3}, \theta) ; \mu_{\theta}) &= \F.
\end{aligned}
\]

In \cite{KM-jsharp}, the authors defined $\Jsharp(\hat Y)$ for an
arbitrary bifold $\hat Y$ without marking data by forming a connected
sum with $(S^{3}, H)$, with its own marking. That is,
\[
                    \Jsharp(\check Y)\; := \;J( \check Y \csum (S^{3}, H) \; ; \; \mu_{H}).
\]
Here, when writing a connected sum, we mean that the
connected sum is formed at non-singular points of the two bifolds. The
definition is valid, because the marking data $\mu_{H}$ is strong on
the connected sum. In
the case that $\check Y$ arises from  web $K\subset \R^{3} \subset
S^{3}$, we simply write $\Jsharp(K)$.

In the general case, to make $\Jsharp$ natural, we should take $\check Y$ to be a closed bifold
with a framed basepoint in the non-singular part, in order to form the
connected sum. Having done this, $\Jsharp$ becomes a functor from a
category in which a morphism is a  4-dimensional bifold cobordism with
a framed arc joining the basepoints at the two ends of the
cobordism. As in dimension three, the underlying topological space
$|\check X|$ of a 4-dimensional bifold is a 4-manifold, and the
singular set is a restricted type of two-dimensional subcomplex called
a \emph{foam}. The space of morphisms in the category can be extended
by allowing the foams to carry extra data in the form of \emph{dots}
on the faces of the foam. See \cite{KM-jsharp}.

\subsection{Replacing the Hopf link with the theta graph}
\label{subsec:replace}

While this was not pursued in \cite{KM-jsharp},
one could form a connected sum with $(S^{3},\theta)$ instead of
$(S^{3}, H)$ above. Let us temporarily write
\begin{equation}\label{eq:Jdag-def}
                   J^{\dag}(\check Y) \; := \;J( \check Y \csum (S^{3}, \theta) \; ; \; \mu_{\theta}).
\end{equation}
We shall show that $J^{\dag}(\check Y)$ and  $\Jsharp(\check Y)$ are
isomorphic for all $\check Y$.
We then have:

\begin{proposition}
    On the
    cobordism category of bifolds with framed base-point, the two
    functors $J^{\dag}$ and $\Jsharp$ are naturally isomorphic.
\end{proposition}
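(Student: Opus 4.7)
The plan is to compare both $\Jsharp$ and $J^{\dag}$ to a common ``doubly decorated'' construction. Define
\[
    J^{\flat}(\check Y) \; := \; J\bigl( \check Y \csum (S^{3},H) \csum (S^{3},\theta)\,;\, \mu_{H}\cup \mu_{\theta}\bigr),
\]
where the two connected sums are performed at distinct non-singular basepoints in $\check Y$, disjoint from the framed basepoint used for functoriality. Since strong marking data is a local condition and the two pieces of data are supported in disjoint balls, $\mu_{H}\cup\mu_{\theta}$ is again strong, so $J^{\flat}$ is a well-defined functor on the cobordism category of based bifolds.

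First I would construct natural transformations $\alpha\colon \Jsharp \Rightarrow J^{\flat}$ and $\beta\colon J^{\dag}\Rightarrow J^{\flat}$ by ``birth'' cobordisms. For $\alpha$, take the cylinder $[0,1]\times(\check Y \csum (S^{3},H))$ and connect-sum into the top slice, at an interior non-singular point, a standard 4-dimensional bifold ball whose boundary is $(S^{3},\theta)$, extending the marking $\mu_{H}$ by the additional local data $\mu_{\theta}$. Functoriality of $J$ applied to this marked bifold cobordism yields a natural map $\alpha_{\check Y}\colon \Jsharp(\check Y)\to J^{\flat}(\check Y)$. The transformation $\beta$ is constructed symmetrically, swapping the roles of $H$ and $\theta$.

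The key step is to show that $\alpha$ and $\beta$ are isomorphisms, after which the desired natural isomorphism is $\alpha^{-1}\circ \beta$. For $\alpha$, I would compose the birth cobordism with its time-reversed ``death'' cobordism, which closes off the inserted $\theta$-piece and leaves, locally, a closed bifold 4-sphere containing the foam obtained by doubling the bifold ball. Under the standard neck-stretching and gluing formalism for marked bifold instantons, this composition acts on $\Jsharp(\check Y)$ as multiplication by the relative invariant of this closed local configuration, which takes values in $\F$. The claim then reduces to showing that this scalar is $1$, which is equivalent to the unit-ness statement that the birth cobordism sends the generator of $\Jsharp$ of the ``empty'' bifold configuration to the generator of $J((S^{3},\theta);\mu_{\theta})=\F$. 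The identical argument works for $\beta$ using $J((S^{3},H);\mu_{H})=\F$.

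Naturality with respect to 4-dimensional cobordisms of $\check Y$ is then essentially automatic: the birth cobordism is supported in a small ball disjoint from the framed arc carrying the basepoint, so it commutes (in the cobordism category) with any cobordism between two choices of $\check Y$, and therefore the induced maps on Floer homology commute as well. The main obstacle I expect is the verification of the unit-ness step, that is, pinning down the relative invariant attached to the bifold ball with $\theta$-graph (or Hopf-link) boundary as the generator of the corresponding one-dimensional Floer group. Once this moduli-space input is in place, the passage to general $\check Y$ is a stabilization and the rest of the argument is formal.
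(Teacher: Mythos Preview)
Your doubly-decorated $J^{\flat}$ is the right intermediate object, and the paper's proof also passes through it. But there is a genuine gap in your argument for invertibility of $\alpha$. The birth--death composition for the $\theta$ summand is the endomorphism of $\Jsharp(\check Y)$ obtained by inserting the closed theta-foam $\Theta$ (three disks sharing a common boundary circle) into the product cobordism, so the scalar you need is $\langle\Theta\rangle$. But $\langle\Theta\rangle=0$: the dimension formula shows that the contributing zero-dimensional moduli spaces would have negative action and are therefore empty (this is exactly the undotted case of the $\Theta$-evaluation computed later in the paper, Lemma~\ref{lem:Theta-evals}). Hence $\alpha^{*}\circ\alpha=0$, and your argument does not show $\alpha$ is invertible. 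The ``unit-ness'' step you flagged as the main obstacle is not just delicate --- it actually fails.

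The paper avoids this by not attempting to invert a single cobordism map. Instead it applies the connected-sum theorem twice (once using the $H$-version of excision, once using the $\theta$-version) to obtain natural isomorphisms
\[
J^{\flat}(\check Y)\;\cong\;\Jsharp(\check Y)\otimes I, \qquad J^{\flat}(\check Y)\;\cong\;J^{\dag}(\check Y)\otimes I,
\]
where $I=J\bigl((S^{3},H\cup\theta);\mu_{H}\cup\mu_{\theta}\bigr)$. All that is needed is $I\neq 0$; over the field $\F$ this already forces $\Jsharp(\check Y)\cong J^{\dag}(\check Y)$, and naturality comes from naturality of the excision isomorphisms. Note in particular that the paper does not claim $\dim I=1$, and if $\dim I>1$ then your $\alpha$ cannot be an isomorphism even on dimension grounds, since $J^{\flat}(\check Y)\cong\Jsharp(\check Y)\otimes I$ is then strictly larger than $\Jsharp(\check Y)$.
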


\begin{proof}
    The proof is an application of an excision principle, stated in
    two variants as
    Proposition~4.1 and Proposition~4.2 in \cite{KM-jsharp}. As an
    application of the first version, a connected sum theorem is given
    as Proposition~4.3 in \cite{KM-jsharp}, and we restate here as an
    isomorphism
   \begin{equation}\label{eq:H-connected-sum}
           J( \check Y_{1} \csum \check Y_{2} \csum (S^{3}, H) ; \mu)
                       = J( \check Y_{1}  \csum (S^{3}, H)
                       ; \mu) \otimes J(  \check Y_{2} \csum (S^{3}, H) ; \mu).
   \end{equation}
   In the version described in \cite{KM-jsharp}, the marking $\mu$ was
   $\mu_{H}$ throughout. But we could also take additional marking
   data $\mu_{i}$ on $\check Y_{i}$ for $i=1,2$. So, on the left of
   the above isomorphism we could take
\[
           \mu = \mu_{1} \cup \mu_{2} \cup \mu_{H}
\]
  and so on. While this connected sum theorem is an application of
  Proposition~4.1 of \cite{KM-jsharp} and involves the Hopf link $H$,
  one can similarly use the other version, Proposition~4.2, and
  replace $H$ with $\theta$. Thus we also have
 \begin{equation}\label{eq:Theta-connected-sum}
           J( \check Y_{1} \csum \check Y_{2} \csum (S^{3}, \theta) ; \mu)
                       = J( \check Y_{1}  \csum (S^{3}, \theta)
                       ; \mu)\; \otimes\; J(  \check Y_{2} \csum (S^{3}, \theta) ; \mu).
   \end{equation}

   We now apply \eqref{eq:H-connected-sum} with $\check Y_{1} = \check
   Y$ and $\check Y_{2} = (S^{3}, \theta)$. For marking data, we take
   $\mu = \mu_{2} \cup \mu_{H}$, where $\mu_{2} = \mu_{\theta}$. From
   this we obtain
   \[
             J( \check Y \csum  (S^{3}, \theta) \csum (S^{3}, H) ;
             \mu_{\theta} \cup \mu_{H})
                       = \Jsharp(\check Y)
                         \otimes J(  (S^{3}, \theta) \csum (S^{3}, H) ;
             \mu_{\theta} \cup \mu_{H}).
   \] 
   In a similar way, from \eqref{eq:Theta-connected-sum}, we obtain
   \[
                        J( \check Y \csum  (S^{3}, \theta) \csum (S^{3}, H) ;
             \mu_{\theta} \cup \mu_{H})
                       = J^{\dag}(\check Y)
                         \otimes J(  (S^{3}, \theta) \csum (S^{3}, H) ;
             \mu_{\theta} \cup \mu_{H}).
   \]
   Combining the last two isomorphisms, we have an isomorphism of
   finite-dimensional $\F$-vector spaces of the form
   \begin{equation}\label{eq:iso-times-I}
             \Jsharp(\check Y) \otimes I = J^{\dag}(\check Y)\otimes I
    \end{equation}
   where $I = J(S^{3}, H\cup\theta; \mu)$. In the notation of
   \cite{KM-jsharp}, the vector space $I$ is $I^{\sharp}(\theta)$,
   which is non-zero by the results of Section~7.1 of
   \cite{KM-jsharp}. It follows that $\Jsharp(\check Y)$ and
   $J^{\dag}(\check Y)$ are isomorphic.

   The excision isomorphisms in \cite{KM-jsharp} are natural with
   respect to bifold cobordisms, and it follows that the isomorphism
   \eqref{eq:iso-times-I} expresses a functorial isomorphism between
   $\Jsharp$ and $J^{\dag}$.
\end{proof}

From this point on, we will use $\theta$ rather than $H$ in the
construction of $\Jsharp$. That is, we will drop the notation
$J^{\dag}$ and take \eqref{eq:Jdag-def} as the \emph{definition} of
$\Jsharp$. For brevity, we will sometimes write $\bonf^{\sharp}(\check
Y)$ for the relevant space of marked bifold connections:
\begin{equation}\label{eq:bonf-sharp}
       \bonf^{\sharp}(\check Y) = \bonf( \check Y \csum (S^{3}, \theta) ; \mu_{\theta}).
\end{equation}

\section{Instanton homology with local coefficients}

\subsection{Maps to the circle}
\label{sec:maps-to-S1}

Consider again the standard theta-graph $\theta\subset S^{3}$. The
corresponding orbifold $(S^{3}, \theta)$ is a quotient of a standard
3-sphere $\hat S^{3}$ by the Klein 4-group
$V_{4}$. Take the
marking data $\mu_{\theta}$ to have $U_{\mu} = S^{3}$ with $E_{\mu}$
trivial. (This is equivalent to the previous description, in which
$U_{\mu}$ was a ball in $S^{3}$ containing $\theta$.) 
The space of marked connections $\bonf_{\theta} = \bonf((S^{3}, \theta) ;
\mu)$ parametrizes bifold $\SO(3)$ connections $(E,A)$ equipped with a
homotopy-class of trivializations of $E$ on the non-singular part,
$S^{3}\setminus \theta$, or equivalently, a lift of the structure
group of $E$ to $\SU(2)$. If we pull back this data to $\hat S^{3}$, we
obtain a pair $(\check E, \check A)$ consisting of an $\SU(2)$ bundle
with a connection, and an action of the quaternion group  $Q_{8}$ on
the bundle, preserving the connection. The quotient $V_{4}= Q_{8} /
(\pm 1)$ acts on the base, while the kernel $\pm 1\subset Q_{8}$ acts
on the fibers of the bundle.

Let $I_{1}$, $I_{2}$ and $I_{3}$ be the non-trivial elements of
$V_{4}$, and let $\hat I_{m}$ be lifts of these in $Q_{8}$ satisfying
$\hat I_{1}\hat I_{2} = \hat I_{3}$. Let $s_{+}$ and $s_{-}$ be the
two vertices of the theta graph, and let $\hat s_{\pm}$ be their
unique preimages in $\hat S^{3}$  Let $\gamma_{m}$ ($m=1,2,3$) be
the arcs of $\theta$ where the isotropy group is generated by $I_{m}$,
and let $\hat \gamma_{m}$ be chosen lifts of these, as paths from
$\hat s_{-}$ to $\hat s_{+}$. The lifts can be chosen so that their
tangent vectors at $\hat s_{-}$ are a right-handed triad. 

Let $(\hat E, \hat A)$ be  an equivariant bundle on $\hat S^{3}$,
as above. Let $\hat E_{+}$ and $\hat E_{-}$
be the fibers of $\hat E$ over $\hat s_{\pm}$. Parallel transport
along $\hat{\gamma}_{m}$ gives an isomorphism 
\[
\iota_{m}(\hat A) : \hat E_{-} \to \hat E_{+}.
\]
Since $\hat \gamma_{m}$ is fixed by $\hat I_{m}$, the isomorphism
$\iota_{m}(\hat A)$ commutes with the action of $\hat I_{m}$ on $\hat
E_{-}$ and $\hat E_{+}$. Let
\[
\begin{aligned}
    \tau_{-} : \hat E_{-} & \to \C^{2} \\
    \tau_{+} : \hat E_{+} & \to \C^{2} 
\end{aligned}
\]
be $Q_{8}$-equivariant isomorphisms of determinant $1$. Each of these
is unique up to $\pm 1$. The isomorphism
\[
             \tau_{+}\circ \iota_{m}(\hat A) \circ \tau_{-}^{-1} : \C^{2} \to \C^{2}
\]
is then an element of $\SU(2)$ that commutes with $\hat I_{m}$, and
which has an ambiguity of $\pm 1$ resulting from the ambiguity in
$\tau_{\pm}$. The commutant $C(\hat I_{m})$ in $\SU(2)$ is a copy of
the circle group, and can be identified with the standard circle
$S^{1} = \R/  \Z$ by a unique isomorphism
\[
           L : C(\hat I_{m}) \to \R/  \Z
\]
sending $\hat I_{m}$ to $1/4$. To remove the ambiguity of $\pm 1$, we
take the square (written as $2L$ in additive notation), and we define
\[
            h_{m} (\hat A) = 2L ( \tau_{+}\circ  \iota_{m}(\hat A)\circ  \tau_{-}^{-1}),
\] 
to obtain a well-defined map
\begin{equation}\label{eq:h-maps}
           h_{m} : \bonf_{\theta} \to \R/\Z, \qquad m=1,2,3.
\end{equation}

The maps $h_{1}$, $h_{2}$, $h_{3}$ depend on some universal choices
(essentially the labeling of some standard arcs in $\hat S^{3}$), but
we regard these choices as having been made, once and for all.

The maps $h_{m}$ constructed here for the orbifold
$(S^{3}, \theta)$ can be defined in the same way 
for connected sum with $(S^{3}, \theta)$. That is, if we take an
arbitrary bifold $\check Y$ and consider the space of marked
connections $\bonf^{\sharp}(\check Y)$ defined as in
\eqref{eq:bonf-sharp}, then we have maps
\begin{equation}\label{eq:h-maps-Y}
           h_{m} : \bonf^{\sharp}(\check Y) \to \R/\Z, \qquad m=1,2,3,
\end{equation}
defined in just the same way, using $\theta$.

\subsection{The local system}

Given a topological space $B$ and a continuous map
\[
         h : B \to \R / \Z.
\]
we can construct a local system $\Gamma$
of rank-1 $R$-modules $\Gamma$ on $B$, where $R$ is
the ring $\F[\Z]=\F[T,T^{-1}]$ of finite Laurent series. The local
system we want is pulled back from the universal example on $\R/\Z$. We do this
concretely by regarding $R$ as contained in the larger ring $\F[\R]$,
and for each $b\in B$ we set
\[
            \Gamma_{b} = T^{\tilde h(b)} R
\]
where $\tilde h(b)$ is any lift of $h(b)$ to $\R$. If $\zeta$ is a
path from $a$ to $b$, then
\[
          \Gamma_{\zeta} : \Gamma_{a} \to \Gamma_{b}
\]
is multiplication by $T^{\tilde h(b) - \tilde h(a)}$, where $\tilde h$
is any continuous lift of $h$ along the path.
Given a collection of maps $h_{1}, \dots, h_{n}$ from $B$ to
$\R/\Z$ we can similarly construct a local system of rank-1
$R$-modules, where $R$ is now the ring of finite Laurent series in $n$
variables $T_{1}, \dots, T_{n}$. 

We apply this now to these three circle-valued maps $h_{1}$, $h_{2}$,
$h_{3}$, defined above at \eqref{eq:h-maps-Y}, to obtain a local system
$\Gamma$ on $\bonf^{\sharp}(\check Y)$ for any bifold $\check Y$. This
is a local system of free $R$-modules of rank $1$, where
$R = \F[\Z^{3}]$ is the ring of finite Laurent series in variables
$T_{1}, T_{2}, T_{3}$. (Local systems of this sort were constructed in
\cite{KM-singular} and \cite{KM-s-invariant}, using maps of a similar
sort, but see the remarks at the end of this section for a brief discussion.)

We can now construct the instanton homology groups $\Jsharp(\check Y ;
\Gamma)$ with coefficients in the local system $\Gamma$ in the usual
way. That is, after equipping $\check Y$ with an orbifold Riemannian
metric and perturbing the Chern-Simons functional on
$\bonf^{\sharp}(\check Y)$ to achieve
Morse-Smale transversality, we define a differential $R$-module
\begin{equation}\label{eq:Csharp}
                    C^{\sharp}(\check Y ; \Gamma) = \bigoplus_{\beta} \Gamma_{\beta}
\end{equation}
where the sum is over critical points (a finite set), with boundary
map
\begin{equation}\label{eq:partial-def}
       \partial_{\,\Gamma} = \sum_{(\alpha,\beta,\zeta)}  \Gamma_{\zeta}
\end{equation}
where the sum is over pairs of critical points $(\alpha,\beta)$ and
gradient trajectories $\zeta$ of index $1$. The finitely-generated $R$-module
$\Jsharp(\check Y; \Gamma)$ is defined as the homology of this
complex. We summarize the definition:

\begin{definition}\label{def:deformed-jsharp}
    For a closed oriented bifold $\check Y$, we define
    $\bonf^{\sharp}(\check Y)$ as the space of marked $\SO(3)$
    connections on the marked bifold
    $(\check Y \# (S^{3}, \theta) \, ; \,\mu_{\theta})$. Here the
    marking region of $\mu_{\theta}$ is an open ball $B^{3}$
    containing the graph $\theta\subset S^{3}$, with the trivial
    bundle. We write $R$ for the ring $\F[\Z^{3}]$ of finite Laurent
    series in variables $T_{1}$, $T_{2}$, $T_{3}$, and we define
    $\Gamma$ as the local system of $R$-modules on
    $\bonf^{\sharp}(\check Y)$ constructed from the three
    circle-valued functions \eqref{eq:h-maps-Y}. We define $J^{\sharp}(\check Y; \Gamma)$
    as the Floer homology with these local coefficients on
    $\bonf^{\sharp}(\check Y)$. \qed
\end{definition}

As usual, given a web $K\subset Y$, where $Y$ is a 3-manifold with
framed basepoint, we write
$\Jsharp(Y, K ; \Gamma)$ for the case that $\check Y = (Y, K)$. If
$Y=S^{3}$ and the framed basepoint is at infinity, we simply write
$\Jsharp(K; \Gamma)$.

We can give a concrete interpretation of the maps $\Gamma_{\zeta}$ for
a path $\zeta$ from $\alpha$ to $\beta$ in $\bonf^{\sharp}(\check Y)$,
which will be helpful later. First, our definition means that
\begin{equation}\label{eq:Gamma-explained}
          \Gamma_{\zeta} = T_{1}^{\Delta \tilde h_{1}} 
                  T_{2}^{\Delta \tilde h_{2}}
          T_{3}^{\Delta\tilde h_{3}}
\end{equation} 
where $\Delta \tilde h_{m}$ is the change in a continuous lift $\tilde
h_{m}$ of $h_{m}$ along the path $\zeta$. Since $h_{m}$ is defined in
terms of the
holonomy of an $S^{1}$ connection, the change in $h_{m}$ along a path
can be expressed as the integral of the curvature of an $S^{1}$
bundle. If $\gamma_{m}$ ($m=1,2,3$) are again the three arcs of
$\theta$ and $\hat \gamma_{m}$ their chosen lifts to $\hat S^{3}$. The
path $\zeta$ gives rise to an $\SO(3)$ connection on $\R\times \hat
S^{3}$ which we may restrict to $\R\times \hat \gamma_{m}$, where its
structure group reduces to $C(I_{m})_{1}$ (the identity component of
the commutant of $I_{m}$ in $\SO(3)$). The latter is a circle group,
so there are two possible isomorphisms
\[
         C(I_{m})_{1} \to S^{1} \subset \C.
\]
The marking gives us a preferred lift $\hat I_{m}$ in $\SU(2)$ and
picks out a preferred isomorphism of $C(I_{m})_{1}$ with $S^{1}$. Via
this isomorphism, our bundle on $\R\times \hat\gamma_{m}$ becomes an
$S^{1}$ bundle $K$. The change in the holonomy, $\Delta \tilde h_{m}$, 
is then then Chern-Weil
integral:
\begin{equation}\label{eq:Chern-Weil-h}
            \Delta \tilde h_{m} = \frac{i}{2\pi} 
           \int_{\R\times \hat \gamma_{m}} F_{K}.
\end{equation}

\begin{remarks}
     As mentioned above, instanton Floer homology with local
     coefficients was applied previously to knots and links in the
     authors' earlier papers \cite{KM-singular} and
     \cite{KM-s-invariant}. The local systems used there (also denoted
     by the generic letter $\Gamma$) were  defined in a similar
     manner, but the circle-valued functions $h$ that were used were
     obtained from holonomies along the components of the knot or link
     $K$ itself. By contrast, the circle-valued functions in the
     present paper are defined using the holonomy along the edges of
     the auxiliary graph $\theta$.
\end{remarks}

\subsection{Functoriality and basic properties}

The functorial properties of $\Jsharp(Y, K)$ carry over to the case of
local coefficients without essential change. As in \cite{KM-jsharp}
(see Definition~3.10),  we define a category
$\mathcal{C}^{\sharp}$ whose objects are pairs $(Y,K)$ where $K$ is a
web in a closed, oriented $3$-manifold $Y$ equipped with a framed
basepoint $y_{0} \in Y\setminus K$. The morphisms are isomorphism
classes of triples $(X,\Sigma, \gamma)$, where $X$ is 4-dimensional
cobordism, $\Sigma$ is an embedded foam with ``dots'', and $\gamma$ is
a framed arc joining the basepoints. Then we have a functor
\[
      \Jsharp( \; \text{---} \; ; \Gamma) : \mathcal{C}^{\sharp} \to
      (\text{finitely-generated $R$-modules}) 
\]
where $R=\F[\Z^{3}]$. To the empty web in $\R^{3}\subset S^{3}$, for example, this functor
assigns the free rank-1 module $R$; and to a closed foam in $\R^{4}$
it assigns a module map $R\to R$, i.e. an element of $R$ itself.

We have the usual applications of excision. So for example, as with
\cite[Corollary 4.4]{KM-jsharp}, we have the following multiplicative
property.

\begin{proposition}
    \label{prop:tensor-product}
     Suppose $K=K_{1}\cup K_{2}$ is a split web in
     $\R^{3}$, meaning that there is an embedded $2$-sphere $S$ which
     separates $K_{1}$ from $K_{2}$. Suppose that at least one of
     $\Jsharp(K_{i};\Gamma)$ is a free $R$-module. Then there is an isomorphism,
\[
         \Jsharp(K;\Gamma) = \Jsharp(K_{1};\Gamma) \otimes_{R} \Jsharp(K_{2};\Gamma).
\]
    Moreover, if $\Sigma\subset [0,1]\times S^{3}$ is a split cobordism, meaning that
    $\Sigma=\Sigma_{1}\cup \Sigma_{2}$ and $\Sigma$ is disjoint from
    $[0,1]\times S$, then
\[
         \Jsharp(\Sigma) = \Jsharp(\Sigma_{1})\otimes \Jsharp(\Sigma_{2}).
\] 
    In general, if neither is free, then $\Jsharp(K;\Gamma)$ is
    related to $ \Jsharp(K_{1};\Gamma)$ and $\Jsharp(K_{2};\Gamma)$ by
    a K\"unneth theorem (a spectral sequence).\qed
\end{proposition}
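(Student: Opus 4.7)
The plan is to combine an excision argument, in the spirit of Propositions~4.1 and 4.3 of \cite{KM-jsharp} and the identification of $J^{\dag}$ with $\Jsharp$ above, with a K\"unneth formula for complexes of $R$-modules. First I would position the auxiliary $(S^{3},\theta)$ used in the construction of $\Jsharp(K;\Gamma)$ so that the connected sum is taken at a basepoint in the component of $\R^{3}\setminus S$ lying between $K_{1}$ and $K_{2}$. With this choice the $\theta$-graph sits ``between'' the two pieces, and the connected sum isomorphism \eqref{eq:Theta-connected-sum} applies; at the chain level it is an identification of moduli spaces and therefore produces an isomorphism of ordinary Floer complexes $C^{\sharp}(K_{1}\cup K_{2})\cong C^{\sharp}(K_{1})\otimes_{\F} C^{\sharp}(K_{2})$.

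The essential new content is to lift this to a chain isomorphism of complexes of $R$-modules by checking compatibility with the local system $\Gamma$. Under the excision, each of the three arcs $\hat\gamma_{m}$ of the central $\theta$ is split by the cutting sphere into two sub-arcs, one appearing in each of the two resulting $\theta$-graphs on the right. The Chern--Weil expression \eqref{eq:Chern-Weil-h} is additive over such a decomposition, so the holonomy change $\Delta\tilde h_{m}$ along a path $\zeta$ on the left equals the sum $\Delta\tilde h_{m}^{(1)}+\Delta\tilde h_{m}^{(2)}$ of the changes on the two sides. Hence the local system element $\Gamma_{\zeta}=\prod_{m}T_{m}^{\Delta\tilde h_{m}}$ on the left factors as the product of the corresponding elements on the two sides, matching the tensor product structure on the right and giving a chain isomorphism
\[
    C^{\sharp}(K_{1}\cup K_{2};\Gamma)\;\cong\;C^{\sharp}(K_{1};\Gamma)\otimes_{R}C^{\sharp}(K_{2};\Gamma).
\]

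Taking homology and invoking the K\"unneth theorem for complexes of $R$-modules then finishes the proof: with the freeness hypothesis the K\"unneth spectral sequence collapses at $E_{2}$, yielding the claimed tensor product formula, while in general one obtains the K\"unneth spectral sequence referred to at the end of the statement. The split cobordism case follows by tracking the induced cobordism maps through the same chain-level identification. The main technical obstacle I anticipate is the compatibility check itself: one must arrange that the gluing of connections in the excision respects the marking data $\mu_{\theta}$, the lifts $\hat I_{m}$, and the trivializations $\tau_{\pm}$ used to define $h_{m}$, so that the Chern--Weil splitting above is exact rather than only true up to a global correction. Once these choices are pinned down compatibly, the remainder of the argument is formal.
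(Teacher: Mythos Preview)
The paper gives no proof beyond the \qed and the pointer to \cite[Corollary~4.4]{KM-jsharp}, so you are filling in what the authors leave implicit. Your strategy---excision plus a compatibility check for $\Gamma$, followed by the algebraic K\"unneth theorem---is exactly the intended one, and your observation that the Chern--Weil integral \eqref{eq:Chern-Weil-h} is additive under the splitting is the right reason the local system behaves well.

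One small caution: the excision isomorphisms of \cite{KM-jsharp} are induced by cobordism maps and are a priori isomorphisms on Floer \emph{homology}, not literal chain-level identifications of moduli spaces as you describe. The cleanest way to obtain the chain-level tensor product you need is to route through a genuine \emph{disjoint} union of the two marked bifolds, each carrying its own copy of $\theta$, with the local system defined via the sum of the two families of maps $h_{m}$ (exactly as is done later in the proof of Proposition~\ref{prop:d4-ss}). For a disjoint union the configuration space is literally a product, the Morse complex is honestly $C^{\sharp}(K_{1};\Gamma)\otimes_{R}C^{\sharp}(K_{2};\Gamma)$, and the factoring of $\Gamma_{\zeta}$ is immediate. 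Excision then transports the resulting K\"unneth statement back to the split web at the level of homology, and naturality of excision handles the split-cobordism assertion. With that small rerouting your outline is complete.
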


The other general properties that we wish to restate in the
local-coefficient case are the exact triangles from
\cite{KM-jsharp-triangles}. The setup here is that we have six webs
(in $\R^{3}$ or in a general $3$-manifold) which are the same outside
a ball and which differ inside the ball as shown in
Figure~\ref{fig:skein}.
\begin{figure}
    \begin{center}
        \includegraphics[scale=.5]{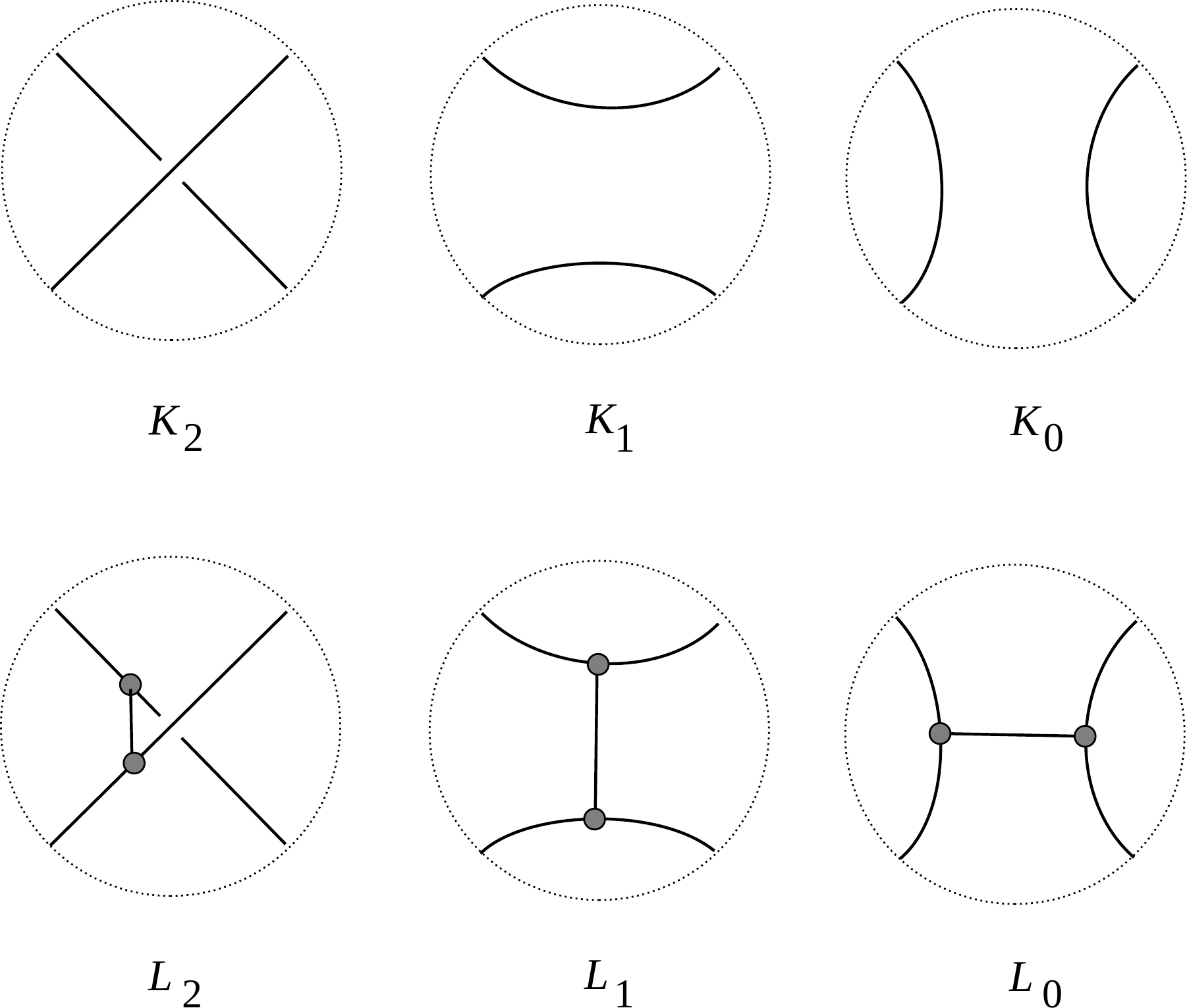}
    \end{center}
    \caption{\label{fig:skein}
    Six webs in $Y$ differing inside a ball.}
\end{figure}
There are standard elementary foam-cobordisms from $K_{i+1}$ and $L_{i+1}$ to $K_{i}$ and
to $L_{i}$.

\begin{proposition}
    \label{prop:exact-triangle}
The sequence of $R$-modules obtained by applying the functor $\Jsharp(\;
\text{---}\; ; \Gamma)$ to the 3-periodic sequence
\begin{equation}\label{eq:LLL-exact}
        \dots \to L_{2} \to L_{1} \to  L_{0} \to L_{2} \to \cdots      
\end{equation}
is exact. So too is the sequence obtained from
\begin{equation}\label{eq:LKK-exact}
       \dots \to L_{2} \to K_{1} \to  K_{0} \to L_{2} \to \cdots 
\end{equation}
as well as the two other sequence obtained by cyclically permuting the
indices (though there is no essential difference).
\end{proposition}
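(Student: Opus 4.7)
The plan is to reduce Proposition~\ref{prop:exact-triangle} to the corresponding exact triangles for $\Jsharp$ without local coefficients, established in \cite{KM-jsharp-triangles}, by verifying that the entire argument there is compatible with the local system $\Gamma$. I would carry this out in three steps.

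First, I would construct the chain maps on the complex $C^{\sharp}(-;\Gamma)$ of \eqref{eq:Csharp} induced by each of the elementary foam cobordisms in Figure~\ref{fig:skein}. These cobordisms are supported in $[0,1]\times B^{3}$ and agree with the identity outside. On generators the induced map counts instantons on the cobordism weighted by $\Gamma_{\zeta}$ in the sense of \eqref{eq:Gamma-explained}, exactly as in the definition of $\partial_{\,\Gamma}$ at \eqref{eq:partial-def}; the standard compactness and gluing arguments then show these are chain maps with respect to $\partial_{\,\Gamma}$.

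Second, I would rerun the proof of the triangle from \cite{KM-jsharp-triangles}, replacing every mod-$2$ count of moduli-space points by its $R$-valued weighted analogue. That proof has three ingredients: a chain-null-homotopy of the composition of two consecutive elementary cobordism maps, built from a parametrized moduli space over a composite cobordism; an excision or neck-stretching reduction to a model computation; and the model computation itself on a small web in $S^{3}$. Each step admits a refinement with coefficients in $\Gamma$ in which the usual counts are promoted to their weighted versions with monomials in $T_{1},T_{2},T_{3}$ determined by \eqref{eq:Chern-Weil-h}.

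The main obstacle I expect is the bookkeeping of these weights across the gluing and excision arguments. The key observation that makes it go through is that the circle-valued maps $h_{m}$ are defined by holonomies along the arcs $\hat\gamma_{m}$ of the auxiliary theta graph inside the $(S^{3},\theta)$ summand, which is disjoint both from the skein ball where the cobordisms of Figure~\ref{fig:skein} live and from the cut regions used for excision in \cite{KM-jsharp-triangles}. The Chern-Weil integrals \eqref{eq:Chern-Weil-h} are therefore additive under concatenation of paths in $\bonf^{\sharp}$ and unaffected by the neck-stretching performed there, so every intermediate algebraic identity established over $\F$ upgrades to the identical identity over $R$ with matching monomial weights. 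Granting this compatibility, the exactness of \eqref{eq:LLL-exact} and \eqref{eq:LKK-exact} over $R$ follows from the $\F$-case without further work, and the cyclic permutations are proved in the same way.
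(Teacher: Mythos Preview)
Your proposal is correct and follows essentially the same approach as the paper: both argue that the triangle proofs from \cite{KM-jsharp-triangles} carry over to the local-coefficient setting because the weights $\Gamma_{\zeta}$ are determined by the auxiliary $\theta$ summand, away from the skein region. One point the paper makes explicit that you should be aware of: for the sequence \eqref{eq:LKK-exact}, the argument actually given in \cite{KM-jsharp-triangles} is a shortcut that deduces exactness from \eqref{eq:LLL-exact} via additional identities in $\Jsharp$, and those identities need not hold verbatim over $R$; the paper therefore notes that one must use the \emph{direct} proof (chain homotopies, stretching, model computation) for \eqref{eq:LKK-exact} as well, which is exactly the method you outline.
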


\begin{proof}
    The proof for the sequence \eqref{eq:LLL-exact} is given in
    \cite{KM-jsharp-triangles}, and carries over essentially without
    change. For the second sequence \eqref{eq:LKK-exact}, a short-cut was taken in
    \cite{KM-jsharp-triangles}: the exactness of \eqref{eq:LKK-exact}
    was deduced from the exactness of  \eqref{eq:LLL-exact}, using additional
    relations satisfied by $\Jsharp$. Nevertheless, as indicated in
    \cite{KM-jsharp-triangles}, one can ignore the short-cut and give
    a direct proof for \eqref{eq:LKK-exact}, along just the same
    lines as  \eqref{eq:LLL-exact}. This direct proof adapts without essential
    change to the case of local coefficients.
\end{proof}

\section{The cubic relation for $u$}

\subsection{Statement of the result}

To each edge $e$ of a web $K \subset Y$, there is an operator
\[
              u_{e} : \Jsharp( Y, K ; \Gamma) \to \Jsharp( Y, K ; \Gamma).
\]
In the language of foams with dots, this is the operator corresponding
to a cylindrical foam $[0,1]\times K$ with a dot on the face
$[0,1]\times e$. With constant coefficients $\F$, the operator
satisfies $u_{e}^{3} = 0$, as shown in \cite{KM-jsharp}. 
But the result with the local system of
coefficients $\Gamma$ is different.

\begin{proposition}\label{prop:u-relation}
    The operator $u_{e}$ satisfies 
\[
             u_{e}^{3} + P u_{e} = 0,
\]
where $P\in R$ is the element \eqref{eq:p}.
\end{proposition}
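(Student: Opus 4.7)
The plan is to reduce the cubic relation to an explicit instanton moduli-space computation in a standard local neighborhood of the edge $e$, and then to read off the polynomial $P$ from the weights of the local system $\Gamma$ along the ends of that moduli space.

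First I would exploit locality. The operator $u_{e}$ is defined by the foam-cobordism $[0,1]\times K$ carrying a single dot on the face $[0,1]\times e$, and any polynomial identity in $u_{e}$ arises at the chain level from a bifold cobordism of dotted foams supported in a tubular neighborhood of this face. By the excision ingredients underlying Section~\ref{subsec:replace}, together with Proposition~\ref{prop:tensor-product}, it suffices to verify the relation in a universal local model in which a neighborhood of $e$ has been replaced by a standard piece of the theta-graph bifold and the rest of $(Y,K)$ acts only as a spectator tensor factor. Working with $\theta$ rather than $H$ in the definition of $\Jsharp$, as agreed in Section~\ref{subsec:replace}, is essential here because the local system $\Gamma$ is built from the three circle-valued maps $h_{1},h_{2},h_{3}$ of \eqref{eq:h-maps-Y}, which in turn are attached to the arcs of a theta-graph.

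Next I would analyze the relevant moduli space. In the undeformed theory of \cite{KM-jsharp} the relation $u_{e}^{3}=0$ is proved by identifying a specific low-dimensional instanton moduli space $M$ in the local model and showing that three dot-evaluations produce a trivial element. With local coefficients, the same moduli space controls $u_{e}^{3}$, but each end of $M$ is now decorated by the monodromy of $\Gamma$ as expressed by \eqref{eq:Gamma-explained}. After a generic perturbation, the relevant reducible ends fall into four strata, one for each compatible equivariant lift of the parallel transport along the three arcs $\hat\gamma_{1},\hat\gamma_{2},\hat\gamma_{3}$ in $\hat S^{3}$. Each stratum contributes a single monomial $T_{1}^{\epsilon_{1}}T_{2}^{\epsilon_{2}}T_{3}^{\epsilon_{3}}$ computed by the Chern--Weil integrals \eqref{eq:Chern-Weil-h} along the three lifted arcs; the constraint $\epsilon_{1}\epsilon_{2}\epsilon_{3}=+1$, which is precisely the pattern exhibited by the four terms of \eqref{eq:p}, reflects the relation $\hat I_{1}\hat I_{2}\hat I_{3}\in\{\pm 1\}$ in $Q_{8}$. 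Summing the four weighted ends produces the coefficient $P$, and the factorization $u_{e}^{3}+Pu_{e}=u_{e}(u_{e}^{2}+P)$ arises because the leftover dot on the residual cylinder that is not absorbed into the moduli space computation provides exactly the $u_{e}$ factor that multiplies $P$.

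The main obstacle is the geometric analysis of $M$ and its ends: one must pin down the precise local model, show that the moduli space of the associated cobordism is cut out transversally (after a small perturbation) and has exactly four reducible ends of the claimed type, and verify that no additional monomials (such as $T_{i}^{\pm 2}$ or a constant term) can appear. A useful consistency check, which simultaneously motivates the sign pattern in \eqref{eq:p}, is that specializing $T_{1}=T_{2}=T_{3}=1$ collapses $P$ to $1+1+1+1=0$ in $\F$, so the deformed relation correctly specializes to the undeformed identity $u_{e}^{3}=0$ of \cite{KM-jsharp}; moreover the fourfold vanishing of $P$ at $(1,1,1)$ matches the remark in the introduction that $P$ vanishes to order $4$ there.
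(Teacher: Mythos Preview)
Your intuition about the four monomials arising from the four characters of $Q_{8}$ is correct, and the Chern--Weil description \eqref{eq:Chern-Weil-h} is indeed how the exponents are computed. But the organizational framework you propose has a genuine gap. The paper does \emph{not} reduce to a local model near the edge $e$; rather it introduces the Stiefel--Whitney classes $\mathsf{w}_{i,x}$ of the basepoint bundle $\mathbb{E}_{x}$ at a point $x$ in the \emph{smooth} part of $\check X$, and proves the universal cohomological identity $\mathsf{u}_{s}^{3}+\mathsf{w}_{1,x}\mathsf{u}_{s}^{2}+\mathsf{w}_{2,x}\mathsf{u}_{s}+\mathsf{w}_{3,x}=0$ (this is just the vanishing of $w_{3}$ of the complementary rank-$2$ bundle). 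This identity is then promoted to an operator relation by a standard chain-homotopy argument (Lemma~\ref{lem:proto-u-relation}), after which the entire content of the proposition becomes the evaluation of the three operators $w_{i,x}$. The reduction by excision is to the case $\check Y=S^{3}$ (the \emph{empty} web), because the $w_{i,x}$ depend only on the basepoint fibration, not on $e$; the theta graph you see in the computation is the auxiliary $\theta$ in the definition of $\Jsharp$, not a local model of the edge.

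Your sketch also lacks a mechanism for killing the constant term. In the paper, $w_{1,x}$ and $w_{3,x}$ vanish for a $\Z/2$-grading reason on $\Jsharp(\emptyset;\Gamma)\cong R$ (they have odd degree), which is what forces the relation to have the shape $u_{e}^{3}+Pu_{e}$ rather than $u_{e}^{3}+Pu_{e}+Q$. Your explanation that the $u_{e}$ factor on $P$ comes from a ``leftover dot on a residual cylinder'' is not correct: $P$ appears as the coefficient of $u_{e}$ simply because $P=w_{2,x}$ is the middle coefficient of the universal cubic. Finally, the actual moduli-space computation (Lemma~\ref{lem:w2-nonzero} and Lemma~\ref{lem:instanton-degree}) is not about ``reducible ends'': one identifies $M_{2}(\alpha,\alpha)$ for the unique flat connection $\alpha$ on $(S^{3},\theta)$ with the $Q_{8}$-equivariant $1$-instantons on $\hat S^{4}$, which form four open $2$-disks (one per character), and shows that a dual representative of $w_{2}(\mathbb{E}_{x})$ meets each disk in an odd number of points, contributing one monomial of $P$ apiece.
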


This result is of a very similar form to the $N=3$ case of a general
result for $\SU(N)$ gauge theory, treated in \cite{Xie}. Our argument
follows \cite{Xie} quite closely, though in the end the argument here is
considerably simpler. The proof has some setup required, which we
present first.

\subsection{Characteristic classes of the base-point bundles}

Let $\check X$ be a connected 4-dimensional bifold cobordism from $\check Y'$
to $\check Y$. Let $\nu$ be marking data on $\check X$ (possibly
empty) and let $\bonf^{*}(\check X;\nu)$ denote the space of marked bifold
connections which are fully irreducible (i.e. have trivial
automorphism group). Depending on the bundle and the marking, this may
be all of $\bonf(\check X ;\nu)$. Write $\check X^{o}$ for the non-singular
part of the orbifold. Then there is a universal $\SO(3)$ bundle 
\[ \mathbb{E}
\to \check X^{o} \times \bonf^{*}(\check X;\nu).
\]
If we pick a basepoint $x\in \check X^{o}$, then we obtain a bundle
$\mathbb{E}_{x}\to \bonf^{*}(\check X;\nu)$ and characteristic classes
\[
            \mathsf{w}_{i,x} = w_{i}(\mathbb{E}_{x}) \in H^{i}(\bonf^{*}(\check X;\nu); \F).
\]

If $s$ belongs to a 2-dimensional face of the singular set of
$\check{X}$, then we can pass to a smooth $\Z/2$ cover of a chart
around $s$ and obtain a bundle
\[
        \tilde {\mathbb{E}}_{s} \to \bonf^{*}(\check X;\nu)
\]
carrying an involution on the fibers. The $+1$ and $-1$ eigenspaces of the
involution are respectively a line bundle
\begin{equation}\label{eq:bpoint-line}
      \mathbb{L}_{s} \to \bonf^{*}(\check X;\nu)
\end{equation}
and a rank-$2$ bundle
\begin{equation}\label{eq:bpoint-W}
            \mathbb{W}_{s} \to \bonf^{*}(\check X;\nu).
\end{equation}
Although phrased differently, the real line bundle $ \mathbb{L}_{s}$
is the same one that is used
to define the operators $u$ in \cite{KM-jsharp}, and we write
\[
           \mathsf{u}_{s} = w_{1}(\mathbb{L}_{s}) \in
           H^{1}(\bonf^{*}(\check X;\nu)).
\]

\begin{lemma}\label{lem:u-relation-H}
    We have the relation
\[
              \mathsf{u}_{s}^{3} + \mathsf{w}_{1,x} \mathsf{u}_{s}^{2}
 + \mathsf{w}_{2,x} \mathsf{u}_{s} + \mathsf{w}_{3,x} = 0.
\]
\end{lemma}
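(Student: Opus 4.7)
The plan is to recognize that, as real rank-$3$ vector bundles over $\bonf^{*}(\check X;\nu)$, the basepoint bundle $\tilde{\mathbb{E}}_{s}$ coincides with $\mathbb{E}_{x}$, and then to read off the cubic relation by expanding the Whitney product formula applied to the involution splitting $\tilde{\mathbb{E}}_{s} = \mathbb{L}_{s} \oplus \mathbb{W}_{s}$.

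First I would set up the identification of the two basepoint bundles. The universal $\SO(3)$ bundle $\mathbb{E}$ lives over $\check X^{o} \times \bonf^{*}(\check X;\nu)$, and its restriction to $\{x\} \times \bonf^{*}$ is $\mathbb{E}_{x}$. For $s$ on a $2$-dimensional face, the smooth double cover $\tilde U \to U$ of a chart $U$ around $s$ is a manifold, the pullback of $\mathbb{E}|_{U^{o}\times\bonf^{*}}$ to $\tilde U^{o}\times\bonf^{*}$ extends across the smooth preimage $\tilde s$ of $s$, and its fiber at $\tilde s$ is by definition $\tilde{\mathbb{E}}_{s}$. Because $\tilde U$ is path-connected and contains both $\tilde s$ and a preimage $\tilde x$ of a smooth basepoint $x \in U^{o}$, a path from $\tilde s$ to $\tilde x$ produces an isomorphism of real vector bundles $\tilde{\mathbb{E}}_{s} \cong \mathbb{E}_{x}$ over $\bonf^{*}(\check X;\nu)$. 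Consequently $w_{i}(\tilde{\mathbb{E}}_{s}) = \mathsf{w}_{i,x}$ for $i=1,2,3$.

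Second, since $\tilde{\mathbb{E}}_{s} = \mathbb{L}_{s} \oplus \mathbb{W}_{s}$ as real vector bundles over $\bonf^{*}$, the Whitney product formula together with the identification above gives
\[
  (1+\mathsf{u}_{s})\bigl(1 + w_{1}(\mathbb{W}_{s}) + w_{2}(\mathbb{W}_{s})\bigr) = 1 + \mathsf{w}_{1,x} + \mathsf{w}_{2,x} + \mathsf{w}_{3,x}.
\]
Comparing the three positive degrees in $\F$-coefficients yields
\[
  w_{1}(\mathbb{W}_{s}) = \mathsf{w}_{1,x} + \mathsf{u}_{s}, \qquad w_{2}(\mathbb{W}_{s}) = \mathsf{w}_{2,x} + \mathsf{u}_{s}\,w_{1}(\mathbb{W}_{s}), \qquad \mathsf{u}_{s}\,w_{2}(\mathbb{W}_{s}) = \mathsf{w}_{3,x}.
\]
Substituting the first equation into the second, and the result into the third, eliminates both Stiefel--Whitney classes of $\mathbb{W}_{s}$ and produces exactly the stated relation $\mathsf{u}_{s}^{3} + \mathsf{w}_{1,x}\mathsf{u}_{s}^{2} + \mathsf{w}_{2,x}\mathsf{u}_{s} + \mathsf{w}_{3,x} = 0$. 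More conceptually, $\mathsf{u}_{s}$ is one of the three ``Stiefel--Whitney roots'' of the rank-$3$ bundle $\tilde{\mathbb{E}}_{s}$, hence it satisfies the characteristic polynomial whose elementary symmetric functions are the total Stiefel--Whitney class of $\mathbb{E}_{x}$.

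The only step that requires any care is the identification in the first paragraph, because the line bundle $\mathbb{L}_{s}$ and the rank-$2$ bundle $\mathbb{W}_{s}$ are defined on the branched cover chart while $\mathbb{E}_{x}$ is defined using a smooth basepoint; once the cover argument pins down a canonical isomorphism class, the rest of the argument is purely formal manipulation with the Whitney product formula.
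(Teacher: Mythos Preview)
Your proof is correct and is essentially the paper's own argument, carried out in more detail. The paper identifies $\mathsf{w}_{i,x}$ with $w_{i}(\tilde{\mathbb{E}}_{s})$ via the double cover chart (your first paragraph), and then phrases the Whitney-sum computation as ``the complementary rank-$2$ bundle has $w_{3}=0$'', which is exactly your degree-$3$ equation $\mathsf{u}_{s}\,w_{2}(\mathbb{W}_{s})=\mathsf{w}_{3,x}$ combined with the degree-$1$ and degree-$2$ identities.
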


\remark {Notwithstanding the notation in the lemma, the class
  $\mathsf{w}_{1,x}$ is zero, because our bundles have structure group
  $\SO(3)$ rather than $O(3)$.}

\begin{proof}[Proof of the lemma.]
  Although our exposition asked that $x$ be in the smooth part, we may
  use the $\Z/2$ cover of a coordinate chart at $s$ in the singular
  set of $\check X$, and we can therefore compute the classes
  $\mathsf{w}_{i}$ as
\[
            \mathsf{w}_{i,x} = w_{i} (\tilde{ \mathbb{E}}_{s}),
\]  
 where $\tilde{\mathbb{E}}_{s}$ is the bundle which has
 $\mathbb{L}_{s}$ as a subbundle. The relation in the lemma is a
 universal relation for a rank-3 bundle containing a line subbundle:
 it expresses the fact that the complementary rank-2 bundle has $w_{3}=0$.
\end{proof}

\subsection{Properties of the point-class operators}
\label{sec:op-props}

Next we parlay the relation from Lemma~\ref{lem:u-relation-H}  into a
relation among operators on the instanton Floer homology groups. We
recall how certain cohomology classes in the space of connections give
rise to operators, following constructions that go back to
\cite{Donaldson-polynomials}.

To put ourselves in the framework for $\Jsharp(\check Y; \Gamma)$, 
we take $\check X$ to be the product
cobordism from $\check Y \csum (S^{3},\theta)$ to itself, and we write $\check X^{+}$ for the
infinite cylinder. We take the framed arc $\gamma$ in $\check X$
joining the basepoints to be $[0,1]\times y_{0}$, and the strong
marking data $\nu$
also to be a product $[0,1]\times \mu_{\theta}$. We suppose that a
holonomy perturbation of the Chern-Simons functional has been chosen
so that the critical points are non-degenerate and the moduli spaces
$M(\alpha, \beta)$ of marked solutions on $\check X^{+}$ are cut out
transversely.

Now let $Z\subset \check X$ be compact 4-dimensional sub-bifold with
boundary. By its intersection with $\nu$, this bifold inherits marking
data, and we suppose that the restriction map
\[
          H^{1}(U_{\nu}; \F) \to   H^{1}(U_{\nu}\cap Z; \F)
\]
is injective. This is sufficient to ensure that if $(E,A)$ is a
solution in any of the moduli spaces $M(\alpha, \beta)$, then the
restriction of $(E,A)$ to $Z$ has trivial automorphism group. We
therefore have restriction maps
\begin{equation}\label{eq:M-restriction}
                  M(\alpha, \beta) \to \bonf^{*}(Z; \nu \cap Z).
\end{equation}
Let $\mathsf{v}\in H^{d}(  \bonf^{*}(Z; \nu \cap Z)\; F)$ be a cohomology
class represented as the dual of a codimension-$d$ subvariety $V$,
which we take to be subset stratified by Banach submanifolds with a smooth
open stratum of codimension $d$ and other strata of codimension $d+2$
or more. We suppose these are chosen so that all strata are transverse
to the maps \eqref{eq:M-restriction}.

The intersections $M_{d}(\alpha, \beta) \cap V$ are compact by
Uhlenbeck's theorem provided that either:
\begin{enumerate}
\item $d\le 7$ and $Z$ does not meet the singular set of the orbifold; or
\item $d\le 3$ and $Z$ meets the singular set of $\check X$ only in
    the strata with $\Z/2$ stabilizer.
\end{enumerate}

If $d \le 6$ or $d \le 2$ respectively in the above cases, then we
obtain a chain map by summing over the points in these compact
intersections. (The proof of the chain property involves moduli spaces
of dimension $d+1$, which is the reason for the stronger restriction
on $d$.) The chain map on $C^{\sharp}(\check Y; \Gamma)$ is
defined by
\[
       \sum_{\alpha,\beta}\;  \sum_{\zeta \in M_{d}(\alpha, \beta)\cap
       V}
       \Gamma_{\zeta},
\]
and it gives rise to an operator
\[
          v : \Jsharp(\check Y; \Gamma) \to \Jsharp(\check Y; \Gamma).
\]

In this way, the cohomology class $\mathsf{u}_{s}$, for $s$ a
basepoint in a face $[0,1]\times e$ of the singular set, gives rise to
the operator
\[
         u_{e} :  \Jsharp(\check Y; \Gamma) \to \Jsharp(\check Y; \Gamma),
\]
and the classes $\mathsf{w}_{i,x}$ give rise to operators
\[
          w_{i,x} :  \Jsharp(\check Y; \Gamma) \to \Jsharp(\check Y; \Gamma)
\]
for $i=1,2,3$. 

\begin{lemma}\label{lem:proto-u-relation}
    The above operators satisfy the relation
\begin{equation}\label{eq:proto-u-relation}
             u_{e}^{3} + w_{1,x} u_{e}^{2} + w_{2,x} u_{e} + w_{3,x} = 0.
\end{equation}
\end{lemma}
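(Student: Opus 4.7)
The plan is to lift the universal cohomological identity of Lemma~\ref{lem:u-relation-H} to an identity among chain-level operators on the Floer complex $C^{\sharp}(\check Y;\Gamma)$. For an appropriately chosen compact sub-bifold $Z\subset\check X=[0,1]\times(\check Y \csum (S^{3},\theta))$ containing the marked arc $\gamma$, the basepoint $x$, and a portion of the face $[0,1]\times e$, each of the four terms in \eqref{eq:proto-u-relation} is a cohomology class of total degree $3$ on $\bonf^{*}(Z;\nu\cap Z)$, and cutting down moduli spaces $M(\alpha,\beta)$ by geometric representatives of these classes produces chain-level operators on $C^{\sharp}(\check Y;\Gamma)$, with the local-coefficient weights $\Gamma_{\zeta}$ built in as in \eqref{eq:partial-def}.

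First I would realize each monomial in \eqref{eq:proto-u-relation} as a chain map arising from a single geometric cycle, rather than as an honest composition. For $u_{e}^{3}$, I would place three auxiliary basepoints $s_{1},s_{2},s_{3}$ on the face $[0,1]\times e$ at distinct levels in the cylinder direction, pick codimension-$1$ subvarieties $V_{i}\subset\bonf^{*}$ representing $\mathsf{u}_{s_{i}}$, and make them mutually transverse; the transverse intersection $V_{1}\cap V_{2}\cap V_{3}$ then represents $\mathsf{u}_{s}^{3}$, and a standard neck-stretching argument along slices separating the three basepoints identifies the associated chain map, up to chain homotopy, with the threefold composition $u_{e}\circ u_{e}\circ u_{e}$. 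The mixed monomials $w_{i,x}\circ u_{e}^{3-i}$ are handled in the same way; for the $w_{i,x}$-factor one may take the basepoint $x$ in the non-singular part, so that case (1) of Section~\ref{sec:op-props} applies and no dimensional difficulty arises for that portion of $Z$.

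Second, by Lemma~\ref{lem:u-relation-H} the class
\[
\mathsf{c}\;=\;\mathsf{u}_{s}^{3}+\mathsf{w}_{1,x}\mathsf{u}_{s}^{2}+\mathsf{w}_{2,x}\mathsf{u}_{s}+\mathsf{w}_{3,x}
\]
vanishes in $H^{3}(\bonf^{*}(Z;\nu\cap Z);\F)$, so one can exhibit a stratified codimension-$2$ chain $W$ whose stratified boundary is the formal sum of the four representatives just chosen. Cutting down the $3$-dimensional moduli spaces $M_{3}(\alpha,\beta)$ by $W$, weighting by $\Gamma_{\zeta}$, and summing produces a map $K:C^{\sharp}\to C^{\sharp}$. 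The standard analysis of the ends of $M_{3}(\alpha,\beta)\cap W$ — an end coming from the stratified boundary $\partial W$, and an end coming from broken trajectories — gives the chain-homotopy formula $\partial_{\,\Gamma}K+K\partial_{\,\Gamma}=u_{e}^{3}+w_{1,x}u_{e}^{2}+w_{2,x}u_{e}+w_{3,x}$, so that the right-hand side vanishes on $\Jsharp(\check Y;\Gamma)$.

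The main obstacle, as is typical for this kind of Donaldson--Floer argument, is compactness. Because $\mathsf{u}_{s}$ is built from a basepoint on the singular set, the part of $Z$ responsible for the $\mathsf{u}_{s}$-factors lies in case (2) of Section~\ref{sec:op-props}, where the allowable dimensions are restricted rather severely. One must choose $Z$, the representatives $V_{(\cdot)}$, and the bounding chain $W$ so that $Z$ meets the singular set only in $\Z/2$-stabilizer faces (in particular avoiding the trivalent vertices of $K$), so that the compactness statements of Section~\ref{sec:op-props} apply in the range of dimensions needed to make both the chain maps and the chain homotopy well-defined, and so that every stratum is transverse to the restriction maps \eqref{eq:M-restriction}. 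Once these geometric arrangements are in place, the remainder is the standard passage from a bounding cochain in configuration space to a chain homotopy on the Floer complex.
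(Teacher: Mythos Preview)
Your approach is essentially the same as the paper's: realize each monomial by cutting down moduli spaces with disjoint geometric representatives, use Lemma~\ref{lem:u-relation-H} to bound the sum of representatives by a codimension-$2$ chain $W$, and read off a chain homotopy from the ends of the resulting $1$-dimensional moduli spaces, with compactness guaranteed because $Z$ meets the singular set only in the $\Z/2$-strata and no moduli spaces of dimension $\ge 4$ enter. One small indexing slip: the chain homotopy $K$ should be defined by counting the $0$-dimensional intersections $M_{2}(\alpha,\beta)\cap W$, not $M_{3}(\alpha,\beta)\cap W$; the $1$-manifolds $M_{3}(\alpha,\beta)\cap W$ are what you then analyze to obtain the formula $\partial_{\,\Gamma}K+K\partial_{\,\Gamma}=u_{e}^{3}+w_{1,x}u_{e}^{2}+w_{2,x}u_{e}+w_{3,x}$.
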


\begin{proof}
    Let $s_{1}$, $s_{2}$, $s_{3}$ be distinct basepoints on
    $[0,1]\times e$, let $Z_{1}$, $Z_{2}$, $Z_{3}$ be three disjoint
    sub-bifolds of $\check X$ containing these, and let $U_{j}$ be a
    subvariety dual to $\mathsf{u}_{s_{j}}$ in $\bonf^{*}(Z_{j} ;
    \nu\cap Z_{j})$. A standard gluing argument \cite{KM-book}, used to treat
    composite cobordisms in general, shows that the composite operator
    $u_{e}^{3}$ can be computed from the chain map defined by the
    moduli spaces
\[
                M_{3}(\alpha,\beta) \cap U_{1} \cap U_{2} \cap U_{3}.
\]
   Using further disjoint subsets $Z_{4}$, $Z_{5}$, $Z_{6}$ and three
   distinct basepoints in $X^{o}$, we
   construct dual representatives for the classes $\mathsf{w}_{1}$,
   $\mathsf{w}_{2}$,  $\mathsf{w}_{3}$. Let $Z$ be a larger subset of
   $\check X$ that contains all the $Z_{i}$ but still meets the
   singular set of $\check X$ only in the codimension-2 faces. The
   operator on the left-hand side of \eqref{eq:proto-u-relation} is
   then computed from the intersection
\[
             M_{3}(\alpha,\beta)\cap V
\]
  where $V$ has codimension $3$ and is dual to the zero class in
  $\bonf^{*}(Z; \nu)$. We can therefore construct a codimension-2 stratified
  subvariety $W$ with $\partial W = V$. Let $H$ be the operator
  defined on chains by the intersections $M_{2}(\alpha,\beta)\cap W$. 
   At the chain level, we then
  obtain a chain homotopy formula of the shape
\[
  u_{e}^{3} + w_{1,x} u_{e}^{2} + w_{2,x} u_{e} + w_{3,x} = \partial H
  + H \partial
\]
by counting the ends of the 1-dimensional moduli spaces $M_{3}(\alpha,
\beta) \cap W$. Because no moduli spaces of dimension 4 or more are
involved, there is no bubbling, and the ends of the 1-dimensional
moduli space are all of the form $M_{3}(\alpha,\beta)\cap V$ (giving
the left-hand side) or arise from simple broken trajectories (giving
the right-hand side).
\end{proof}

\subsection{Calculating the Stiefel-Whitney operators}

To complete the proof of Proposition~\ref{prop:u-relation} from
Lemma~\ref{lem:proto-u-relation}, need to compute the operators
$w_{i,x}$.

\begin{proposition}\label{prop:w2}
    On $\Jsharp(\check Y;\Gamma)$, the operators $w_{1,x}$ and
    $w_{3,x}$ are both zero, while $w_{2,x}$ is multiplication by the
    element $p\in R$.
\end{proposition}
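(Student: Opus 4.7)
My plan has three main parts, corresponding to the three claims of the proposition.

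First, the vanishing of $w_{1,x}$ is immediate from the remark following Lemma~\ref{lem:u-relation-H}: the universal bundle $\mathbb{E}_x$ has structure group $\SO(3)$, so $w_1(\mathbb{E}_x)=0$ at the level of cohomology classes, and hence $w_{1,x}=0$ as an operator on $\Jsharp(\check Y;\Gamma)$.

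Second, for the computation of $w_2(\mathbb{E}_x) \in H^2(\bonf^{\sharp}(\check Y);\F)$, I would localize the basepoint $x$ near the theta summand of the connected sum: choose $x$ in an open ball around $\theta \subset S^{3}$ that lies inside the marking region and is off the singular set. Passing to the $V_4$-double cover $\hat S^{3}\to S^{3}$, the marked $\SO(3)$-bundle lifts to a $Q_{8}$-equivariant $\SU(2)$-bundle, and the three holonomies $\iota_m(\hat A)$ from Section~\ref{sec:maps-to-S1}, each lying in the circle commutant $C(\hat I_m)$, control the way $\mathbb{E}_x$ twists as the connection varies. An equivariant characteristic-class computation, parallel to the $\SU(N)$ analysis in~\cite{Xie} but considerably simpler in our rank-$3$ setting, should express $w_2(\mathbb{E}_x)$ as a specific $V_4$-symmetric quadratic polynomial in the mod-$2$ reductions of the classes $h_1, h_2, h_3 \in H^1(\bonf^{\sharp};\F)$. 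With such a formula in hand, the vanishing $w_{3,x} = 0$ follows from Wu: since $w_1 = 0$, the Wu relation reads $w_3 = \mathrm{Sq}^{1} w_2$; each $h_m$ is the mod-$2$ reduction of the integer class obtained by pulling back the fundamental class of $\R/\Z = K(\Z,1)$, so $\mathrm{Sq}^{1} h_m = 0$, and by the Leibniz rule $\mathrm{Sq}^{1}$ annihilates every polynomial in the $h_m$.

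Third, to convert the cohomological formula for $w_2$ into the operator identity $w_{2,x} = P$, I would represent $\mathsf{w}_{2,x}$ by a codimension-$2$ subvariety $V\subset \bonf^{\sharp}(Z;\nu\cap Z)$ pulled back from $(\R/\Z)^{3}$ via the map $(h_1,h_2,h_3)$. Because $V$ depends only on abelian holonomy data along the cobordism, the chain-level operator $\sum_{\zeta} \Gamma_\zeta$ indexed by $M_2(\alpha,\beta)\cap V$ is chain-homotopic to a diagonal operator whose value on each generator $\alpha$ is an element of $R$ independent of $\alpha$. Using the Chern--Weil formula~\eqref{eq:Chern-Weil-h} and the weight formula~\eqref{eq:Gamma-explained}, the diagonal value evaluates as a sum of monomials $T_1^{\epsilon_1} T_2^{\epsilon_2} T_3^{\epsilon_3}$ indexed by the four triples $(\epsilon_1,\epsilon_2,\epsilon_3)\in\{\pm 1\}^{3}$ with $\epsilon_1\epsilon_2\epsilon_3 = +1$ (these are exactly in bijection with the four elements of $V_4$ acting on the fibers), which is the element $P$.

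The main obstacle will be the equivariant computation in the second step: explicitly identifying $w_2(\mathbb{E}_x)$ as a polynomial in the $h_m$ and verifying precisely which monomials appear. By contrast, the deduction of $w_{3,x}=0$ from the polynomial form of $w_2$, the naturality/excision reduction of general $\check Y$ to the theta piece, and the translation from cohomology to an element of $R$ via the local-system weights are all relatively formal once the equivariant formula for $w_2$ is established.
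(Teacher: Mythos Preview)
Your plan diverges substantially from the paper's proof, and the step you yourself flag as ``the main obstacle'' is where the real gap lies. You propose to express $w_2(\mathbb{E}_x)$ as a quadratic polynomial in the mod-$2$ reductions of $h_1, h_2, h_3 \in H^1(\bonf^\sharp;\F)$, but you give no mechanism for establishing such a formula; it is not clear a priori that $w_2(\mathbb{E}_x)$ lies in the subring generated by these classes, and even to test candidate formulas you would have to evaluate on explicit $2$-cycles in the configuration space, which leads straight back to the instanton moduli spaces the paper analyzes directly. Your step three, which produces the four monomials of $P$ ``in bijection with the four elements of $V_4$'', is the correct final answer but is asserted rather than derived: without an explicit description of the $2$-dimensional moduli space $M_2(\alpha,\alpha)$ and of $\mathbb{E}_x$ restricted to it, there is nothing that forces exactly four contributions with those particular exponents. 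Your Wu-formula argument for $w_{3,x}=0$ is an attractive idea, but it is contingent on the same unestablished polynomial expression for $w_2$.

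The paper's route is more concrete. It first reduces to $\check Y = S^3$ by an excision argument showing that each $w_{i,x}$ is multiplication by a fixed element of $R$ (compare a basepoint on $\check Y$ with one on an auxiliary $S^3$ summand). In that case $\Jsharp(\emptyset;\Gamma)\cong R$ sits in a single $\Z/2$ grading, so the odd-degree operators $w_{1,x}$ and $w_{3,x}$ vanish for free---no Wu formula and no formula for $w_2$ is needed. For $w_{2,x}$ the paper then identifies $M_2(\alpha,\alpha)$ explicitly: pulled back to the round $\hat S^4$, it is the $V_4$-fixed locus of the $1$-instanton moduli space (an open $2$-disk), and the four lifts of the $V_4$ action to $Q_8$ give four such disks. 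Two short lemmas then show that $w_2(\mathbb{E}_x)$ pairs nontrivially with each disk's compactification and compute $\Gamma_{\zeta_i}$ via the Chern--Weil integral~\eqref{eq:Chern-Weil-h}, yielding exactly the four monomials of $P$. Your intuition that $V_4$ indexes the monomials is correct, but in the paper it is realized geometrically through these equivariant instantons rather than through a cohomological identity on $\bonf^\sharp$.
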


\begin{proof}
     We first observe that it is sufficient to prove this in the case
     that $\check Y = S^{3}$. From the definition of $\Jsharp$, we
     recall that
\[
              \Jsharp(S^{3} ; \Gamma) = J( (S^{3},\theta);
              \mu_{\theta}; \Gamma)
\]
     and that this is a copy of $R$. To understand why the special
     case is sufficient, consider the disjoint union $\check Y = \check Y_{1}
     \cup \check Y_{2}$ where 
      \[
      \begin{aligned}
          \check Y_{1} &= \check Y \csum (S^{3}, \theta) \\
          \check Y_{2} &= S^{3} \csum (S^{3}, \theta) 
      \end{aligned}   
    \]
    with marking data $\mu_{\theta}$ implied in both cases. We have
    local systems $\Gamma_{1}$ and $\Gamma_{2}$ from the two copies of
    $\theta$, and we form local system $\Gamma = \Gamma_{1}\otimes_{R}
    \Gamma_{2}$ on $\bonf(\check Y)$.  Because
    the second is a free $R$-module, the instanton homology is a
    tensor product:
   \[
                         J(\check Y; \Gamma ) =  J(\check Y_{1};
                         \Gamma_{1} ) \otimes
                                      J(\check Y_{2}; \Gamma_{2} ).
    \] 
    We can take base points $x_{1}$, $x_{2}$ in the non-singular parts
    of either component. By naturality of the isomorphism, the
    resulting operators on the tensor product are $w_{i,x_{1}} \otimes
      1$ and $1\otimes w_{i,x_{2}}$. By an application of excision and
      its naturality, these two operators are equal. Because $J(\check
      Y_{2}; \Gamma_{2} ) = R$, the operator $w_{i,x_{2}}$ is
      multiplication by an element of $R$. The same therefore holds
      for $w_{i,x_{1}}$. 

     So let us consider the operators
     \[
                w_{i,x}: J((S^{3}, \theta) ; \mu_{\theta}; \Gamma) \to  J((S^{3}, \theta) ; \mu_{\theta}; \Gamma),
     \]
     or in the short-hand notation of webs, the operators
    \[
              w_{i,x} : \Jsharp(\emptyset;\Gamma) \to  \Jsharp(\emptyset;\Gamma) .
    \]
    By Proposition 8.11 of \cite{KM-jsharp}, this instanton homology
    group is $\Z/2$ graded. Being a free rank-1 $R$-module, it is
    non-zero in only one of the two gradings. The operators $w_{1,x}$
    and $w_{3,x}$ have odd degree and are therefore zero.

   It remains to calculate $w_{2,x}$ on
   $\Jsharp(\emptyset;\Gamma)$. In this Morse homology, there is a
   unique non-degenerate critical point $\alpha$ (without the need for
   holonomy perturbation), and to calculate $w_{2,x}$ we need to
   describe the 2-dimensional moduli space $M_{2}(\alpha,\alpha)$ of
   marked anti-self-dual orbifold connections on the cylinder. As in
   section~\ref{sec:maps-to-S1}, the three-dimensional orbifold can be
   described as the quotient of a round sphere $\hat S^{3}$ by the
   $V_{4} = Q_{8}/(\pm 1)$, and marked connections are
   $Q_{8}$-equivariant $\SU(2)$ connections on $\hat S^{3}$. A
   conformal compactification of the cylinder $\R\times \hat S^{3} /
   V_{4}$ is the orbifold $4$-sphere $\hat S^{4} / V^{4}$. The
   fixed-point set of $V_{4}$ on $\hat S^{4}$ is a circle $\hat S^{1}
   \subset \hat S^{4}$. This circle can be identified with the union of the
   lines $\R\times \hat s_{+}$ and $\R\times \hat s_{-}$ in the
   cylinder, together with two points at infinity. 

   The two-dimensional moduli space has action $1/4$ on the orbifold
   cylinder, and its pull-back to the round $\hat S^{4}$ therefore has
   action $1$. In this way, we have identified $M_{2}(\alpha,\alpha)$
   as the space of $Q_{8}$-equivariant $1$-instantons in an $\SU(2)$
   bundle on $\hat S^{4}$. The $1$-instanton moduli space on the round
   4-sphere is a 5-ball, and the subspace of instantons that are
   invariant under the action of $V_{4}$ is an open 2-ball inside the
   5-ball (the one that spans the circle $\hat S^{1}\subset \hat
   S^{4}$). The Uhlenbeck compactification of this $V_{4}$-invariant
   moduli space is the closed $2$-disk, obtained by attaching $\hat
   S^{1}$. 

   Each $V_{4}$-invariant $1$-instanton becomes a $Q_{8}$-equivariant
   by lifting the action to the total space of the
   $\SU(2)$-bundle. However, the lift is not unique. Given one lift,
   we can obtain others by multiplying by any  character
   $Q_{8} \to \{\pm 1\}$. Since $Q_{8}$ has four characters, the
   moduli space $M_{2}(\alpha,\alpha)$ consists of four open disks,
  \[
              M_{2}(\alpha,\alpha) = \bigcup_{i=0}^{3} \mathcal{D}^{2}_{i},
  \]
   each of which has $\hat S^{1}$ as its Uhlenbeck boundary.

   Let $\bar{M}$ be the ``small'' compactification of the equivariant
   moduli space on $\hat S^{4}$, obtained by collapsing $\hat S^{1}$
   to a point. This is a bouquet of four $2$-spheres,
   \[
         \bar M = \bigvee_{0}^{3} \mathcal{S}^{2}_{i}.
    \] 
   If $x$ is
   basepoint in $\hat S^{4} / V_{4}$ which does not lie on $\hat S^{1}$, then the
   basepoint $\SO(3)$-bundle $\mathbb{E}_{x}$ on the moduli space extends as a
   bundle on $\bar{M}$.  

   \begin{lemma}\label{lem:w2-nonzero}
       The Stiefel-Whitney class of the base-point bundle is non-zero
       on each sphere in the bouquet:
          \[
              w_{2}(\mathbb{E}_{x})[ \mathcal{S}^{2}_{i } ] =1, \qquad i=0,1,2,3.
          \]
   \end{lemma}

  We postpone the proof of this lemma until the end of this
  subsection, presenting it after the proof of
  Lemma~\ref{lem:instanton-degree} below.

   To continue the proof of Proposition~\ref{prop:w2} we can choose a
   subset $Z\subset \hat{S}^{4}/ V_{4}$ disjoint from the singular set
   of the orbifold, and we can choose therein a dual representative
   $W_{2}$ for $w_{2}(\mathbb{E}_{x})$ such that the
   $\bar M \cap W_{2}$ is disjoint from the vertex of the bouquet and
   meets each of the spheres $\mathcal{S}^{2}_{i}$ transversely. The
   lemma tells us that $\mathcal{S}^{2}_{i} \cap W_{2}$ is an odd
   number of points. Returning to the viewpoint of the cylinder
   $\R\times (\hat S^{3}/V_{4})$, we learn that the transverse
   intersection $M_{2}(\alpha,\alpha)\cap W_{2}$ consists of an odd
   number of points in each of the disks $\mathcal{D}^{2}_{i}$, for
   $i=0,1,2,3$. So if $\zeta_{i}$ is a single point of
   $\mathcal{D}_{i}$ for each $i$, then we have that $w_{2,x}$ is
   multiplication by the element
   \[
             \Gamma_{\zeta_{0}} +   \Gamma_{\zeta_{1}} +
             \Gamma_{\zeta_{2}} + 
  \Gamma_{\zeta_{3}}  \in \Hom(\Gamma_{\alpha},\Gamma_{\alpha}) = R.
   \]
   To complete the proof of Proposition~\ref{prop:w2} we must compute
   each $\Gamma_{\zeta_{i}}$ and show that the above sum is $P$. The
   following lemma therefore finishes the argument.
\end{proof}

   \begin{lemma}\label{lem:instanton-degree}
       Let $\zeta_{i} \in M_{2}(\alpha,\alpha)$ be a point of
       $\mathcal{D}^{2}_{i}$, for $i=0,1,2,3$. Then, with suitable
       conventions and numbering of the components, we have:
         \[
         \begin{aligned}
             \Gamma_{\zeta_{0}} &= T_{1} T_{2} T_{3} \\
             \Gamma_{\zeta_{1}} &= T_{1} T_{2}^{-1} T_{3}^{-1} \\
             \Gamma_{\zeta_{2}} &= T_{1}^{-1} T_{2} T_{3}^{-1} \\
             \Gamma_{\zeta_{3}} &= T_{1}^{-1} T_{2}^{-1} T_{3} .
         \end{aligned}
          \]
   \end{lemma}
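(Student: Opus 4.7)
My plan is to combine the Chern-Weil formula \eqref{eq:Chern-Weil-h} with a symmetry analysis of how the four components $\mathcal{D}^2_i$ are related. The four components are indexed by the four characters of $Q_8$: each $V_4$-invariant $1$-instanton on $\hat S^4$ carries four $Q_8$-equivariant lifts, two differing by multiplication of the fiber action by a character $\chi: Q_8 \to \{\pm 1\}$. Let $\chi_0$ be the trivial character, and for $m = 1, 2, 3$ let $\chi_m$ be the character whose kernel contains $\hat I_m$ (so that $\chi_i(\hat I_j) = 1$ precisely when $i = 0$ or $i = j$); I label the corresponding components $\mathcal{D}^2_0, \mathcal{D}^2_1, \mathcal{D}^2_2, \mathcal{D}^2_3$ accordingly.

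First I would analyze how $\Delta \tilde h_m$ changes under a character twist. The isomorphism $L: C(\hat I_m) \to \R/\Z$ used to define $h_m$ is pinned down by sending the element of $\SU(2)$ which acts on fibers as the $Q_8$-element $\hat I_m$ to $1/4$. Twisting by $\chi$ replaces this distinguished element by $\chi(\hat I_m)\hat I_m$, and when $\chi(\hat I_m) = -1$ the unique group iso with the new normalization is $-L$. Since $h_m = 2L$, the sign of $\Delta \tilde h_m$ flips exactly when $\chi(\hat I_m) = -1$. This reproduces the sign pattern visible in the four claimed monomials, so it suffices to show $\Delta \tilde h_m = +1$ on the base component $\mathcal{D}^2_0$ for each $m \in \{1,2,3\}$.

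For the base case I would restrict the conformally extended $V_4$-invariant $1$-instanton on $\hat S^4$ to the $I_m$-fixed $2$-sphere $\hat S^2_m$. There the $\SO(3)$-connection reduces to the circle $C(I_m)_1 \subset \SO(3)$, giving an $S^1$-bundle $K$ on $\hat S^2_m$, and I would compute $c_1(K)[\hat S^2_m]$ by an explicit polar integration in stereographic coordinates on $\hat S^4 \setminus \{\infty\} \cong \R^4$ using the standard form of the BPST $1$-instanton; the value is $\pm 2$. The involution $I_n$ ($n \ne m$) is orientation-reversing on $\hat S^2_m$ and exchanges $\hat\gamma_m$ with $\hat\gamma_m^*$, while simultaneously, via the relation $\hat I_n \hat I_m \hat I_n^{-1} = -\hat I_m$, acting on $C(\hat I_m)$ by inversion; together these effects force the Chern-Weil integrals over $\R \times \hat\gamma_m$ and over $\R \times \hat\gamma_m^*$ to agree. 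Hence $|\Delta \tilde h_m| = \tfrac{1}{2}\, |c_1(K)[\hat S^2_m]| = 1$, and with the sign convention that gives $\Gamma_{\zeta_0} = T_1 T_2 T_3$ the base value is $+1$; the $S_3$-symmetry permuting the three arcs $\hat\gamma_1, \hat\gamma_2, \hat\gamma_3$ ensures the same value for all three $m$.

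The main obstacle is this explicit Chern-Weil computation, in particular establishing $|c_1(K)[\hat S^2_m]| = 2$ and carefully bookkeeping the normalization that relates the $\SU(2)$-eigenbundle viewpoint to the $\SO(2) \subset \SO(3)$ reduction entering \eqref{eq:Chern-Weil-h}. A non-zero value is required just to avoid the absurdity $\sum_i \Gamma_{\zeta_i} = 0$ in characteristic $2$; the precise integer $|\Delta \tilde h_m| = 1$ (and not $2$ or more) is what produces the four specific monomials of $P$ from the sign-flip pattern established in the second step.
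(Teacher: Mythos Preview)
Your approach is essentially the same as the paper's: both reduce to a Chern--Weil computation on the $I_m$-fixed $2$-sphere $\hat S^2_m$ for the base component $\mathcal{D}^2_0$, and then obtain the other three monomials by analyzing how a character twist flips the normalization of $L$ and hence the sign of $\Delta\tilde h_m$. The one simplification the paper offers is in the base computation: rather than integrating the BPST curvature explicitly, it identifies the $\SO(3)$ instanton bundle with $\Lambda^-\subset\Lambda^2$ on $\hat S^4$, so that along $\hat S^2_m$ the decomposition $\Lambda^-|_{\hat S^2_m}=\R\oplus K_m$ exhibits $K_m$ as the tangent bundle of $\hat S^2_m$; then $\deg K_m = e(\hat S^2_m)=2$ immediately, and the hemisphere integral gives $\Delta\tilde h_m=1$ without any coordinate computation or the separate symmetry argument you sketch for halving.
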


   \begin{proof}[Proof of Lemma~\ref{lem:instanton-degree}]
       Consider the case of $\zeta_{0} \in \mathcal{D}^{2}_{0}$. We
       take this point to be the center of the disk, which is the
       standard $1$-instanton on $\hat S^{4}$ with $\SO(5)$ symmetry. Each of
       the disks corresponds to one choice of how to lift the action
       of $V_{4}$ on the standard $\SO(3)$ 1-instanton to an action of
       $Q_{8}$ on the $\SU(2)$ bundle. We take $\mathcal{D}^{2}_{0}$
       to be the one obtained by identifying the $\SU(2)$ bundle of
       the centered and scaled 1-instanton with the spin bundle $S^{-}$ on $\hat S^{4}$
       and making $Q_{8} \subset \Spin(5)$ act on the spin bundle in
       the standard way. We use the formulae
       \eqref{eq:Gamma-explained} and \eqref{eq:Chern-Weil-h} to
       compute $\Gamma_{\zeta_{0}}$.

       After conformally compactifying $\R\times  \hat S^{3}$ to get
       $\hat S^{4}$, the space $\R\times \hat\gamma_{m}$ becomes one
       hemisphere in the $2$-sphere $\hat S^{2}_{m} \subset \hat
       S^{4}$ which is the fixed point set of $I_{m} \in V_{4}$. The
       $\SO(3)$ instanton bundle on $\hat S^{4}$ is the rank-3 bundle
       $\Lambda^{-}\subset \Lambda^{2}$. Along $\hat S^{2}_{m}$ this
       bundle decomposes under the action of $I_{m}$ as
       \[
              \Lambda^{-}|_{\hat S^{2}_{m}} = \R \oplus K_{m},
       \]
       where the circle bundle $K_{m} \to  \hat S^{2}_{m}$ can be
       identified with the tangent bundle, as a bundle with
       connection. Because we are integrating over half of the sphere,
       we see from \eqref{eq:Chern-Weil-h} that
       \[
       \begin{aligned}
           \Delta \tilde h_{m}(\zeta_{0}) &= \frac{1}{2} \deg K_{m} \\
                                         &= \frac{1}{2} e(
                                         S^{2}_{m}) \\
                                         &= 1.
       \end{aligned}
       \]
       (There is some sign ambiguity remaining in our construction of
       $\Gamma$, but the final sign here fixes our conventions.) From
       \eqref{eq:Gamma-explained}, we therefore obtain
       \[
           \Gamma_{\zeta_{0}} = T_{1} T_{2} T_{3}.
        \]
  
       The other three disks $\mathcal{D}^{2}_{i}$ are obtained by
       changing the lift of the $V_{4}$ action to $Q_{8}$ by a
       non-trivial character of $Q_{8}$. The each non-trivial
       character changes two of the three lifts $\hat I_{1}$, $\hat
       I_{2}$, $\hat I_{3}$ by $-1$. Changing $\hat I_{m}$ by $-1$
       changes the identification $C(I_{m})_{1} \to S^{1}$ by
       $z\mapsto -z$, and so changes $\tilde h_{m}$ to $-\tilde
       h_{m}$. Thus $\Gamma_{\zeta_{i}}$ differs from
       $\Gamma_{\zeta_{0}}$ by changing the sign of the exponent of
       $T_{m}$, for two of the three values of $m$. This proves the lemma.
   \end{proof}

   \begin{proof}[Proof of Lemma~\ref{lem:w2-nonzero}]
       We return to the postponed proof of
       Lemma~\ref{lem:w2-nonzero}. The sphere $\mathcal{S}^{2}_{i}$ is
       obtained from a the closed disk $\bar{\mathcal{D}}^{2}_{i}$ by
       collapsing the boundary to a point. We need to describe both
       the bundle $\mathbb{E}_{x}\to \bar{\mathcal{D}}^{2}_{i}$ and
       the trivialization that is used when collapsing the boundary.
 
       Let $\hat x$ be a lift of $x$ to the round sphere $\hat
       S^{4}$. There is a unique $2$-disk $\hat D^{2} \subset \hat
       S^{4}$ which has boundary $\hat S^{1}$ and contains $\hat
       x$. We may take it that $\hat x$ is the center of the disk. We
       identify the centered 1-instanton bundle with $\Lambda^{-}$ as
       in the previous lemma. The disk $\mathcal{D}_{i}^{2}$ in the
       moduli space $M$ is obtained by applying conformal
       transformations, and we can use these to identify
       $\mathcal{D}_{i}^{2}$ with $\hat D^{2}$ in such a way that the
       basepoint bundle \[ \mathbb{E}_{x} \to \mathcal{D}^{2}_{i} \]
       becomes the bundle \[ \iota^{*}(\Lambda^{-})|_{\hat D^{2}}, \]
       where $\iota$ is the involution on the disk that fixes the
       center $x$. The trivialization of $\Lambda^{-}$ that we must
       use on the boundary of $\hat D^{2}$ is the $V_{4}$-invariant
       trivialization of $\Lambda^{-}$ on the circle $\hat
       S^{1}$. This is the same trivialization as is obtained by
       parallel transport around $\hat S^{1}$.

       The question of whether $w_{2}$ is zero or not now becomes the
       question of whether the trivialization of $\Lambda^{-}$ on
       $\hat S^{1}$ obtained by parallel transport can be extended to a
       trivialization on a disk in $\hat S^{4}$ spanned by $\hat
       S^{1}$. The answer is that it cannot, and this is essentially
       the same point as occurs in the proof of the previous lemma: on
       a suitable disk, we can reduce the structure group of
       $\Lambda^{-}$ to $\SO(2)$, and the $\SO(2)$ bundle has degree
       $1$ with respect to the trivialization.
   \end{proof}

\subsection{Three-edge relations}

Before moving on, we consider a different relation involving the
operators $u_{e}$, that can be proved in the same manner. Let $\check{S}$ be
an orbifold 2-sphere with three singular points with $\Z/2$. Suppose
$S$ is embedded in $\check Y$ as a sub-orbifold. (In terms of the web
$K$ in the three-manifold $Y= | \check Y|$, this is a sphere meeting
$K$ transversely in three points belonging to edges of $K$.)
Corresponding to  the three points of intersection, we have three
operators
\[
     u_{1} , u_{2}, u_{3} : \Jsharp(\check Y; \Gamma) \to \Jsharp(\check Y; \Gamma) .
\]

\begin{lemma}\label{lem:three-edge-relations}
    The three operators $u_{1}$, $u_{2}$, $u_{3}$ satisfy the
    relations
   \[
   \begin{gathered}
       u_{1} + u_{2} + u_{3} = 0 \\
              u_{1} u_{2} u_{3} = 0. 
   \end{gathered}
    \]
\end{lemma}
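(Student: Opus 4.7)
The plan is to follow the same three-step pattern as the proof of Proposition~\ref{prop:u-relation} --- universal cohomology identity, chain-level operator identity, vanishing of Stiefel--Whitney operators --- but with the single line-subbundle setup of Lemma~\ref{lem:u-relation-H} replaced by a full orthogonal decomposition of the $\SO(3)$-bundle restricted to $\check S$ as a sum of three real line bundles.

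First I would enlarge to the product cobordism $\check X=[0,1]\times \check Y$, pick a compact sub-bifold $Z\subset \check X$ containing $[0,1]\times \check S$ and satisfying the $H^{1}$-injectivity condition of Section~\ref{sec:op-props}, and restrict the universal $\SO(3)$-bundle $\mathbb{E}$ to $\check S\times \bonf^{*}(Z;\nu\cap Z)$. The crucial geometric input is a canonical decomposition
\[
  \mathbb{E}|_{\check S\times \bonf^{*}} = \mathbb{L}_{1}\oplus \mathbb{L}_{2}\oplus \mathbb{L}_{3}
\]
of the restricted bundle as an orthogonal sum of three real orbifold line bundles, with $\mathbb{L}_{i}|_{\{s_{i}\}\times \bonf^{*}}=\mathbb{L}_{s_{i}}$. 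To build it I would pass to the $V_{4}$-branched cover $\tilde S\to \check S$ (so $\tilde S = S^{2}$ with $V_{4}$ acting by three mutually perpendicular half-turns) and observe that the equivariant lift of any bifold $\SO(3)$-connection carries three commuting fibre involutions $I_{1},I_{2},I_{3}$ that satisfy $I_{1}I_{2}I_{3}=1$ and act non-trivially at the fixed points; such a triple must consist of rotations by $\pi$ about three mutually perpendicular axes, and the simultaneous eigenspace decomposition of the three commuting involutions acting globally on the fibres then produces the desired splitting into three real line bundles.

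With the decomposition in hand, the usual identifications give $w_{1}(\mathbb{L}_{i})=\mathsf{u}_{s_{i}}$ in $H^{1}(\bonf^{*}(Z))$ (independent of which point of $\check S$ one restricts at), and pulling the Whitney-sum expansion back to a smooth basepoint $x\in \check S$ yields the cohomology identities
\[
  \mathsf{w}_{1,x}=\mathsf{u}_{s_{1}}+\mathsf{u}_{s_{2}}+\mathsf{u}_{s_{3}},\qquad \mathsf{w}_{3,x}=\mathsf{u}_{s_{1}}\,\mathsf{u}_{s_{2}}\,\mathsf{u}_{s_{3}}
\]
in $H^{*}(\bonf^{*}(Z;\nu\cap Z))$. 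The same chain-homotopy machinery used in the proof of Lemma~\ref{lem:proto-u-relation} --- pick pairwise disjoint sub-bifold representatives for each class, construct a codimension-$2$ stratified subvariety bounding the difference, and count ends of the resulting $1$-dimensional intersections --- promotes these to operator identities
\[
  u_{1}+u_{2}+u_{3}=w_{1,x},\qquad u_{1}u_{2}u_{3}=w_{3,x}
\]
on $\Jsharp(\check Y;\Gamma)$. Invoking Proposition~\ref{prop:w2}, which states that both $w_{1,x}$ and $w_{3,x}$ vanish as operators, then yields the two claimed relations.

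The hard part is the global decomposition in step~2. While the three commuting involutions and their orthogonal eigen-axes clearly exist in the fibre over each singular point, organizing them into three honest real line subbundles of $\mathbb{E}$ on all of $\check S\times \bonf^{*}$ requires that the pointwise $V_{4}$-structure on the fibres extend consistently across the base, in families parametrized by the entire configuration space $\bonf^{*}(Z;\nu\cap Z)$; the $V_{4}$-branched cover provides a clean framework in which to carry this out, but one must verify that the resulting equivariant eigen-line bundles on $\tilde S\times \bonf^{*}$ descend to genuine orbifold line bundles on $\check S\times \bonf^{*}$ with the prescribed behaviour at each $s_{i}$.
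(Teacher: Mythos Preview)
Your approach is workable but genuinely different from the paper's, and also heavier than necessary.

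\textbf{What the paper does.} For the first relation the paper gives a short, self-contained topological argument: working on a neighborhood $Z$ of $\{1/2\}\times\check S$, it shows $\mathsf{u}_{1}+\mathsf{u}_{2}+\mathsf{u}_{3}=0$ in $H^{1}(\bonf^{*}(Z);\F)$ by noting that the triviality of $\mathbb{L}_{s_i}$ on a loop in $\bonf^{*}$ is equivalent to the vanishing of $w_{2}$ of the corresponding bifold bundle on the torus $S^{1}\times\delta_{i}$, and the three such tori bound in the smooth part of $S^{1}\times\check S$. No global line-bundle decomposition of $\mathbb{E}$ is needed, nor any appeal to $w_{1,x}=0$. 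For the second relation the paper gives a two-line algebraic derivation: in characteristic~$2$, $u_{3}=u_{1}+u_{2}$, and expanding $u_{3}^{3}+Pu_{3}=0$ using $u_{i}^{3}+Pu_{i}=0$ for $i=1,2$ yields $u_{1}u_{2}(u_{1}+u_{2})=0$, i.e.\ $u_{1}u_{2}u_{3}=0$. No cohomology identity and no appeal to $w_{3,x}=0$ is needed.

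\textbf{What you do differently, and what it buys.} Your route builds the full splitting $\mathbb{E}|_{\check S\times\bonf^{*}}=\mathbb{L}_{1}\oplus\mathbb{L}_{2}\oplus\mathbb{L}_{3}$, reads off all three symmetric functions via Whitney sum, and then invokes Proposition~\ref{prop:w2}. This is more conceptual and in fact anticipates the Remark after Proposition~\ref{prop:vertex-relations} (which records exactly these identities at a vertex); it also gives the middle relation $\mathsf{u}_{1}\mathsf{u}_{2}+\mathsf{u}_{2}\mathsf{u}_{3}+\mathsf{u}_{3}\mathsf{u}_{1}=\mathsf{w}_{2,x}$ for free. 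The cost is the global decomposition you flag as ``the hard part,'' and your sketch of it is not quite right as written: the $V_{4}$-action on the pullback bundle over $\tilde S$ covers a free action on the base away from six points, so there are no ``three commuting fibre involutions acting globally on the fibres.'' The correct argument is that the bifold $\SO(3)$ bundle on $\check S$ is unique up to isomorphism (its monodromy is the unique injection $V_{4}\hookrightarrow\SO(3)$), with automorphism group the centralizer $V_{4}$; since $V_{4}$ preserves the eigenline decomposition of $\R^{3}$, the universal family over $BV_{4}$ splits, and hence so does its pullback to $\bonf^{*}(Z)$. With that fix your argument goes through, but the paper's two-line proofs are considerably lighter.
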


\begin{proof}
    Let $\check X = [0,1]\times \check Y$, and $Z\subset \check X$ be
    a regular neighborhood of the sphere $\{1/2\} \times S$, meeting
    the singular set only in the codimension-2 strata. On
    $\bonf^{*}(Z)$ we have three cohomology classes $\mathsf{u}_{i}$,
    for $i=1,2,3$, and the first relation in the lemma will follow, just as in the proof of
    Lemma~\ref{eq:proto-u-relation}, if we establish a relation in the
    cohomology of $\bonf^{*}(Z)$,
   \[
            \mathsf{u}_{1} + \mathsf{u}_{2} +\mathsf{u}_{3} = 0.
   \]
   This is equivalent to showing that for any bifold $\SO(3)$
   connection on $S^{1}\times \check S$, the product of the three real
   line bundles $\mathbb{L}_{i} \to S^{1}$ obtained from the three
   orbifold points of $\check S$, is trivial. The triviality of
   $\mathbb{L}_{i}$ is equivalent to the bifold bundle $E$ having
   $w_{2}=0$ on the torus $S^{1}\times \delta_{i}$, where
   $\delta_{i}\subset \check S$ is a circle linking the $i$'th
   singular point. Since the sum of the three tori bounds in the
   smooth part of $S^{1}\times \check S$, the above relation follows.

   The second relation is an algebraic consequence of the first
   relation and the relations $u_{i}^{3} + P u_{i}=0$.
\end{proof}

\section{Proof of the main theorem}

In this section, we present the proof of
Theorem~\ref{thm:deformed-is-Tait}. Although the final statement of
the theorem involves webs $K$ in $\R^{3}$ or $S^{3}$, we continue to
treat the case of a general bifold $\check Y$ correpsonding to a web
$K\subset Y$, for as long as possible. Nevertheless, the 3-manifold
$Y$ will be omitted from our notation, and we write
$\Jsharp(K;\Gamma)$ for $\Jsharp(\check Y; \Gamma)$. We will make
clear that $Y=S^{3}$ in the statements that require it.

\subsection{The edge decomposition}

Let $R'$ be obtained from $R$ by adjoining an inverse $1/P$ for the
element for $P$ \eqref{eq:p}. For any bifold $\check Y$, let $\Gamma'$
be the local system $\Gamma \otimes_{R} R'$. The corresponding
instanton homology group $\Jsharp(K; \Gamma')$ is an
$R'$-module. The polynomial 
\[
           u^{3} + Pu
\]
which annihilates the edge-operators $u_{e}$ is the product of
factors $u$ and $u^{2} + P$ which are coprime in $R'[u]$. That is, we
have
\[
           a u + b (u^{2}+P) =0
\]
in $R'[u]$, where $a=u/P$ and $b=1/P$. It follows that the module
$\Jsharp(K; \Gamma')$ has a direct sum decomposition,
\[
\begin{aligned}
     \Jsharp(K; \Gamma') &= \ker(u_{e}) \oplus \ker
     (u_{e}^{2} + P) \\
       &= \ker (u_{e}) \oplus \im (u_{e}).
\end{aligned}
\]
This is the (generalized) eigenspace decomposition: if we
were to adjoin a square-root of $P$, then these summands become the
eigenspace and generalized eigenspace for the two eigenvalues $0$ and $P^{1/2}$.
There is one such decomposition for each edge $e$ of $K$, and since
the operators belonging to different edges commute, we obtain a
decomposition of the $R'$-module into simultaneous generalized
eigenspaces. To introduce notation for this, let us write
\[
            \Edges(K) = \{\text{edges of $K$}\},
\] 
and given a subset $s\subset
\Edges(K)$, let us write
\[
                    V(K;s) = \left(\bigcap_{e\in s} \ker(u_{e})\right) 
                                \cap \left(\bigcap_{e\notin s} \im(u_{e})\right) .
\]
Then we have a decomposition of the
$R'$-module into  $2^{\#\Edges(K)}$ direct summands:
\begin{equation}
    \label{eq:espace-decomp}
    \Jsharp(K; \Gamma') = \bigoplus_{s\subset \Edges(K)} V(K;s).
\end{equation}

\begin{definition}\label{def:espace-decomp}
    For any web $K\subset Y$, we refer to the above direct sum decomposition of the $R'$-module
    $\Jsharp(K;\Gamma')$ as the \emph{edge decomposition}.
\end{definition}

\subsection{The case of the unknot}

We calculate the edge decomposition for the unknot $K$ in $\R^{3}$,
which has a single, circular edge $e$.

\begin{proposition}\label{prop:u-matrix-U}
    For the unknot $K=U$, the $R$-module $\Jsharp(U ; \Gamma)$ is free of
    rank $3$, and with respect to a standard basis the matrix of the
    operator $u_{e}$ is
\[
             \begin{pmatrix}
               0 & 0 & 0 \\
               1 & 0 & P \\
               0 & 1 & 0
             \end{pmatrix}.
\]
     Over the ring $R'$ the module $\Jsharp(U;\Gamma')$ decomposes as
     a direct sum of $\ker(u_{e})$ and $\im(u_{e})$, which have ranks
     $1$ and $2$ respectively.
\end{proposition}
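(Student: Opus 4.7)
The plan is to exhibit $\Jsharp(U;\Gamma)$ as the cyclic $R$-module generated by a distinguished element $\mathbf{1}$, and then to read off the matrix of $u_{e}$ from the cubic relation in Proposition~\ref{prop:u-relation}. Let $\mathbf{1}\in \Jsharp(U;\Gamma)$ be the image of $1\in R = \Jsharp(\emptyset;\Gamma)$ under the cup cobordism $D^{2}\subset[0,1]\times S^{3}$, and set $v_{i} := u_{e}^{i}\cdot \mathbf{1}$ for $i=0,1,2$. Because $u_{e}^{3} = Pu_{e}$ in characteristic $2$, the triple $(v_{0},v_{1},v_{2})$ is closed under the action of $u_{e}$ with $u_{e}v_{0}=v_{1}$, $u_{e}v_{1}=v_{2}$, and $u_{e}v_{2}=Pv_{1}$---exactly the stated matrix. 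Thus the proposition reduces to showing that $v_{0},v_{1},v_{2}$ form an $R$-basis of $\Jsharp(U;\Gamma)$.

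For the upper bound, the inequality \eqref{eq:main-inequality} combined with the calculation $\dim_{\F}\Jsharp(U)=3$ from \cite{KM-jsharp} gives $\rank_{R}\Jsharp(U;\Gamma)\le 3$. For linear independence, I would pair the $v_{i}$ against cap cobordisms $D^{2}\subset [0,1]\times S^{3}$ carrying various numbers of dots. By the functoriality of $\Jsharp(\;\text{---}\;;\Gamma)$, each pairing $\langle v_{i},\,\text{cap}_{j}\rangle\in R$ is the $\Jsharp$-evaluation of the closed foam in $\R^{4}$ obtained by gluing cup to dotted cap: a $2$-sphere carrying $i+j$ dots. Setting $T_{1}=T_{2}=T_{3}=1$ recovers the analogous pairings in the undeformed theory, where $\Jsharp(U)\cong \F[u_{e}]/(u_{e}^{3})$ is known to carry a non-degenerate Frobenius trace form. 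Hence the $3\times 3$ pairing matrix over $R$ has determinant whose reduction modulo the ideal $(T_{1}-1,T_{2}-1,T_{3}-1)$ is a unit of $\F$, so this determinant is a unit of $R$ itself. It follows that $v_{0},v_{1},v_{2}$ span a free rank-$3$ summand, and combined with the rank bound they must span all of $\Jsharp(U;\Gamma)$.

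The $R'$-decomposition is then formal: since $P$ is a unit in $R'$, the polynomials $u$ and $u^{2}+P$ are coprime in $R'[u]$, so the Chinese remainder theorem applied to the cyclic $R'[u]$-module $\Jsharp(U;\Gamma')\cong R'[u]/(u^{3}+Pu)$ splits it as $\ker(u_{e})\oplus\ker(u_{e}^{2}+P)$, which coincides with $\ker(u_{e})\oplus \im(u_{e})$ and has $R'$-ranks $1$ and $2$ respectively. The main obstacle in this plan is the sphere-with-dots calculation at $T_{m}=1$: one must know not merely that $\dim_{\F}\Jsharp(U) = 3$, but that the specific bilinear form $\langle u_{e}^{i}\cdot \mathbf{1},\,u_{e}^{j}\cdot \mathbf{1}\rangle$ obtained by evaluating dotted spheres in $\R^{4}$ is non-degenerate---a Frobenius-algebra fact implicit in, rather than stated verbatim by, \cite{KM-jsharp}.
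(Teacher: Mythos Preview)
Your approach is essentially the same as the paper's---cup and cap cobordisms, sphere-with-dots evaluations, and the cubic relation---but there are two genuine gaps.

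\textbf{The unit argument is invalid.} You claim that because the determinant of the pairing matrix reduces to a unit modulo $(T_{1}-1,T_{2}-1,T_{3}-1)$, it must be a unit in $R$. This is false: the units of $R=\F[\Z^{3}]$ are exactly the monomials, and there are plenty of non-units (e.g.\ $1+T_{1}+T_{1}^{2}$) that evaluate to $1$ at $(1,1,1)$. The paper instead computes the sphere evaluations directly over $R$: one has $\langle S(0)\rangle=\langle S(1)\rangle=0$, $\langle S(2)\rangle=1$, $\langle S(3)\rangle=0$, $\langle S(4)\rangle=P$, so the pairing matrix is
\[
\begin{pmatrix} 0&0&1\\ 0&1&0\\ 1&0&P \end{pmatrix},
\]
whose determinant is literally $1\in R$. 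So your conclusion is correct but your route to it is not; you need the actual evaluations over $R$, not just their specialization.

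\textbf{The rank bound does not exclude torsion.} Even granting that $v_{0},v_{1},v_{2}$ span a free rank-$3$ direct summand, the inequality $\rank_{R}\Jsharp(U;\Gamma)\le \dim_{\F}\Jsharp(U)=3$ bounds only the rank, not the number of generators; it leaves open the possibility $\Jsharp(U;\Gamma)\cong R^{3}\oplus T$ with $T$ a nonzero torsion module. The paper closes this gap by a different mechanism: the representation variety of $U$ is $\RP^{2}$, and after a suitable holonomy perturbation the Chern--Simons functional has exactly three critical points, so the chain complex $C^{\sharp}(U;\Gamma)$ is itself a free $R$-module of rank $3$. Once the homology contains a free rank-$3$ submodule, rank-nullity over the fraction field forces $\partial=0$, and the homology is exactly $R^{3}$. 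You need this Morse-theoretic input (or something equivalent) to finish.
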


As in \cite{KM-jsharp}, we obtain information about the unknot by
considering the unknotted 2-sphere $S\subset \R^{4}$ as a foam. We
write $S(m)$ for the 2-sphere with $m$ dots, considered as a cobordism
from the empty web to itself. As a cobordism, it has an evaluation in
$\Jsharp(\, \text{---}\, ; \Gamma)$, which we write as
\[
                 \left\langle S(m) \right\rangle \in R.
\]

\begin{lemma}
    For $m=0,1$ and $2$, we have $\left\langle S(m) \right\rangle = 0,0,1$
    respectively. For $m>2$, we have $\left\langle S(m) \right\rangle = P \left\langle
    S(m-2) \right\rangle$.
\end{lemma}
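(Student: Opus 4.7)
I would handle the recurrence and the base cases separately. The recurrence $\langle S(m)\rangle = P\langle S(m-2)\rangle$ for $m>2$ is purely formal once Proposition~\ref{prop:u-relation} is in hand; the three base-case evaluations $0,0,1$ are moduli-space computations that essentially reduce to their analogues in the constant-coefficient setting of \cite{KM-jsharp}.

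For the recurrence, view the closed foam $S(m)\subset\R^{4}$ as the composition of a cap disk $\eta\colon\emptyset\to U$, the $m$-fold dot on the cylinder $[0,1]\times U$ (which the functor $\Jsharp(\text{---};\Gamma)$ realizes as the operator $u_{e}^{m}$ on $\Jsharp(U;\Gamma)$), and a cup disk $\varepsilon\colon U\to\emptyset$. Functoriality then gives $\langle S(m)\rangle = \varepsilon(u_{e}^{m}\,\eta(1))\in R$. Proposition~\ref{prop:u-relation} asserts $u_{e}^{3}=Pu_{e}$ (we are in characteristic~$2$), so $u_{e}^{m}=Pu_{e}^{m-2}$ for every $m\ge 3$, and the recurrence follows immediately.

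For the base cases, $\langle S(m)\rangle\in R$ is a $\Gamma$-weighted count of points in a zero-dimensional moduli space of bifold ASD connections on the appropriate closed $4$-orbifold. The expected dimension of this moduli space grows linearly with $m$: for $m=0$ and $m=1$ it is negative, so after generic perturbation the moduli space is empty and $\langle S(0)\rangle=\langle S(1)\rangle = 0$, while for $m=2$ it is zero and (as already analyzed in \cite{KM-jsharp}) the moduli space reduces to a single point, namely the standard basic $1$-instanton on $S^{4}$ restricted to the bifold structure. To identify $\langle S(2)\rangle$ with $1\in R$ exactly, I would apply the Chern--Weil formula~\eqref{eq:Chern-Weil-h} to this solution: since the sphere $S$ is disjoint from the $\theta$-graph used to form the connect-sum, the $S^{1}$-connections on $\R\times\hat\gamma_{m}$ are trivial, each $\Delta\tilde h_{m}$ vanishes, and the $\Gamma$-weight of the single point is $T_{1}^{0}T_{2}^{0}T_{3}^{0}=1$.

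The main obstacle is to verify the last step cleanly: one must show that the $m=2$ count is \emph{exactly} $1$ in $R$ and not merely $1$ modulo the augmentation ideal $(T_{1}-1,T_{2}-1,T_{3}-1)$, which amounts to checking that the localization of the marking data $\mu_{\theta}$ away from $S$ really does force all holonomy contributions to be trivial. Once this is in place, the recurrence and the three base cases together determine $\langle S(m)\rangle$ for all $m\ge 0$.
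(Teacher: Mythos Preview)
Your overall approach is the paper's: the recurrence for $m>2$ comes immediately from $u_{e}^{3} + P u_{e} = 0$, and for $m<2$ the relevant zero-dimensional moduli spaces would have negative action and are therefore empty. The paper's proof is essentially two sentences to this effect, together with one sentence for $m=2$.

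Your treatment of $m=2$, however, misidentifies the moduli space. The connections that contribute are not instantons at all: the two-dimensional moduli space in question is the space of \emph{flat} bifold connections for the foam $(S^{4}, S)$, which is a copy of $\RP^{2}$ (parametrizing the order-$2$ element of $\SO(3)$ that serves as holonomy around a meridian of $S$). One then evaluates $\mathsf{u}_{s}^{2}$ on this $\RP^{2}$; since $\mathsf{u}_{s} = w_{1}(\mathbb{L}_{s})$ generates $H^{1}(\RP^{2};\F)$, the pairing is $1$. Your worry about whether the answer is exactly $1\in R$ rather than merely $1$ modulo the augmentation ideal is then resolved for free: because these connections are flat, the curvature integrals in \eqref{eq:Chern-Weil-h} vanish and each $\Gamma_{\zeta}=1$. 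By contrast, your stated justification---that $S$ is disjoint from $\theta$---is not by itself sufficient: disjointness of the singular sets does not prevent a genuine instanton from carrying curvature near $\theta$. Replace ``the standard basic $1$-instanton on $S^{4}$'' by ``a flat connection in the $\RP^{2}$ moduli space'' and your argument becomes correct and coincides with the paper's.
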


\begin{proof}
    This follows \cite{KM-jsharp}. For $m<2$, the formula for the
    dimension of the moduli spaces tells us that the zero-dimensional
    moduli spaces have negative action, and are therefore empty. For
    $m=2$, we are evaluating $u_{e}^{2}$ on a compact moduli space of
    flat connections equal to $\RP^{2}$, and the evaluation is
    $1$. The claim for $m>2$ is a consequence of the relation \eqref{eq:u-relation}.
\end{proof}

\begin{proof}[Proof of Proposition~\ref{prop:u-matrix-U}]
Let us write $S$ as the union of two disks $D^{+}$ and $D^{-}$, viewed
as cobordisms from the empty web to $U$ and back. Let $D^{\pm}(m)$ be
the same foams, but with $m$ dots. Let us write
\[
\begin{aligned}
    \mathbf{v}_{m} &= \Jsharp(D^{+}(m) ; \Gamma) \\ &\in
    \Jsharp(U;\Gamma) 
\end{aligned}
\]
and
\[
\begin{aligned}
    \mathbf{a}_{m} &= \Jsharp(D^{-}(m) ; \Gamma) \\ &\in \Hom (
    \Jsharp(U;\Gamma), R).
\end{aligned}
\]

We have $\mathbf{a}_{m} ( \mathbf{v}_{n}) = \left\langle S(m+n) \right\rangle$,
so from the lemma we can read off these pairings. If we set
\[
\begin{aligned}
    \mathbf{b}_{0} &= \mathbf{a}_{2} + P \mathbf{a}_{0} \\
    \mathbf{b}_{1} &= \mathbf{a}_{1}\\
    \mathbf{b}_{2} &= \mathbf{a}_{0}
\end{aligned}
\]
then we can read off that \[ \mathbf{b}_{m}(\mathbf{v}_{n}) =
\delta_{mn}\] for $m, n \le 2$. 

The representation variety of $U$ is
$\RP^{2}$, and there is a holonomy perturbation with three critical
points. The complex that computes $\Jsharp(U;\Gamma)$ therefore has
three generators, so $\Jsharp(U;\Gamma)$ is either free of rank $3$,
or has rank strictly less. But the above relation shows that
$\mathbf{v}_{0}$, $\mathbf{v}_{1}$, $\mathbf{v}_{2}$ generate a free
submodule. So $\Jsharp(U;\Gamma)$ has rank $3$, with the
$\mathbf{v}$'s as a basis. 

To compute the matrix of $u_{e}$ in the basis $\mathbf{v}_{0}$,
$\mathbf{v}_{1}$,  $\mathbf{v}_{2}$, we must compute
$\mathbf{b}_{n}(u_{e} \mathbf{v}_{m})$, which again can be interpreted
as evaluations of 2-spheres with dots:
\[
\begin{aligned}
    \mathbf{b}_{0}(u_{e} \mathbf{v}_{m}) &= \left\langle S(m+3) \right\rangle + P \left\langle S(m+1) \right\rangle \\
    \mathbf{b}_{1}(u_{e} \mathbf{v}_{m}) &= \left\langle S(m+2) \right\rangle\\
    \mathbf{b}_{2}(u_{e} \mathbf{v}_{m}) &= \left\langle S(m+1) \right\rangle.
\end{aligned}
\]
From this one may compute the matrix shown in the proposition. The
kernel of $u_{e}$ is the free rank-1 module spanned by the element
\[
          \begin{pmatrix}
                    P \\ 0 \\ 1
          \end{pmatrix}.
\]
The image of $u_{e}$ is the free rank-2 module spanned by the elements
\[
          \begin{pmatrix}
                    0 \\ 1 \\ 0
          \end{pmatrix}, \quad
      \begin{pmatrix}
                    0 \\0 \\ 1
          \end{pmatrix}.
\]
The same applies after tensoring with $R'$ to make $P$ invertible, and
in that case the free module of rank $3$ is the direct sum of these
two submodules.
\end{proof}

In terms of the notation of \eqref{eq:espace-decomp} and
Definition~\ref{def:espace-decomp}, the conclusion of the proposition
is that 
\begin{equation}
    \begin{aligned}
        \label{eq:V-for-U}
        &\rank V(U;\{e\}) = 1 \\
        &\rank V(U; \;\;\emptyset\;) = 2.
    \end{aligned}
\end{equation}

\subsection{The case of the theta graph}

Next, we calculate the edge decomposition for the theta graph
$\theta$, with its three edges $e_{1}$, $e_{2}$, $e_{3}$.

\begin{proposition}\label{prop:V-for-theta}
    For the theta graph, the deformed instanton homology
    $\Jsharp(\theta; \Gamma)$ is free of rank $6$. In the
    edge-decomposition of the $R'$-module $\Jsharp(\theta; \Gamma')$,
    the non-zero summands are the summands $V(\theta; s)$ where $s
    \subset \{e_{1}, e_{2}, e_{3}\}$ is a singleton. Each of the three
    non-zero summands has rank $2$.
\end{proposition}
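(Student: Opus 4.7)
The plan is to combine the vertex relations from Lemma~\ref{lem:three-edge-relations} with the cubic relation from Proposition~\ref{prop:u-relation} to constrain the edge decomposition, and then to pin down the ranks by a foam-pairing argument analogous to the unknot computation in Proposition~\ref{prop:u-matrix-U}. First, a small $2$-sphere around either vertex of $\theta$ meets all three edges transversely, so Lemma~\ref{lem:three-edge-relations} yields $u_{e_1}+u_{e_2}+u_{e_3}=0$ and $u_{e_1}u_{e_2}u_{e_3}=0$ on $\Jsharp(\theta;\Gamma')$. Combined with $u_{e_i}^{3}+Pu_{e_i}=0$, each $u_{e_i}$ has generalized eigenvalues in $\{0,P^{1/2}\}$ after adjoining a square root of $P$; in characteristic~$2$ the conditions ``sum is zero'' and ``product is zero'' force the simultaneous eigenvalue triple to be either $(0,0,0)$ or a permutation of $(0,P^{1/2},P^{1/2})$. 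Hence only $V(\theta;\{e_1,e_2,e_3\})$ and the three singletons can be non-zero; the obvious $S_{3}$-symmetry of $\theta$ makes the three singleton summands mutually isomorphic of common rank $r$, and I write $k$ for the rank of $V(\theta;\{e_1,e_2,e_3\})$.

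For the total rank of $\Jsharp(\theta;\Gamma)$, the upper bound $\leq 6$ comes from inequality~\eqref{eq:main-inequality} together with the value $\dim_{\F}\Jsharp(\theta)=6$ from \cite{KM-jsharp} (matching the six Tait colorings of $\theta$), or equivalently from bounding the number of perturbed Chern--Simons critical points on $\bonf^{\sharp}(\theta)$. For the lower bound, I would mimic Proposition~\ref{prop:u-matrix-U}: construct six capping foams $\mathbf{v}_{1},\dots,\mathbf{v}_{6}:\emptyset\to\theta$, distinguished by dot distributions on the three faces of a standard filling foam for $\theta$, together with six dual foams $\mathbf{a}_{1},\dots,\mathbf{a}_{6}:\theta\to\emptyset$. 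Each composition $\mathbf{a}_{j}\circ\mathbf{v}_{i}$ is a closed theta-foam whose value in $R$ can be computed by iterating the cubic relation $u_{e}^{3}+Pu_{e}=0$, the vertex relation $u_{e_1}+u_{e_2}+u_{e_3}=0$, and the closed-sphere evaluations $\langle S(m)\rangle$ from the lemma preceding Proposition~\ref{prop:u-matrix-U}. A judicious choice produces a pairing matrix of full rank $6$, showing $\Jsharp(\theta;\Gamma)$ is free of rank $6$, so $k+3r=6$.

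Finally, among the possibilities $(k,r)\in\{(0,2),(3,1),(6,0)\}$, the case $r=0$ is excluded because $u_{e_{1}}$ is non-zero on $\Jsharp(\theta;\Gamma)$ (verified by pairing $u_{e_{1}}\mathbf{v}_{i}$ against the dual basis for some $i$), and the case $r=1$ is excluded because the operator $u_{e_{1}}u_{e_{2}}$, whose image sits inside $V(\theta;\{e_{3}\})$ of rank $r$, can be shown to have image of rank $\geq 2$ by explicit evaluation on two suitable generators $\mathbf{v}_{i}$. Thus $(k,r)=(0,2)$, giving the claimed edge decomposition. The principal obstacle is the explicit foam-pairing calculation for the six $\theta$-generators: while the method parallels the unknot case, the closed theta-foams involved are substantially more intricate to evaluate, requiring systematic interplay of the cubic and vertex relations together with a careful choice of dot distributions to ensure the $6\times 6$ pairing matrix is non-degenerate.
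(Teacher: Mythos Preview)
Your overall strategy --- foam pairing to establish rank $6$, then eigenspace analysis to locate the summands --- is exactly the paper's approach. Your algebraic constraint on the possible summands (squaring $u_{1}+u_{2}+u_{3}=0$ to see that the number of edges with $u_{e}^{2}=P$ must be even) is correct and matches the paper's reasoning. The paper's endgame is slightly slicker than your case-split: once the six basis vectors $\mathbf{v}(0,m,n)$ with $m\in\{0,1\}$, $n\in\{0,1,2\}$ are in hand, four of them visibly lie in $\im(u_{3})$, so $\rank\im(u_{i})\ge 4$ for each $i$; combined with $\im(u_{1})\cap\im(u_{2})\cap\im(u_{3})=0$ (from the squared sum relation), a rank count forces each singleton summand to have rank exactly $2$, without needing to rule out $(k,r)=(3,1)$ or $(6,0)$ separately.

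There is, however, a real gap in your pairing step. You propose to compute the closed theta-foam evaluations $\langle\Theta(l,m,n)\rangle$ from the cubic relation, the vertex relation, and the sphere evaluations $\langle S(m)\rangle$. This cannot work: those relations are homogeneous in the sense that they relate different dot distributions on the \emph{same} underlying closed foam, so they tell you how the various $\langle\Theta(l,m,n)\rangle$ are related to one another but give no link between theta-foams and spheres. In particular, the key normalization $\langle\Theta(0,1,2)\rangle=1$ does not follow from anything you have listed; it requires an independent geometric input, namely that the relevant moduli space of flat bifold connections is the three-dimensional flag manifold and that $u_{1}u_{2}^{2}$ evaluates to $1$ there. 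The paper records exactly this in Lemma~\ref{lem:Theta-evals}. Without it, your six-by-six pairing matrix could be identically zero as far as the relations are concerned, and the argument for freeness of rank $6$ collapses. Once you supply this one extra evaluation (together with the easy vanishing statements, which do follow from dimension counts and the relation $u_{1}u_{2}u_{3}=0$), your proof goes through.
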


Following \cite{KM-jsharp} again, we look at the closed foam
$\Theta\subset \R^{4}$ consisting of three disks with a common circle
as boundary. Let $\Theta (m_{1}, m_{2}, m_{3})$ denote this foam with
$m_{i}$ dots on the $i$'th disk, and let
\[
     \left\langle \Theta (m_{1}, m_{2}, m_{3}) \right\rangle \in R
\]
be the evaluation of the closed foam in $\Jsharp(\, \text{---}\, ;
\Gamma)$. These evaluations are entirely determined by the following lemma.

\begin{lemma}\label{lem:Theta-evals}
    The evaluation $\left\langle \Theta (m_{1}, m_{2}, m_{3}) \right\rangle$ is
    symmetric in the three variables. It is
    zero if all the $m_{i}$ are positive, or if the sum of the $m_{i}$
    is either even or less than three. We have
    \[
        \left\langle \Theta (0,1,2) \right\rangle = 1.
    \]
    Finally, if $m_{1} \ge 3$, then
    \[
         \left\langle \Theta (m_{1}, m_{2}, m_{3}) \right\rangle = P   \left\langle \Theta (m_{1}-2, m_{2}, m_{3}) \right\rangle .
   \]
\end{lemma}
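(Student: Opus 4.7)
The plan is to derive each clause from material already in place: the $S_3$-symmetry of $\Theta$, a formal-dimension count, Lemma~\ref{lem:three-edge-relations}, Proposition~\ref{prop:u-relation}, and the constant-coefficient evaluation of $\Theta$ from \cite{KM-jsharp}. Symmetry in $(m_1, m_2, m_3)$ is immediate from the ambient $S_3$-action cycling the three disks of $\Theta \subset \R^4$, realized by a diffeomorphism supported away from the connected-sum region in which $\theta$ and the local system $\Gamma$ live. Vanishing when $\sum m_i$ is even or strictly less than $3$ is a formal-dimension count: the anti-self-dual moduli space on $\Theta$ carries a $\Z/2$-grading whose parity matches that of $\sum m_i$, and its expected dimension is nonnegative only once $\sum m_i \geq 3$, so outside that range the chain-level sum defining the evaluation is empty.

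Vanishing when all three $m_i > 0$ is the relation $u_1 u_2 u_3 = 0$ of Lemma~\ref{lem:three-edge-relations}, applied to a small sphere meeting the three disks of $\Theta$ transversely near their common seam: one peels off one dot from each disk and recognises the rest as a composition in which this triple product appears. The recursion is Proposition~\ref{prop:u-relation} reinterpreted at the cobordism level: in characteristic $2$ it reads $u_{e_1}^3 = P u_{e_1}$, and inserting this identity into a cylindrical collar of $\Theta$ along the first disk converts $\langle \Theta(m_1, m_2, m_3)\rangle$ into $P\langle \Theta(m_1 - 2, m_2, m_3)\rangle$ whenever $m_1 \geq 3$.

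The main obstacle is the base evaluation $\langle\Theta(0,1,2)\rangle = 1$. The dimension count together with the constant-coefficient calculation from \cite{KM-jsharp} shows that the $0$-dimensional moduli space cut out by the $(0,1,2)$ dot placement consists of a single regular trajectory, so the deformed evaluation is one Laurent monomial $T_1^{a_1} T_2^{a_2} T_3^{a_3}$; specialization $T_i \mapsto 1$ only confirms that this monomial is nonzero. To pin down the exponents $a_m$, I would run a direct Chern--Weil argument along the unique trajectory, in the spirit of Lemma~\ref{lem:instanton-degree}: describe the trajectory as an explicit equivariant instanton on the bifold cylinder, restrict the pulled-back $\SO(3)$ bundle to each $\R \times \hat\gamma_m$, reduce the structure group to $S^1$ using the marking, and read off $a_m$ from the curvature integral \eqref{eq:Chern-Weil-h}. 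With the sign conventions fixed when the $h_m$ were defined, each $a_m$ should come out to $0$, giving the claimed normalization.
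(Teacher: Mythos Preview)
Your outline for symmetry, the vanishing clauses, and the recursion matches the paper's proof essentially verbatim: symmetry is geometric, vanishing when all $m_i>0$ uses $u_1 u_2 u_3=0$, vanishing for $\sum m_i$ even or $<3$ is a dimension/action count, and the recursion is Proposition~\ref{prop:u-relation}.

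The only place you diverge from the paper is the base case $\langle\Theta(0,1,2)\rangle=1$, and there you make it harder than it is. The dimension count tells you that the contributing moduli space has action $0$, so it consists of \emph{flat} bifold connections on $(S^4,\Theta)$; this is the three-dimensional flag manifold, exactly as in \cite{KM-jsharp}. Because the connections are flat, the curvature integrals in \eqref{eq:Chern-Weil-h} vanish identically, so $\Gamma_\zeta=T_1^0T_2^0T_3^0=1$ for every point $\zeta$ in the cut-down moduli space. The deformed evaluation therefore equals the constant-coefficient one, which is $1$ by the flag-manifold calculation in \cite{KM-jsharp}. There is no need to describe an explicit instanton or carry out a Chern--Weil computation in the style of Lemma~\ref{lem:instanton-degree}: that lemma concerns genuine action-$1/4$ instantons, whereas here everything is flat.

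A small related slip: the constant-coefficient answer $1\in\F$ only says the mod-$2$ count of points is odd, not that there is literally a single trajectory, so you cannot conclude a priori that the deformed evaluation is a single monomial. The flatness observation fixes this too, since every point contributes the monomial $1$ and their sum is just the mod-$2$ count.
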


\begin{proof}
    The symmetry is clear. The assertion that the evaluation is zero
    if all the $m_{i}$ are positive follows from the relation
    $u_{1}u_{2}u_{3}=0$ in Lemma~\ref{lem:three-edge-relations}. The
    assertion that the evaluation is zero if the sum of the $m_{i}$ is
    even follows from the dimension formula \cite{KM-jsharp} and the
    fact that bifold bundles for this foam have action a multiple of
    $1/4$. If the sum of the $m_{i}$ is less than three, then the
    moduli spaces that contribute have negative action, so the
    evaluation is zero. The evaluation for $\Theta(0,1,2)$ holds
    because the moduli space of flat connections is the
    three-dimensional flag manifold, as in \cite{KM-jsharp}.
\end{proof}

\begin{proof}[Proof of Proposition~\ref{prop:V-for-theta}]
Mimicking the calculation for the case of the unknot, we introduce the
half-foams $\Theta^{+}$ and $\Theta^{-}$ as cobordims from $\emptyset$
to $\theta$ and from $\theta$ to $\emptyset$, and we  write
\[
\begin{aligned}
    \mathbf{v}(l,m,n) &= \Jsharp(\Theta^{+}(l,m,n) ; \Gamma) \\ &\in
    \Jsharp(\theta;\Gamma) 
\end{aligned}
\]
and
\[
\begin{aligned}
    \mathbf{a}(l,m,n) &= \Jsharp(\Theta^{-}(m) ; \Gamma) \\ &\in \Hom (
    \Jsharp(\theta;\Gamma), R).
\end{aligned}
\]
The pairing between these can be computed from the lemma, and we can
compute that the pairing between the six elements
\begin{equation}\label{eq:theta-basis}
           \mathbf{v}(0,0,0), \mathbf{v}(0,0,1), \mathbf{v}(0,0,2), 
            \mathbf{v}(0,1,0), \mathbf{v}(0,1,1), \mathbf{v}(0,1,2)
\end{equation}
and the similarly-dotted $\mathbf{a}$'s is upper triangular with
entries $1$ on the diagonal. It follows that these six elements
$\mathbf{v}(l,m,n)$ generate a free submodule of rank $6$. On the
other hand, the representation variety of $\theta$ is the flag
manifold, and the Chern-Simons functional has a perturbation with six
critical points. So (much as in the case of the unknot), we conclude
that $\Jsharp(\theta; \Gamma)$ is free of rank $6$ and the elements
\eqref{eq:theta-basis} are a basis.

Four of the six basis elements $\mathbf{v}(l,m,n)$ have $n>0$, from
which it follows that the rank of $\im(u_{3})$ (and hence also the
rank of $\ker(u_{3}^{2} + P)$ is at least four. The same applies to all
the $u_{i}$ by symmetry. The relation $u_{1} + u_{2} +
u_{3}=0$ from Lemma~\ref{lem:three-edge-relations} implies $u_{1}^{2}
+ u_{2}^{2} + u_{3}^{2}=0$, so if
\[
      x \in \ker (u_{1}^{2} + P) \cap \ker (u_{2}^{2} + P) \cap \ker (u_{3}^{2} + P)
\]
then $Px=0$. It follows that
\[
            \im (u_{1}) \cap \im (u_{2}) \cap \im (u_{3})
\]
also consists of $P$-torsion elements, so this submodule has rank $0$.

If we pass now to the free, $R'$-module $\Jsharp(\theta;\Gamma')$, we
see that each submodule $\im (u_{i})$ has rank at least $4$, and that
the intersection of all three is zero. Since the whole module has rank
$6$, it follows that $ \im (u_{2}) \cap \im (u_{3})$ has rank exactly
$2$, and that
\[
\begin{aligned}
    \im (u_{2}) \cap \im (u_{3}) &= \ker (u_{1}) \\
                                 &= V (\theta; \{ e_{1} \}).
\end{aligned}
\]
In the edge-decomposition, the instanton homology
$\Jsharp(\theta;\Gamma')$ is now the direct sum of the three
rank-2 modules  $V (\theta; \{ e_{i} \})$: 
the other summands are zero because all the rank is accounted for.
\end{proof}

There is an additional relation that can be extracted from this
computation.

\begin{lemma}\label{lem:w2-relation}
   For the operators on $\Jsharp(\theta; \Gamma)$, we have the
   relation
   \[
            u_{2} u_{3} + u_{3} u_{1} + u_{1} u_{2} = P.
   \]
\end{lemma}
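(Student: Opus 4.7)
The plan is to reduce the claim to the localized module $\Jsharp(\theta;\Gamma')$ and then exploit the edge decomposition established in Proposition~\ref{prop:V-for-theta}.

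The first step is to establish the weaker operator identity
\[
(u_2 u_3 + u_3 u_1 + u_1 u_2)\, u_i = P u_i, \qquad i=1,2,3,
\]
on $\Jsharp(\theta;\Gamma)$ itself. Using the commutativity of the dot operators (dots on disjoint faces of a foam cobordism slide past each other), together with the relations $u_1+u_2+u_3 = 0$ and $u_1 u_2 u_3 = 0$ from Lemma~\ref{lem:three-edge-relations} and the cubic relation $u_i^3 = P u_i$ from Proposition~\ref{prop:u-relation}, one computes
\[
(u_2 u_3 + u_3 u_1 + u_1 u_2)\, u_3 = u_1 u_2 u_3 + (u_1+u_2)u_3^2 = u_3\cdot u_3^2 = u_3^3 = P u_3,
\]
and the other two cases follow by symmetry.

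The second step is to pass to $\Jsharp(\theta;\Gamma') = \Jsharp(\theta;\Gamma)\otimes_R R'$ and invoke the decomposition from Proposition~\ref{prop:V-for-theta},
\[
\Jsharp(\theta;\Gamma') = V(\theta;\{e_1\})\oplus V(\theta;\{e_2\})\oplus V(\theta;\{e_3\}).
\]
By the very definition of $V(\theta;s)$ in \eqref{eq:espace-decomp}, the summand $V(\theta;\{e_i\})$ is contained in $\im(u_j)$ for every $j\neq i$; in particular every element of $\Jsharp(\theta;\Gamma')$ lies in $\im(u_1) + \im(u_2) + \im(u_3)$. Combined with the identity from the first step, this shows that the operator $u_2 u_3 + u_3 u_1 + u_1 u_2 - P$ is zero on the localized module.

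Finally, $\Jsharp(\theta;\Gamma)$ is free of rank $6$ over $R$ (again by Proposition~\ref{prop:V-for-theta}), so the natural map to the localization is injective and the relation descends to $\Jsharp(\theta;\Gamma)$. The main obstacle, if any, is the bookkeeping for commutativity of the $u_i$ and for the fact that the localization is injective; the heart of the argument has already been done in setting up the edge decomposition.
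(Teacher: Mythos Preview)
Your proof is correct, and it shares the same overall frame as the paper's: pass to the localization $\Jsharp(\theta;\Gamma')$, use the edge decomposition from Proposition~\ref{prop:V-for-theta} to reduce to the summands $V(\theta;\{e_i\})$, and then use freeness of $\Jsharp(\theta;\Gamma)$ to transport the identity back across the localization. The difference is in how the relation is checked on each summand. The paper works concretely: on $V(\theta;\{e_1\})$ it notes that $u_1=0$, so the left side becomes $u_2u_3$, and then it verifies $u_2u_3\,\mathbf{v}(0,1,n)=P\,\mathbf{v}(0,1,n)$ by pairing with the dual elements $\mathbf{a}(0,0,m)$ and appealing to the closed-foam evaluations of Lemma~\ref{lem:Theta-evals}. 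Your argument instead derives the operator identity $(u_2u_3+u_3u_1+u_1u_2)\,u_i = Pu_i$ purely from the already-established relations $u_1+u_2+u_3=0$, $u_1u_2u_3=0$, and $u_i^3=Pu_i$, and then observes that the edge decomposition forces every element of $\Jsharp(\theta;\Gamma')$ to lie in $\im(u_1)+\im(u_2)+\im(u_3)$. This is a genuinely cleaner route: it bypasses the explicit basis computation with the $\Theta$ foam entirely, and makes it transparent that the relation \eqref{eq:vertex-relation-2} is a formal algebraic consequence of the other relations once the edge decomposition is in hand. The paper's version has the compensating virtue of being self-contained and exhibiting the concrete link to the foam evaluations, but yours is the more efficient argument.
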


\begin{proof}
    Since both are free modules, it is sufficient to check this on the
    $R'$-module $\Jsharp(\theta; \Gamma')$. Using the
    edge-decomposition and symmetry, we may reduce to checking it on
    the summand $V(\theta ; \{e_{1}\})$. This rank-2 submodule is
    $\im(u_{2}) \cap \im(u_{3})$ and is spanned by $\mathbf{v}(0,1,1)$
    and $\mathbf{v}(0,1,2)$. Since $u_{1}=0$ on this summand, the
    operator on the left of the relation in the lemma is just
    $u_{2}u_{3}$. So we are left to check that
    \[
              u_{2} u_{3} \mathbf{v}(0,1,n) = P \, \mathbf{v}(0,1,n)
    \]
    for $n=1$ and $n=2$. This is equivalent to 
    \[
                  \mathbf{v}(0,2,n+1) = P \, \mathbf{v}(0,1,n)
    \]
    A basis for the dual of this summand is
    provided by $\mathbf{a}(0,0,0)$ and  $\mathbf{a}(0,0,1)$. So we
    have to show
   \[
              \left\langle \Theta(0,2,m+1) \right\rangle = P
         \left\langle \Theta(0,1,m) \right\rangle 
   \]
   for $m=1,2$ and $3$. Both sides are zero unless $m=2$, in which
   case both sides are $P$, by Lemma~\ref{lem:Theta-evals}.
\end{proof}

\subsection{Edge-decompositions and 1-sets}

The relation of Lemma~\ref{lem:w2-relation}, for operators on the
theta graph, extends to the case of a general web $K\subset Y$, as
follows.  At each vertex, there are three incident edges
$(e_{1}, e_{2}, e_{3})$, where we allow that two of the three may be
the same.  Let $u_{1}$, $u_{2}$, $u_{3}$ be the corresponding operators on the
$R$-module $\Jsharp(K; \Gamma)$. Then we have the following relations.

\begin{proposition}\label{prop:vertex-relations}
    The three operators $u_{1}$, $u_{2}$, $u_{3}$ on  $\Jsharp(K;
    \Gamma)$ corresponding to the edges incident at a vertex satisfy
    the three relations
    \begin{gather}
        u_{1} + u_{2} + u_{3} = 0 \label{eq:vertex-relation-1} \\
          u_{2} u_{3} + u_{3} u_{1} + u_{1} u_{2} = P \label{eq:vertex-relation-2}\\
        u_{1} u_{2} u_{3} = 0.\label{eq:vertex-relation-3}
    \end{gather}
\end{proposition}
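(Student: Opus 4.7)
The plan is to derive all three relations from a single local analysis at the vertex that refines Lemma~\ref{lem:u-relation-H}: near the vertex the $SO(3)$ bundle splits completely into three line bundles, one per incident edge. First I would work in the $V_{4}$-cover of an orbifold chart centered at the vertex $v$, where $V_{4}$ is the local isotropy group. The fiber of the pulled-back bundle at the preimage $\hat{v}$ of $v$ carries a $V_{4}$-representation; the bifold condition forces it to be non-trivial on each non-identity element $I_{m}$, so the three-dimensional real representation decomposes as a sum of the three non-trivial characters of $V_{4}$. Varying over $\bonf^{*}(Z;\nu\cap Z)$ for a local cylindrical neighborhood $Z$ of $[0,1]\times v$ in a product cobordism, this produces three real line bundles $\mathbb{L}_{1}, \mathbb{L}_{2}, \mathbb{L}_{3}$, with $\mathbb{L}_{m}$ the $+1$-eigenspace of $I_{m}$. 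Restricted to a nearby basepoint $s_{m}$ on the edge $e_{m}$, $\mathbb{L}_{m}$ is precisely the line bundle used in Section~\ref{sec:op-props} to define $u_{e_{m}}$, so $w_{1}(\mathbb{L}_{m})=\mathsf{u}_{m}$.

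Next, because $\tilde{\mathbb{E}}$ near $\hat{v}$ is isomorphic to $\mathbb{L}_{1}\oplus\mathbb{L}_{2}\oplus\mathbb{L}_{3}$, the Whitney sum formula gives a complete factorization
\[
     w(\tilde{\mathbb{E}})=(1+\mathsf{u}_{1})(1+\mathsf{u}_{2})(1+\mathsf{u}_{3}).
\]
Combined with the identification $\mathsf{w}_{i,x}=w_{i}(\tilde{\mathbb{E}})$ of Lemma~\ref{lem:u-relation-H} (with the basepoint taken at the vertex via the $V_{4}$-cover, an extension of the $\mathbb{Z}/2$-cover argument used there), this yields the three cohomological identities
\[
   \mathsf{w}_{1,x} = \mathsf{u}_{1}+\mathsf{u}_{2}+\mathsf{u}_{3},\quad
   \mathsf{w}_{2,x} = \mathsf{u}_{1}\mathsf{u}_{2}+\mathsf{u}_{2}\mathsf{u}_{3}+\mathsf{u}_{3}\mathsf{u}_{1},\quad
   \mathsf{w}_{3,x} = \mathsf{u}_{1}\mathsf{u}_{2}\mathsf{u}_{3}
\]
in the cohomology of $\bonf^{*}(Z;\nu\cap Z)$.

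I would then convert these to operator-level identities on $\Jsharp(\check{Y};\Gamma)$ by the standard gluing argument used in the proof of Lemma~\ref{lem:proto-u-relation}: choose three disjoint sub-bifolds near the vertex to realize the $\mathsf{u}_{m}$ by transverse dual subvarieties, further disjoint sub-bifolds to realize each $\mathsf{w}_{i,x}$, and a codimension-one stratified subvariety bounding the difference to produce the chain homotopy. The outcome is
\[
   w_{1,x}=u_{1}+u_{2}+u_{3},\quad
   w_{2,x}=u_{1}u_{2}+u_{2}u_{3}+u_{3}u_{1},\quad
   w_{3,x}=u_{1}u_{2}u_{3}.
\]
Substituting the values $w_{1,x}=0$, $w_{2,x}=P$ and $w_{3,x}=0$ from Proposition~\ref{prop:w2} yields the three relations \eqref{eq:vertex-relation-1}--\eqref{eq:vertex-relation-3} simultaneously.

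The main obstacle is the first step: verifying that the line bundles $\mathbb{L}_{m}$ arising from the $V_{4}$-equivariant splitting at the vertex really do coincide, as bundles over $\bonf^{*}$, with the line bundles used to define the edge operators $u_{e_{m}}$ at nearby points of the incident edges. This requires a careful chase through the local orbifold charts, the relation between the $\mathbb{Z}/2$-cover on each 2-face and the $V_{4}$-cover at the vertex, and the continuity of the relevant eigenspaces along the edges. Once this local identification is in place, the remaining steps are a direct assembly of ingredients already developed in Section~\ref{sec:op-props} and Proposition~\ref{prop:w2}.
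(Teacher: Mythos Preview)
Your approach is precisely the alternative route that the paper itself sketches in the Remark immediately following Proposition~\ref{prop:vertex-relations}: the splitting of $\tilde{\mathbb{E}}_{x}$ at the vertex into three line bundles does give the cohomological identities $\mathsf{w}_{i,x}=e_{i}(\mathsf{u}_{1},\mathsf{u}_{2},\mathsf{u}_{3})$ you write down, and in principle these can be turned into operator relations. However, you have misidentified the main obstacle. The line-bundle matching you worry about is routine; the real difficulty is the conversion step, and it is \emph{not} a direct application of the argument of Lemma~\ref{lem:proto-u-relation}.

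The problem is that your cohomological identities live in $H^{*}(\bonf^{*}(Z))$ for a region $Z$ containing the vertex, i.e.\ meeting the $V_{4}$ stratum. The compactness statements in Section~\ref{sec:op-props} cover only the cases where $Z$ lies in the smooth part or meets only $\Z/2$ strata; at a $V_{4}$ point the smallest instanton has action $1/4$ (exactly the moduli space analysed in Proposition~\ref{prop:w2}), so bubbling there is codimension~$2$. Thus the one-dimensional moduli spaces $M_{d+1}(\alpha,\beta)\cap W$ used in your chain-homotopy can acquire extra ends from bubbles concentrating at the vertex, and the formula $\partial H+H\partial=\cdots$ no longer follows automatically. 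The paper's Remark flags exactly this: ``the argument needs to be carried out in the space of connections on a neighborhood of a vertex, where codimension-$2$ bubbling can occur, making the proof more difficult.'' Your proposal does not address these extra boundary contributions.

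The paper sidesteps the issue entirely. For \eqref{eq:vertex-relation-1} and \eqref{eq:vertex-relation-3} it invokes Lemma~\ref{lem:three-edge-relations}, whose proof works in a neighbourhood of a $2$-sphere meeting $K$ in three \emph{edge} points, so $Z$ touches only $\Z/2$ strata and the standard compactness applies; the first relation is proved topologically (the three linking tori bound in the smooth part), and the third is then algebraic from $u_{i}^{3}+Pu_{i}=0$. For the middle relation \eqref{eq:vertex-relation-2} the paper first establishes it on $\Jsharp(\theta;\Gamma)$ by direct calculation (Lemma~\ref{lem:w2-relation}) and then transports it to general $K$ by excision, again avoiding any moduli-space argument near a vertex.
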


\begin{proof}
    The first and third relations are special cases of the relations
    of Lemma~\ref{lem:three-edge-relations}. The middle relation
    \eqref{eq:vertex-relation-2} follows from the case of the theta
    graph (Lemma~\ref{lem:w2-relation}) by an application of excision,
    as in \cite[Proposition 5.8]{KM-jsharp}.
\end{proof}

\begin{remark}
    The three relations of this proposition reflect the fact that, at
    a vertex $x$ of the singular set of the orbifold, we have a direct
    sum decomposition of the base-point bundle
    $\tilde{\mathbb{E}}_{x}$ as a sum of three real line bundles,
    \[
         \tilde{\mathbb{E}}_{x} = \mathbb{L}_{1} \oplus
         \mathbb{L}_{2} \oplus  \mathbb{L}_{3},
   \]
   and corresponding relations among the characteristic classes,
   \[
   \begin{aligned}
       \mathsf{u}_{1} + \mathsf{u}_{3} + \mathsf{u}_{3}
       &=  \mathsf{w}_{1,x} \\
 \mathsf{u}_{2}   \mathsf{u}_{3} + \mathsf{u}_{3}   \mathsf{u}_{1} +  \mathsf{u}_{1}   \mathsf{u}_{2}
       &=  \mathsf{w}_{2,x} \\
        \mathsf{u}_{1}   \mathsf{u}_{2}   \mathsf{u}_{3} &= \mathsf{w}_{3,x}. 
   \end{aligned}
   \]
   One could convert these relations among cohomology classes
   (in particular, the second relation) into
   relations between corresponding operators, using the same ideas as
   the proof of Proposition~\ref{prop:u-relation}. This would provide
   an alternative proof of the operator relations. However, the
   argument needs to be carried out in the space of connections on a
   neighborhood of a vertex, where codimsion-2 bubbling can occur,
   making the proof more difficult.
\end{remark}

 We say that a subset $s\subset E(K)$ is a
\emph{$k$-set} (for $k=1$ or $2$) if, at each vertex of $K$, exactly
$k$ of the three incident edges belongs to $s$. The complement of a
$1$-set is a $2$-set. This is standard
terminology, but since our webs are allowed to have (for example) no
vertices, it is possible for a subset $s$ to be both a $1$-set and a
$2$-set. (In particular, both $\emptyset$ and $\{e\}$ are $1$-sets in
the case of the unknot.) A $2$-set is the same as a disjoint union of
cycles which includes every vertex. A $1$-set is also called a
\emph{perfect matching}. Every bridgeless trivalent graph admits a
perfect matching \cite{Petersen}.

\begin{proposition}
    In the edge-decomposition,
    \[
                \Jsharp(K;\Gamma') = \bigoplus_{s\subset E(K)} V(K ; s),
    \]
   the summand $V(K;s)$ is zero if $s$ is not a $1$-set.
\end{proposition}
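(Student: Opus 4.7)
The plan is to fix a vertex $v$ of $K$ with incident edges $e_{1},e_{2},e_{3}$, let $k$ denote the number of these edges lying in $s$, and argue by case analysis on $k$ that $V(K;s)=0$ whenever $k\ne 1$. Since a $1$-set is by definition one for which $k=1$ at every vertex, applying this at any vertex where $s$ fails to be a $1$-set will finish the proof.

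The key inputs I will use are the three vertex relations of Proposition~\ref{prop:vertex-relations}, together with the descriptions of the $u_{e}$ on the summand $V(K;s)$: for $e\in s$ the operator $u_{e}$ acts as $0$, while for $e\notin s$, over $R'$ where $P$ is a unit, $u_{e}^{2}=P$ (since the summand $\im(u_{e})$ coincides with $\ker(u_{e}^{2}+P)$). I will also need that edge operators for distinct edges commute, a fact already used to form the simultaneous eigenspace decomposition~\eqref{eq:espace-decomp}.

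In the case $k=3$, all three $u_{i}$ vanish on $V(K;s)$, so the left-hand side of relation~\eqref{eq:vertex-relation-2} is $0$ while the right-hand side is $P$; since $P$ is invertible this forces $V(K;s)=0$. In the case $k=2$, say with $u_{1}=u_{2}=0$, every term on the left of~\eqref{eq:vertex-relation-2} still carries a factor of $u_{1}$ or $u_{2}$ and so vanishes, giving the same contradiction. In the case $k=0$, relation~\eqref{eq:vertex-relation-1} lets me write $u_{3}=u_{1}+u_{2}$; squaring and using commutativity together with characteristic~$2$ gives $u_{3}^{2}=u_{1}^{2}+u_{2}^{2}=P+P=0$, which contradicts $u_{3}^{2}=P$.

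I do not anticipate any genuine obstacle: once Proposition~\ref{prop:vertex-relations} is in hand the argument is a short case check, whose only mild subtleties are the use of commutativity of edge operators (to split the square of a sum cleanly in characteristic~$2$) and the invertibility of $P$ in $R'$ to convert each relation-violation into the conclusion $V(K;s)=0$.
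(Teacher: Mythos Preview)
Your argument is correct. The case analysis is clean, and each case uses the vertex relations exactly as you describe: for $k=2,3$ the left side of \eqref{eq:vertex-relation-2} vanishes while the right side is the unit $P$, and for $k=0$ squaring \eqref{eq:vertex-relation-1} in characteristic~$2$ (using commutativity of the edge operators) gives $u_{1}^{2}+u_{2}^{2}+u_{3}^{2}=0$, i.e.\ $P+P+P=P=0$ on $V(K;s)$. Nothing is missing.

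The paper takes a different, more constructive route. Rather than arguing by contradiction case-by-case, it builds at each vertex the three operators $\pi_{i}=(1/P)u_{j}u_{k}$ and uses the vertex relations to show they form a complete system of orthogonal idempotents, with $\im(\pi_{i})=\ker(u_{i})\cap\im(u_{j})\cap\im(u_{k})$. This decomposes $\Jsharp(K;\Gamma')$ directly into the three ``local'' summands at that vertex, from which the proposition follows. Your approach is shorter and more elementary for the bare vanishing statement; the paper's approach has the advantage of exhibiting the projections explicitly, which makes the local three-term decomposition \eqref{eq:edge-decomp-local} transparent and might be handy if one wanted to track how elements split across the summands.
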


\begin{proof}
    Let $u_{1}$, $u_{2}$, $u_{3}$ be the operators corresponding to
    the three edges incident at a vertex of $K$. Over the ring $R'$,
    consider the operators
    \[
    \begin{aligned}
        \pi_{1} &= (1/P) u_{2}u_{3} \\
        \pi_{2} &= (1/P) u_{3}u_{1} \\
        \pi_{3} &= (1/P) u_{1}u_{2} 
    \end{aligned}
    \] 
   on $\Jsharp(K ; \Gamma')$.
   From Proposition~\ref{prop:vertex-relations}, we have
   \[
                   \pi_{1}  + \pi_{2} + \pi_{3} = 1 \\
   \]
   and
   \[
               \pi_{i} \pi_{j} = 0
   \]
   for $i\ne j$. From this it follows that the $\pi_{i}$ are
   projections ($\pi_{i}^{2} = \pi_{i}$) and that their images give a
   direct sum decomposition
   \[
              \Jsharp(K ; \Gamma') = \im(\pi_{1}) \oplus  \im(\pi_{2}) \oplus \im(\pi_{3}).
   \]
   It is also clear that $\im(\pi_{1})$ for example is contained in
   $\im(u_{2}) \cap \im(u_{3})$, and also $\im(\pi_{1}) \subset
   \ker(u_{1})$ by another application of
   \eqref{eq:vertex-relation-3}. So we have
   \begin{equation}\label{eq:pi1-summand}
             \im(\pi_{1}) \subset \ker (u_{1}) \cap \im(u_{2}) \cap \im(u_{3}).
   \end{equation}
   The reverse inclusion holds also, because if $x\in \ker (u_{1})$
   then $\pi_{2}x = \pi_{3}x=0$ from which it follows that $x=\pi_{1}x$.
   We learn that
    \begin{multline}\label{eq:edge-decomp-local}
        \Jsharp(K ; \Gamma') = \ker (u_{1}) \cap \im(u_{2}) \cap
        \im(u_{3}) \\ \oplus \ker (u_{2}) \cap \im(u_{3}) \cap \im(u_{1}) \\ \oplus
        \ker (u_{3}) \cap \im(u_{1}) \cap \im(u_{2}).
    \end{multline}
    But the first term
    on the right is the direct sum of all those summands $V(K;s)$ for which $s$
    contains $e_{1}$ but not $e_{2}$ or $e_{3}$; and the sum of the
    three terms subspaces on the right is the sum of all $V(K;s)$ for
    which $s$ contains exactly on of the three edges.
\end{proof}

\begin{corollary}\label{cor:only-1-sets}
    For any web $K$ in $Y$, we have a direct sum decomposition
     \[
                \Jsharp(K;\Gamma') = \bigoplus_{\text{\normalfont $1$-sets $s$}} V(K ; s).
      \]
\qed
\end{corollary}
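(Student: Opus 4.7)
The plan is essentially bookkeeping: the corollary is an immediate consequence of the preceding proposition together with the edge decomposition \eqref{eq:espace-decomp}. Once both of those are in place, there is nothing substantive left to argue.

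First I would recall the edge decomposition \eqref{eq:espace-decomp}, which already expresses $\Jsharp(K;\Gamma')$ as the direct sum of the simultaneous generalized eigenspaces $V(K;s)$ indexed by \emph{all} subsets $s\subset \Edges(K)$. Next I would invoke the preceding proposition, which asserts that $V(K;s)=0$ whenever $s$ fails to be a $1$-set, i.e.\ whenever there is some vertex of $K$ at which the number of incident edges lying in $s$ is different from $1$. Restricting the sum in \eqref{eq:espace-decomp} to those indices for which the summand is nonzero then gives precisely
\[
     \Jsharp(K;\Gamma') = \bigoplus_{\text{$1$-sets } s} V(K;s),
\]
which is the content of the corollary.

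The substantive work is already absorbed into the preceding proposition, which argues locally at each vertex using the projections $\pi_i = (1/P)u_j u_k$ built from the vertex relations of Proposition~\ref{prop:vertex-relations}. For that reason I do not expect any real obstacle at this step. The one point worth noting, to be sure the reduction from ``local at every vertex'' to ``global'' is legitimate, is that the operators $u_e$ for distinct edges commute: this is clear since they are realized by dot insertions on disjoint faces of the product foam. Commutativity guarantees that the joint eigenspace decomposition refines the product of the local decompositions at the vertices, so imposing the $1$-set condition simultaneously at every vertex is exactly what is produced by the intersection of kernels and images used to define $V(K;s)$.
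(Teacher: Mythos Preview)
Your proposal is correct and matches the paper's approach exactly: the corollary is stated with a \qed and no further argument, being an immediate consequence of the edge decomposition \eqref{eq:espace-decomp} together with the preceding proposition that $V(K;s)=0$ for $s$ not a $1$-set. Your additional remark about commutativity of the $u_e$ is accurate but already built into the construction of the edge decomposition, so nothing extra is needed.
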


\begin{corollary}\label{cor:partial-1-sets}
    For any web $K$ in $Y$ and any subset $t\subset \Edges(K)$, we have
     \[
               \bigcap_{e\in t} \ker(u_{e}) = 0
      \]
    in $\Jsharp(K;\Gamma')$ if $t$ is not contained in a
    $1$-set. Similarly,
    \[
               \bigcap_{e\in t} \im(u_{e}) = 0
      \]
   if $t$ is not contained in a $2$-set.
\qed
\end{corollary}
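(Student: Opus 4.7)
The plan is to deduce both statements directly from the edge decomposition of Corollary~\ref{cor:only-1-sets} by analyzing how an individual operator $u_e$ acts on each summand $V(K;s)$.

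First I would observe that, because all the operators $u_{e'}$ commute with one another, each summand $V(K;s)$ in the decomposition $\Jsharp(K;\Gamma')=\bigoplus_{s\ \text{1-set}} V(K;s)$ is invariant under every $u_e$. Next I would examine the restriction $u_e|_{V(K;s)}$ in the two cases. If $e\in s$ then by definition $V(K;s)\subset\ker(u_e)$, so $u_e$ is zero on the summand. If $e\notin s$ then $V(K;s)\subset\im(u_e)$, and over $R'$ the relation $u_e^{3}+Pu_e=u_e(u_e^{2}+P)=0$ together with invertibility of $P$ forces $u_e^{2}=-P=P$ on $\im(u_e)$; in particular $u_e$ is invertible on $V(K;s)$.

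From this, the two intersections decompose along the edge decomposition in a very clean way. On the kernel side, $V(K;s)\cap\ker(u_e)$ equals $V(K;s)$ when $e\in s$ and is $0$ when $e\notin s$, so
\[
   \bigcap_{e\in t}\ker(u_e)\;=\;\bigoplus_{\substack{s\ \text{1-set}\\s\supseteq t}}V(K;s).
\]
On the image side, $u_e$ either annihilates $V(K;s)$ (if $e\in s$) or maps it isomorphically onto itself (if $e\notin s$), so $\im(u_e)=\bigoplus_{s\not\ni e}V(K;s)$ and hence
\[
   \bigcap_{e\in t}\im(u_e)\;=\;\bigoplus_{\substack{s\ \text{1-set}\\s\cap t=\emptyset}}V(K;s).
\]

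To finish, I would simply translate the indexing conditions. The first sum vanishes precisely when no $1$-set contains $t$, which is the hypothesis in the first half of the corollary. For the second sum, a $1$-set $s$ is disjoint from $t$ if and only if $t\subset\Edges(K)\setminus s$; since the complement of a $1$-set is exactly a $2$-set (and conversely every $2$-set has a $1$-set complement at each vertex by \eqref{eq:vertex-relation-1}), the second sum vanishes precisely when $t$ is contained in no $2$-set. No step here is a real obstacle, since the heavy lifting is already in Corollary~\ref{cor:only-1-sets} and in the cubic relation of Proposition~\ref{prop:u-relation}; the only point that deserves a line of justification is the invertibility of $u_e$ on $\im(u_e)$ over $R'$, which is exactly where adjoining $1/P$ is used.
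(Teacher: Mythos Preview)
Your argument is correct and is exactly the intended one: the paper marks this corollary with a bare \qed\ because it is meant to be read off directly from the edge decomposition of Corollary~\ref{cor:only-1-sets}, and you have simply written out that reading in full. The only cosmetic point is that your appeal to \eqref{eq:vertex-relation-1} for ``every $2$-set has a $1$-set complement'' is unnecessary---that bijection is purely combinatorial from the definitions of $1$-sets and $2$-sets---but this does not affect the proof.
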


\subsection{Applications of the exact triangle and excision}

The excision isomorphism which gives rise to the product rule,
Proposition~\ref{prop:tensor-product}, respects the
edge-decompositions of the webs that are involved. As a simple
application, we have the following.

\begin{lemma}\label{lem:0-handle}
Let $K \subset Y$ be a web, and let $s\subset E(K)$ be $1$-set. Let
$K'$ be the union $K\cup U$, where $U$ is an unknot contained in a
ball disjoint from $K$. Let $e$ be the single edge of $U$, and
$s'\subset E(K')$ be the $1$-set $s\cup \{e\}$. Then
\[
         V(K;s) \cong V(K' ; s').
\]
\end{lemma}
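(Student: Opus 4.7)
The plan is to reduce the claim to the split-web tensor-product formula of Proposition~\ref{prop:tensor-product} and then check that the edge decomposition is compatible with that splitting. Since $U$ lies in a ball disjoint from $K$, an embedded $2$-sphere separates $K$ from $U$, so $K' = K\cup U$ is split. By Proposition~\ref{prop:u-matrix-U}, $\Jsharp(U;\Gamma)$ is free of rank $3$ over $R$, so Proposition~\ref{prop:tensor-product} applies, and after inverting $P$ we obtain
\[
   \Jsharp(K';\Gamma') \;\cong\; \Jsharp(K;\Gamma') \otimes_{R'} \Jsharp(U;\Gamma').
\]

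Next I would verify that under this isomorphism the edge operators take their expected form on the tensor product: for an edge $e'$ of $K$, the operator $u_{e'}$ on $\Jsharp(K';\Gamma')$ corresponds to $u_{e'}\otimes 1$, while the operator $u_e$ (for the single edge $e$ of $U$) corresponds to $1\otimes u_e$. This follows from the naturality of the excision isomorphism applied to the split cobordisms defining these operators: the cylindrical dotted foam realizing $u_{e'}$ sits in $[0,1]\times K$, disjoint from $[0,1]\times U$, and conversely. This is essentially the ``split cobordism'' clause of Proposition~\ref{prop:tensor-product}.

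Finally, because $\Jsharp(U;\Gamma')$ is free (hence flat) over $R'$, taking kernels and images of operators of the form $u_{e'}\otimes 1$ or $1\otimes u_e$ commutes with $\otimes_{R'}\Jsharp(U;\Gamma')$, so the simultaneous eigenspace condition defining $V(K';s\cup\{e\})$ factors:
\[
   V(K'; s\cup\{e\}) \;=\; V(K;s)\otimes_{R'} V(U;\{e\}).
\]
By Proposition~\ref{prop:u-matrix-U}, $V(U;\{e\}) = \ker(u_e)\subset \Jsharp(U;\Gamma')$ is a free $R'$-module of rank $1$, so the tensor factor is trivial and $V(K';s\cup\{e\})\cong V(K;s)$. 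The only non-automatic point is verifying that the edge operators split as claimed in the tensor product; but this is forced by the disjoint supports of the defining cobordisms together with the naturality of the Künneth-type isomorphism.
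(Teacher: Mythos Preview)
Your argument is correct and follows the same route as the paper: apply the split-web tensor-product formula (Proposition~\ref{prop:tensor-product}) and use that $V(U;\{e\})$ is free of rank $1$ (Proposition~\ref{prop:u-matrix-U}). The paper simply asserts, in the sentence preceding the lemma, that the excision isomorphism underlying Proposition~\ref{prop:tensor-product} respects the edge decompositions, whereas you spell out why the edge operators split as $u_{e'}\otimes 1$ and $1\otimes u_{e}$ under the K\"unneth isomorphism; note that for $1\otimes u_{e}$ the cleanest justification is the direct-sum decomposition $\Jsharp(U;\Gamma')=\ker(u_{e})\oplus\im(u_{e})$ rather than flatness of the $U$-factor alone.
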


\begin{proof}
    Proposition~\ref{prop:u-matrix-U} tells us that $V(U;\{e\})$ is a
    free module of rank $1$. So we apply
    Proposition~\ref{prop:tensor-product} to obtain
    \[
    \begin{aligned}
        V(K'; s') &\cong V(K; s) \otimes V(U; \{e\} ) \\
        &\cong V(K; s).
    \end{aligned}
   \]
\end{proof}

Consider next the exact triangle of Proposition~\ref{prop:exact-triangle}
for the case of $L_{2}$, $K_{1}$, $K_{0}$. The proof works for any
local system, so the exact sequence holds for the $R'$-modules $\Jsharp(\; \text{---}
\; \Gamma')$. For each of $L_{2}$, $K_{1}$ and $K_{0}$, let $p_{1}$,
\dots, $p_{4}$ be points lying at the indicated locations on the webs,
shown in Figure~\ref{fig:skein-edge}.

\begin{figure}
    \begin{center}
        \includegraphics[scale=.5]{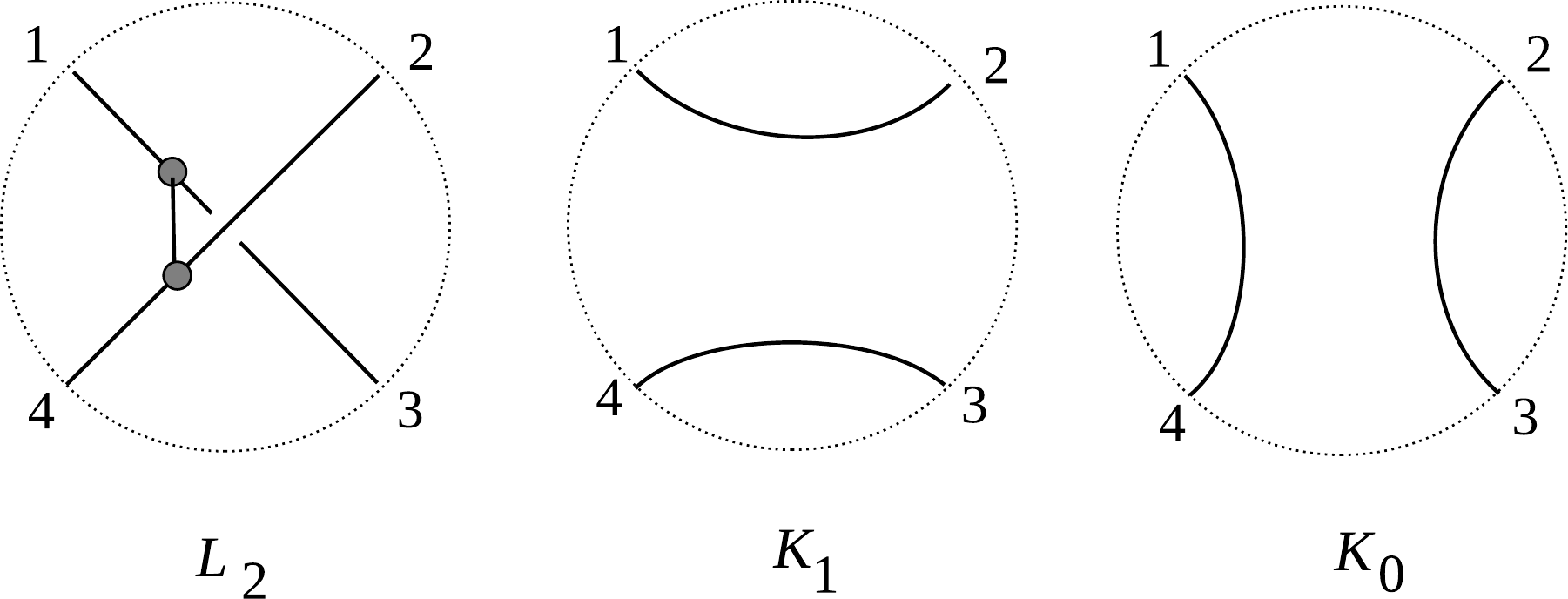}
    \end{center}
    \caption{\label{fig:skein-edge}
    Webs in $Y$ differing inside a ball, with edges labeled.}
\end{figure}

Corresponding to these marked points, we have operators $u_{1}$,
\dots, $u_{4}$ on $\Jsharp(K; \Gamma')$ for each of the three webs,
and the homomorphisms in the exact sequence commute with these
operators, because they come from cobordisms in which the points lie
on common faces. Of course, some of the operators are equal: thus
$u_{1}=u_{2}$, for example, as operators on $\Jsharp(K_{1} ;
\Gamma')$.

For each of $u_{1}$, \dots, $u_{4}$, we have a decomposition of
$\Jsharp(K ; \Gamma')$ into the $\ker(u_{i}) \oplus \im (u_{i})$, and
the long exact sequence of Proposition~\ref{prop:exact-triangle}
decomposes into a direct sum of 16 exact sequences, although many
terms are zero. We give two applications of this idea.

\begin{lemma}\label{lem:iso-1}
    For the webs $K_{1}$ and $K_{2}$, the summands 
    \[
                \bigcap_{i=1}^{4} \ker(u_{i})
    \]
   in $\Jsharp(K_{1}; \Gamma')$ and $\Jsharp(K_{2};\Gamma')$ are isomorphic.
\end{lemma}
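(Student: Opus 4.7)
The plan is to invoke the exact triangle of Proposition~\ref{prop:exact-triangle} for the triple $(L_2, K_1, K_0)$, working over $R'$, and to intersect simultaneously with the kernels of all four operators $u_1, \dots, u_4$, obtaining a ``kernel-summand'' exact triangle relating the three webs. The lemma will follow once I show that the corresponding summand is zero for the third web in the triangle (so that the map between the other two is forced to be an isomorphism).

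First, since each of the four basepoints $p_i$ lies on a face present in every web of the triangle, the connecting maps in the long exact sequence of Proposition~\ref{prop:exact-triangle} intertwine each $u_i$, as these maps come from cobordisms in which the points sit on common product faces. Over $R'$ the minimal polynomial $u(u^2+P)$ factors into coprime pieces, so each $\Jsharp(\,\cdot\,;\Gamma')$ decomposes naturally as $\ker(u_i)\oplus\im(u_i)$, and intersecting these decompositions over $i=1,2,3,4$ yields a splitting of each of the three modules into $16$ summands that is preserved by the exact triangle. In particular, the summands $\bigcap_{i=1}^{4}\ker(u_i)$ from the three webs sit in an exact triangle of their own.

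Second, I would show that $\bigcap_{i=1}^{4}\ker(u_i)$ vanishes for the third web (the one being eliminated). This is exactly the kind of assertion handled by Corollary~\ref{cor:partial-1-sets}: it suffices to verify that the set of edges on which $p_1,\dots,p_4$ lie in that web is \emph{not} contained in any $1$-set of the web. By inspection of Figure~\ref{fig:skein-edge}, the relevant web has an internal vertex at which three of the marked edges meet; since a $1$-set selects exactly one edge at each vertex, those three edges cannot fit inside any single $1$-set, and a fortiori neither can all four. Exactness then forces the map between the remaining two terms to be an isomorphism, which is the conclusion of the lemma.

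The only real obstacle is the combinatorial check in the second step: one needs to read off from the figure the precise incidence pattern of the four marked edges in the web being killed, and exhibit a vertex witnessing the failure of the $1$-set condition. Everything else — commutation of the triangle maps with each $u_i$, the splitting over $R'$, and the reduction to Corollary~\ref{cor:partial-1-sets} — is a formal consequence of the structure already developed, so no further analytic input about moduli spaces is required.
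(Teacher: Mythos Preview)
Your approach is exactly the paper's: pass to the summand $\bigcap_{i=1}^{4}\ker(u_i)$ in the exact triangle $(L_2,K_1,K_0)$ over $R'$, observe that the cobordism maps intertwine the $u_i$, and kill the $L_2$ term using Corollary~\ref{cor:partial-1-sets}. One small correction to your combinatorial description: in $L_2$ the four marked points lie on four \emph{distinct} external edges, and at each of the two internal vertices only \emph{two} of these marked edges are incident (the third incident edge is the unmarked internal one); but two marked edges at a single vertex is already enough to prevent all four from lying in a $1$-set, so the conclusion stands.
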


\begin{proof}
    The corresponding summand of $\Jsharp(L_{2};\Gamma')$ is zero by Corollary~\ref{cor:partial-1-sets},
    because the edges of $L_{2}$ on which the points $p_{1}$, \dots,
    $p_{4}$ lie are not part of a $1$-set. So the exact sequence for
    these summands becomes an isomorphism between the other two terms.
\end{proof}

\begin{lemma}\label{lem:iso-2}
    For the webs $L_{2}$ and $K_{0}$, the summands 
    \[
                \left( \im(u_{1}) \cap \im(u_{4}) \right) \cap  \left( \ker(u_{2}) \cap \ker(u_{3}) \right)
    \]
   in $\Jsharp(L_{2}; \Gamma')$ and $\Jsharp(K_{0};\Gamma')$ are isomorphic.
\end{lemma}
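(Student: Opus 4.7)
The plan is to mimic the strategy used for Lemma~\ref{lem:iso-1}: apply the exact triangle of Proposition~\ref{prop:exact-triangle} to the cyclic sequence
\[
  \cdots \to L_{2} \to K_{1} \to K_{0} \to L_{2} \to \cdots
\]
with coefficients in $\Gamma'$, and then use the commutativity of the connecting homomorphisms with the operators $u_{1},\dots,u_{4}$ to split the long exact sequence into a direct sum of $2^{4}=16$ exact sequences, one for each choice of $\ker(u_{i})$ or $\im(u_{i})$ on each of the four points. Since $\ker(u_{e})$ and $\im(u_{e})$ are complementary summands over $R'$ (the polynomial $u^{3}+Pu=u(u^{2}+P)$ factors into coprime polynomials in $R'[u]$), the intersection $\ker(u_{e})\cap \im(u_{e})$ vanishes in any $\Jsharp(\,\text{---}\,;\Gamma')$. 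To deduce the desired isomorphism between the summands on $L_{2}$ and $K_{0}$, it then suffices to show that the corresponding summand on $K_{1}$ is zero.

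To identify that middle summand, I would look at Figure~\ref{fig:skein-edge}: in the web $K_{1}$ (the intermediate resolution), two of the marked points $p_{1},\dots,p_{4}$ necessarily lie on a common edge, because in each $K_{i}$ the four endpoints of the ball are joined in pairs by two arcs. The labeling is chosen so that in $K_{1}$ the pairing identifies one of $\{u_{1},u_{4}\}$ with one of $\{u_{2},u_{3}\}$; thus as operators on $\Jsharp(K_{1};\Gamma')$ we get, say, $u_{1}=u_{3}$ (or $u_{4}=u_{2}$, etc., depending on the conventions of the figure, but the argument is symmetric in the possibilities).

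Granted such an identification, the summand
\[
  \bigl(\im(u_{1})\cap \im(u_{4})\bigr)\cap \bigl(\ker(u_{2})\cap \ker(u_{3})\bigr)
\]
in $\Jsharp(K_{1};\Gamma')$ contains the intersection $\im(u_{1})\cap \ker(u_{1})$ (coming from $u_{1}=u_{3}$), which is zero by the eigenspace decomposition over $R'$. Hence the $K_{1}$-summand vanishes and the boundary maps in the summand-wise exact triangle give an isomorphism between the corresponding summands on $L_{2}$ and $K_{0}$.

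The main obstacle is simply confirming the combinatorics of the figure, namely verifying that in $K_{1}$ the pairing of endpoints really does identify a ``kernel'' operator with an ``image'' operator in the chosen summand; once that is read off the figure, the rest is a routine application of the exact triangle, exactly parallel to the proof of Lemma~\ref{lem:iso-1}.
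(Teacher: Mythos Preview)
Your argument is essentially the paper's own proof: the paper also shows that the corresponding summand on $K_{1}$ vanishes because two of the marked points lie on a common edge of $K_{1}$, and then reads off the isomorphism from the remaining two terms of the exact triangle. Two small corrections: the paper's figure gives $u_{1}=u_{2}$ on $K_{1}$ (rather than $u_{1}=u_{3}$), and in your last paragraph the summand \emph{is contained in} $\im(u_{1})\cap\ker(u_{1})$ rather than ``contains'' it --- this is what forces it to be zero.
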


\begin{proof}
    The corresponding summand of $\Jsharp(K_{1};\Gamma')$ is zero
    because $u_{1}=u_{2}$, so $\ker(u_{1}) \cap \im(u_{2})=0$. So
    again the exact sequence becomes an isomorphism between the other
    two terms.
\end{proof}

We can draw the following corollaries of these two lemmas, in the
language of the edge-decomposition,
Definition~\ref{def:espace-decomp}.

\begin{figure}
    \begin{center}
        \includegraphics[scale=.5]{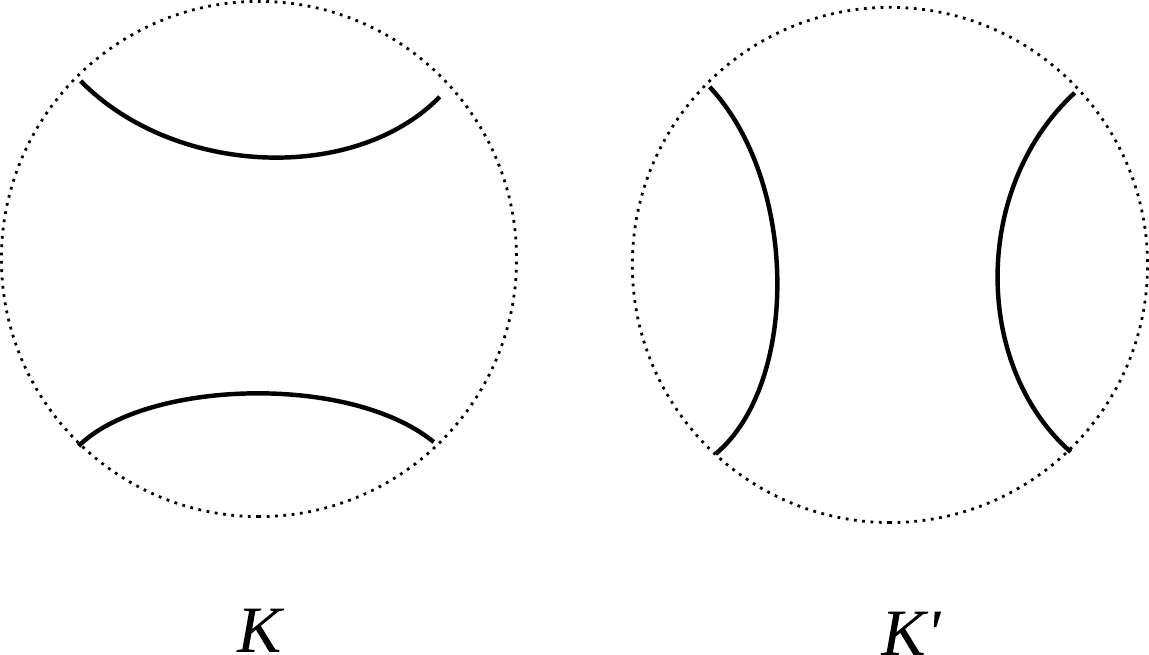}
    \end{center}
    \caption{\label{fig:handle-add}
    Webs $K$ and $K'$ in $Y$ as in Lemma~\ref{cor:handle-add}. The
    edges shown belong to $1$-sets $s$ and $s'$.}
\end{figure}

\begin{corollary}\label{cor:handle-add}
    Let $s$ and $s'$ be $1$-sets for the webs $K$ and $K'$ in
    $Y$. Suppose that $K$ and $K'$ differ only in a ball, as in
    Figure~\ref{fig:handle-add}, and that the edges of $K$ and $K'$ which meet the
    ball belong to $s$ and $s'$ respectively. Then
    \[
                    V(K; s) \cong V(K' ; s').
    \]
\end{corollary}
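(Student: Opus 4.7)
The plan is to realize the local move of Figure~\ref{fig:handle-add} as a piece of the skein triple of Figure~\ref{fig:skein-edge}, and then to apply the appropriate edge-decomposition isomorphism of Lemma~\ref{lem:iso-1} or Lemma~\ref{lem:iso-2}. First I would identify the ball of Figure~\ref{fig:handle-add} with the ball of Figure~\ref{fig:skein-edge}, so that $K$ and $K'$ appear as a matching pair among $L_2$, $K_1$, $K_0$ (cyclically permuting the indices of the triple in Proposition~\ref{prop:exact-triangle} if necessary). The four edges meeting the ball then correspond to the marked points $p_1,\ldots,p_4$, and the hypothesis that these edges belong to $s$ and $s'$ pins down a pattern of kernel/image conditions on the operators $u_1,\ldots,u_4$. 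One then checks that this pattern is precisely the one appearing in one of the two lemmas.

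The chosen lemma supplies an $R'$-module isomorphism between the corresponding summands in $\Jsharp(K;\Gamma')$ and $\Jsharp(K';\Gamma')$. By Corollary~\ref{cor:only-1-sets}, each of these summands further decomposes as a direct sum of $V(K;\tilde s)$ (resp.\ $V(K';\tilde s')$) over all $1$-sets $\tilde s$, $\tilde s'$ which extend the prescribed pattern on the four ball-edges. To obtain $V(K;s)\cong V(K';s')$ for the specific pair in the statement, one must argue that the lemma's isomorphism respects this finer decomposition. This is where naturality enters: the cobordism maps in Proposition~\ref{prop:exact-triangle} are induced by foams that are products outside the ball, hence they commute with $u_e$ for every edge $e$ disjoint from the ball. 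Consequently they preserve the joint generalized eigenspace decomposition with respect to all such exterior edge operators, and the isomorphism restricts to an isomorphism $V(K;s)\cong V(K';s')$ whenever $s$ and $s'$ agree outside the ball and match the local pattern inside.

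The main obstacle is verifying this equivariance of the skein-triangle maps with respect to the exterior operators $u_e$. In the construction of the exact triangle in \cite{KM-jsharp-triangles}, the cobordism foams are products of $[0,1]$ with the part of $K$ outside the ball, and the dot operators $u_e$ for exterior edges are realized as dots on these product faces; commutativity with the triangle maps is then essentially tautological, though it deserves a sentence of justification. Once this equivariance is in place, the remaining content is combinatorial: the matching of the two local patterns inside the ball, coupled with identity outside, produces a bijection between $1$-sets $s$ on $K$ and $1$-sets $s'$ on $K'$ satisfying the stated boundary condition, and the isomorphism of summands is carried along this bijection.
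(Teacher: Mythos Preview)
Your approach is essentially the paper's, which records only that the corollary is ``an immediate consequence of the definition of $V(K;s)$ and Lemma~\ref{lem:iso-1}.'' You have correctly singled out Lemma~\ref{lem:iso-1} (the $\bigcap_i \ker(u_i)$ case, since the ball-edges lie in the $1$-sets) and made explicit the point the paper leaves implicit: that the exact-triangle maps commute with the exterior edge operators $u_e$ because the foams are products outside the ball, so the isomorphism of Lemma~\ref{lem:iso-1} refines to an isomorphism on each simultaneous generalized eigenspace $V(K;s)\cong V(K';s')$.
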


\begin{proof}
    This is an immediate consequence of the definition of $V(K;s)$ and Lemma~\ref{lem:iso-1}.
\end{proof}

The second corollary is a little more elaborate, but still straightforward.

\begin{figure}
    \begin{center}
        \includegraphics[scale=.5]{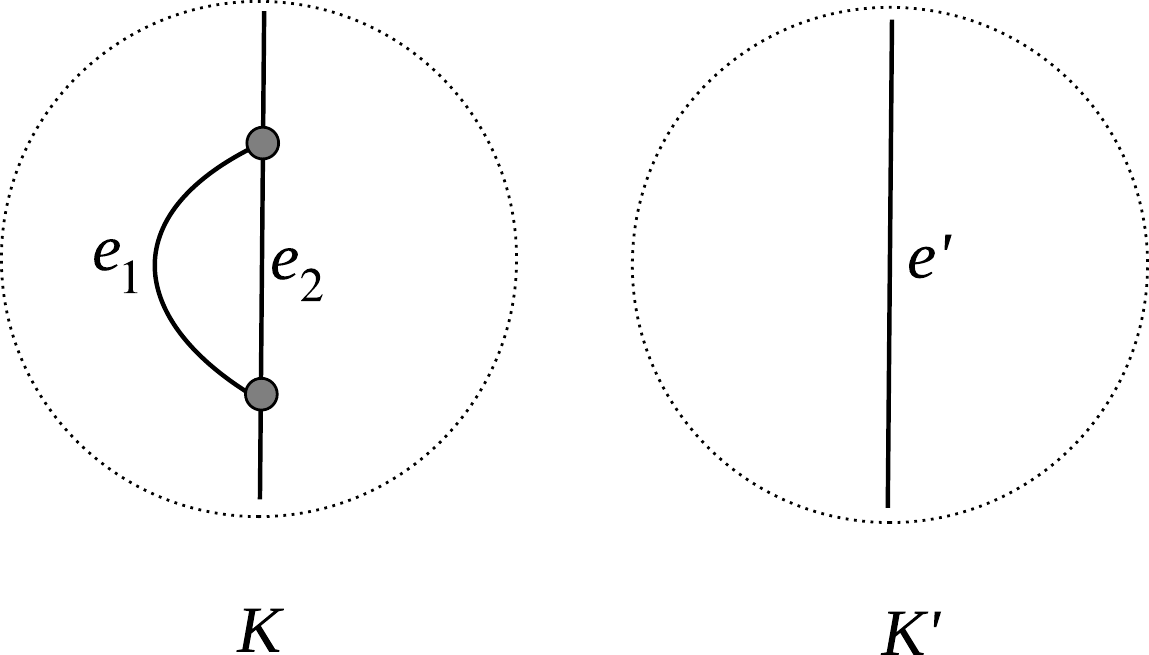}
    \end{center}
    \caption{\label{fig:bigon-add}
    Webs $K$ and $K'$ in $Y$ as in Lemma~\ref{cor:bigon-add}. The
    edge $e_{1}$ belongs to the $1$-set $s$. The other edges do not. }
\end{figure}

\begin{corollary}\label{cor:bigon-add}
    Let $s$ and $s'$ be $1$-sets for the webs $K$ and $K'$ in
    $Y$. Suppose that $K$ and $K'$ differ only in a ball, as in
    Figure~\ref{fig:bigon-add}. Suppose that the edge $e_{1}$ of $K$
    belongs to $s$ (so that the other edges of $K$ the figure do not).
    Suppose that the edge $e'$ of $K'$ does not belong to
    $s'$. Then
    \[
                    V(K; s) \cong V(K' ; s').
    \]
\end{corollary}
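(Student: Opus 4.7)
The plan is to mimic the proof of Corollary~\ref{cor:handle-add}, using Lemma~\ref{lem:iso-2} in place of Lemma~\ref{lem:iso-1}. I would apply the skein exact triangle of Proposition~\ref{prop:exact-triangle} inside a ball containing the bigon of $K$ and the single edge of $K'$, arranging the identification of $K$ and $K'$ with two of the three webs $L_{2},K_{1},K_{0}$ so that the third term in the triangle has vanishing $V$-summand by the same argument that kills the $K_{1}$-summand in the proof of Lemma~\ref{lem:iso-2}.

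The main step is translating the hypothesis ``$e_{1}\in s$ with the other edges of the figure not in $s$'' on $K$, and ``$e'\notin s'$'' on $K'$, into the $\ker/\im$ pattern $(\im(u_{1})\cap\im(u_{4}))\cap(\ker(u_{2})\cap\ker(u_{3}))$ that appears in Lemma~\ref{lem:iso-2}. To do this I would place the four marked points of the skein on the bigon edges of $K$ (and at the corresponding positions on $e'$ for $K'$), then use the $1$-set condition at each trivalent vertex inside the bigon to propagate the labels: the assumption that $e_{1}$ is the only edge of the bigon region in $s$ forces a unique assignment of $\ker$ and $\im$ to the four operators at these points. For $K'$, several of these marked points lie on the common edge $e'$, so the corresponding operators coincide and the four-fold pattern collapses to the single condition $u_{e'}\in\im$, which matches the hypothesis $e'\notin s'$.

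The hard part will be the bookkeeping: confirming that with the specified identification the third web in the triangle really does have vanishing $V$-summand (so that the long exact sequence for the local-coefficient $\Jsharp(\,\text{---}\,;\Gamma')$ becomes an isomorphism between the relevant direct summands), and that the $1$-set conditions on the edges \emph{outside} the ball match trivially between $K$ and $K'$ via excision. Once these checks are in place, Lemma~\ref{lem:iso-2}, combined with the excision already used in its proof, delivers the desired isomorphism $V(K;s)\cong V(K';s')$.
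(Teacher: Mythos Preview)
Your approach has a real gap at the ``collapse to $\im$'' step. When you place the skein ball as a neighborhood of the bigon edge $e_{2}$, the four marked points $p_{1},\dots,p_{4}$ land on the two external edges and on the two ends of $e_{1}$. Since $e_{1}\in s$ while the external edges are not, the summand $V(K;s)$ corresponds to the mixed pattern of Lemma~\ref{lem:iso-2}, namely $\im(u_{1})\cap\ker(u_{2})\cap\ker(u_{3})\cap\im(u_{4})$. Now in the resolution that gives $K'$ directly --- this is $K_{1}$ in the triangle --- all four marked points lie on the single edge $e'$, so $u_{1}=u_{2}=u_{3}=u_{4}=u_{e'}$. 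The pattern does not collapse to $\im(u_{e'})$ as you claim; it becomes $\im(u_{e'})\cap\ker(u_{e'})=0$. Thus it is precisely the $K'$-term whose summand vanishes in the exact triangle, not some third term, and you cannot read off $V(K';s')$ from it.

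The paper's proof handles this by passing to the \emph{other} resolution, $K_{0}$, which is not $K'$ but rather $K'\cup U$: a split unknot appears when the two ends of $e_{1}$ are joined to each other. On $K_{0}$ the points $p_{2},p_{3}$ lie on $U$ and $p_{1},p_{4}$ lie on $e'$, so the mixed pattern now reads $e'\notin s'$ together with $e_{U}$ in the extended $1$-set --- exactly $V(K'\cup U;\,s'\cup\{e\})$. Lemma~\ref{lem:iso-2} therefore gives $V(K;s)\cong V(K'\cup U;\,s'\cup\{e\})$, and the extra unknot is removed by Lemma~\ref{lem:0-handle}. Your sketch is missing both the appearance of the split unknot and the invocation of Lemma~\ref{lem:0-handle}.
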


\begin{proof}
     Set  $L_{2}=K$ and  apply Lemma~\ref{lem:iso-2} to the ball
    which forms a neighborhood of the edge $e_{2}$. Then the 
    web $K_{0}$ in  Lemma~\ref{lem:iso-2} becomes a union of $K'$ and
    an unknotted circle $U$. Writing $e$ for the single edge of $U$,
    we learn that
    \[
         V(K; s) = V(K' \cup U ; s' \cup \{e\}) .
    \]
    Now apply Lemma~\ref{lem:0-handle}.
\end{proof}

We can summarize Corollary~\ref{cor:handle-add}, Corollary~\ref{cor:bigon-add} and
Lemma~\ref{lem:0-handle} together as saying that $V(K,s)$ is unchanged
if we alter only the edges of $K$ that belong to $s$, by the addition
of 1-handles, 0-handles, or 2-handles (saddle-moves, and births and
deaths of circles).  

\begin{proposition}\label{prop:s-homology}
    Let $(K,s)$ and $(K', s')$ be two webs in $Y$, each equipped with
    $1$-sets $s\subset E(K)$ and $s'\subset E(K')$. Let
    \[
    \begin{aligned}
        C &= \bigcup_{e\in E(K)\setminus s} \bar e \\
        C' &= \bigcup_{e'\in E(K')\setminus s'} \bar e'
    \end{aligned}
     \]
    be the closed loops in $Y$ formed by the edges of the
    complementary $2$-sets. Suppose that $C=C'$ as subsets of $Y$, and
    that the $1$-sets $s$ and $s'$ define the same relative homology
    class in $H_{1}(Y,C; \Z/2)$. Then
    \[
               V(K, s) \cong V(K', s').
    \]
\end{proposition}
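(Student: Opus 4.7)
The plan is to interpret each $1$-set $s$ for a web $K$ geometrically. Because exactly one of the three edges at each vertex of $K$ lies in $s$, the edges of $s$ form a properly embedded $1$-submanifold of the pair $(Y, C)$: a disjoint union of arcs with endpoints at the vertices of $K$ (all of which lie on $C$), together with any closed-loop edges of $s$. The $\Z/2$-fundamental class of this $1$-submanifold in $H_1(Y, C; \Z/2)$ is precisely the class used in the proposition, so the hypothesis becomes that $s$ and $s'$ are cobordant as $\Z/2$-cycles in $(Y, C)$.

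First I would realize the cobordism by an embedded surface $F \subset [0,1] \times Y$ with
\[
    F \cap (\{0\}\times Y) = \{0\}\times s, \qquad
    F \cap (\{1\}\times Y) = \{1\}\times s',
\]
and with $\partial F \setminus (\{0,1\}\times Y) \subset [0,1] \times C$. After a generic perturbation $F$ is transverse to $[0,1] \times C$, the function $t|_F$ is Morse on the interior of $F$, and its restriction to the boundary stratum $F \cap ([0,1]\times C)$ is also Morse. The slice $s_t := F \cap (\{t\}\times Y)$ then provides a family of $1$-submanifolds of $(Y, C)$ interpolating between $s$ and $s'$, whose isotopy type changes at finitely many times through one of three elementary events: an interior $0$- or $2$-handle of $F$, producing birth or death of an unknotted closed loop of $s$ in the complement of $C$; an interior $1$-handle of $F$, producing a saddle move on two arcs of $s$ away from $C$; or a boundary extremum of $t|_{F\cap([0,1]\times C)}$, producing the birth or absorption of a small arc of $s$ with both endpoints on $C$ that bounds a disk together with an arc of $C$.

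I would then invoke the three preceding results, each of which handles exactly one of these cases. Lemma~\ref{lem:0-handle} handles the birth/death of an unknotted loop. Corollary~\ref{cor:handle-add} handles the interior saddle move. Corollary~\ref{cor:bigon-add} handles the bigon creation/absorption along $C$. Isotopies of $s_t$ between successive events induce the evident isomorphisms on $V$ by functoriality of $\Jsharp(\;\text{---}\;;\Gamma')$ under product cobordisms. Composing the resulting isomorphisms along the Morse decomposition of $F$ gives the desired identification $V(K, s) \cong V(K', s')$.

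The main obstacle is to confirm that a generic cobordism produces \emph{only} the three local models above. The most delicate point is to verify that no interior $1$-handle of $F$ has a foot on $\partial s_t \subset C$; this is ruled out by transversality, since the descending arc of such a $1$-handle is a generic arc in the $3$-manifold slice $\{t\}\times Y$ and so is disjoint from the $1$-dimensional set $\{t\}\times C$. A careful local analysis of $F$ near the boundary stratum $[0,1]\times C$ is required to confirm that boundary tangencies match the bigon model of Corollary~\ref{cor:bigon-add}, but once this is done the three cited lemmas exhaust all the local moves that can occur.
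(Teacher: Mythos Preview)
Your proposal is correct and follows the same approach as the paper: the paper's proof is the single sentence ``The homology between $s$ and $s'$ is a composition of isotopies, births, deaths and saddle moves, and the addition and subtraction of bigons as in Corollary~\ref{cor:bigon-add},'' and your argument is a careful Morse-theoretic justification of exactly that decomposition, invoking the same three preceding results (Lemma~\ref{lem:0-handle}, Corollary~\ref{cor:handle-add}, Corollary~\ref{cor:bigon-add}) for the same three elementary moves.
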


\begin{proof}
    The homology between $s$ and $s'$ is a composition of isotopies,
    births, deaths and saddle moves, and the addition and subtraction
    of bigons as in Corollary~\ref{cor:bigon-add}.
\end{proof}

\subsection{Calculation for planar webs}

We now turn to the special case of webs $K$ in the plane
$\R^{2}\subset \R^{3}$. Let $s$ be a $1$-set for $K$, and let $C$ be
the union of closed cycles formed by the complementary $2$-set.  We say that $s$
is \emph{even} if its homology class in $H_{1}(\R^{3}, C ;\Z/2)$ is
zero. This is equivalent to saying that $s$ has an even number of
endpoints on each of the connected components of $C$.

\begin{proposition}
    Let $K$ be a planar web and let $s$ be an even $1$-set. Then $V(K,
    s)$ is a free $R'$-module of rank $2^{n}$, where $n$ is the number of
    components in $C$. If $s$ is not even, then $V(K,s)=0$. 
\end{proposition}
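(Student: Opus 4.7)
My plan is to use Proposition~\ref{prop:s-homology} to replace $(K,s)$ by a canonical representative depending only on the pair $(C,[s])$, and then to compute $V$ directly for that representative.

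For the even case $[s]=0$, take $(K',s')=(C,\emptyset)$: the web $K'=C$ is a disjoint union of $n$ planar unknots with no vertices, $\emptyset$ is vacuously a $1$-set for it, and one has $C'=C$ and $[s']=0$. Proposition~\ref{prop:s-homology} then gives $V(K,s)\cong V(C,\emptyset)$. By Proposition~\ref{prop:u-matrix-U}, $V(U,\emptyset)=\im(u_e)$ is a free $R'$-module of rank $2$, so iterating the product formula of Proposition~\ref{prop:tensor-product} across the splitting of $C$ into its $n$ components yields
\[
V(C,\emptyset)\cong \bigotimes_{i=1}^{n}V(U_i,\emptyset)\cong (R')^{2^{n}}.
\]

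For the non-even case $[s]\neq 0$, the number $k$ of components of $C$ with odd endpoint parity is positive and even. Using Proposition~\ref{prop:s-homology} together with the handle and bigon moves of Lemma~\ref{lem:0-handle}, Corollary~\ref{cor:handle-add}, and Corollary~\ref{cor:bigon-add}, I reduce $(K,s)$ to a minimal representative that splits in $Y$ as a disjoint union of $k/2$ \emph{dumbbells} (two unknots joined by a single bridge, each circle acquiring a single vertex at the bridge's endpoint) and $n-k$ clean circles. Proposition~\ref{prop:tensor-product} then reduces the vanishing statement to the single claim $V(\text{dumbbell},\{e\})=0$, where $e$ is the bridge. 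For the dumbbell, the vertex relation~\eqref{eq:vertex-relation-1} at either endpoint of $e$ reads $u_{a}+u_{a}+u_{e}=0$, since the loop $a$ contributes two of the three incidences; hence $u_{e}=0$ as an operator. Because $\{e\}$ is the only $1$-set of the dumbbell, the edge decomposition collapses to $V(\text{dumbbell},\{e\})=\Jsharp(\text{dumbbell};\Gamma')$, and the task is now to show $\Jsharp(\text{dumbbell};\Gamma')=0$.

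To complete this last step, I would apply the exact triangle of Proposition~\ref{prop:exact-triangle} to a ball enclosing the bridge and its two endpoints: two of the three webs appearing in the triangle simplify to configurations whose $\Jsharp(\, \cdot\, ;\Gamma')$ is computable from the product formula and the earlier calculations for the unknot and theta graph (Propositions~\ref{prop:u-matrix-U} and~\ref{prop:V-for-theta}), and the connecting maps identify $\Jsharp(\text{dumbbell};\Gamma')$ as a subquotient that must vanish. The main obstacle lies in this final step, together with the prior reduction: the vertex relation only forces $u_{e}=0$ without visibly collapsing the Floer complex, so closing the argument requires either a careful analysis of the exact triangle or a direct inspection of the perturbed flat moduli space for the dumbbell, and one also needs to verify that the handle and bigon moves really do consolidate any odd-length cycle of $C$ down to a length-one loop while preserving both $C$ and the class $[s]$.
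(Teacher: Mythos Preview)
Your treatment of the even case is correct and essentially identical to the paper's.

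For the non-even case, the paper's argument is both simpler and avoids the gap you identify at the end. Rather than pairing \emph{all} odd components into dumbbells, the paper singles out one component $C_{1}\subset C$ on which $s$ has odd endpoint parity, and uses Proposition~\ref{prop:s-homology} to replace $(K,s)$ by $(K',s')$ where $s'$ has exactly one endpoint on $C_{1}$ and $C_{1}$ bounds a disk disjoint from the rest of $K'$. The unique edge of $s'$ meeting $C_{1}$ is then an \emph{embedded bridge}: a $2$-sphere enclosing $C_{1}$ meets $K'$ transversely in a single point. This forces the bifold representation variety of $K'$ to be empty, so $\Jsharp(K';\Gamma')=0$ outright, and $V(K',s')=0$ follows a fortiori.

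This is exactly the observation you are missing. Your dumbbell is what the paper elsewhere calls the ``standard handcuffs,'' and its middle edge $e$ is already an embedded bridge in the above sense; the vanishing $\Jsharp(\text{dumbbell};\Gamma')=0$ is immediate because there are no flat connections, with no need for the exact triangle. More to the point, a single bridge anywhere in $K'$ kills the entire module, so there is no need to pair up the remaining odd components, achieve a split configuration, or invoke the product formula at all. Your route can be completed, but only by importing the bridge criterion---at which point most of the scaffolding becomes superfluous.
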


\begin{proof}
    If $s$ is even, then we can apply
    Proposition~\ref{prop:s-homology} to see that $V(K,s)$ is
    isomorphic to $V(K', \emptyset)$, where $K'$ is the disjoint union
    of the circles that comprise $C$, with no vertices. Since $K$ is
    planar, $K'$ is an unlink. If it has $n$ components, then by the
    product formula in Proposition~\ref{prop:tensor-product} and the
    unknot calculation in Proposition~\ref{prop:u-matrix-U}, we have
    \[
    \begin{aligned}
        V(K', \emptyset) &= V(U, \emptyset)^{\otimes n} \\
        &= (R' \oplus R')^{\otimes n}.
    \end{aligned}
     \]
    This establishes the first claim.

    For the second claim, suppose $s$ is not even, and let $C_{1}
    \subset C$ be a connected component on which $s$ has an odd number
    of endpoints. Then, using Proposition~\ref{prop:s-homology}, we
    can replace $(K,s)$ with $(K', s')$,  where the cycles of the
    complementary $2$-set are unchanged, but $s'$ has exactly one
    endpoint on $C_{1}$, and $V(K,s) = V(K', s')$. Furthermore, we can
    arrange that $C_{1}$ bounds a disk that is disjoint from the rest
    of $K'$. In the language of \cite{KM-jsharp}, the web $K'$ now has
    an embedded bridge: that is, there is a 2-sphere in $\R^{3}$
    meeting $K'$ transversely in a single point. It follows that the
    bifold representation variety of $K'$ is empty and $\Jsharp(K'
    ;\Gamma')=0$. So $V(K', s') = 0$ a fortiori. 
\end{proof}

\begin{corollary}
    For a planar web $K$, the $R'$-module $\Jsharp(K, \Gamma')$ is a
    free module of rank
    \[
                        \sum_{s \in \{\text{\normalfont even $1$-sets}\}} 2^{n(s)}
   \]
   where, for each $s$, the number $n(s)$ is the number of cycles in
   the complementary $2$-set. \qed
\end{corollary}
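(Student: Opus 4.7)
The plan is to combine two results that have just been established: the edge decomposition from Corollary~\ref{cor:only-1-sets} and the preceding proposition classifying the nonzero summands in the planar case. Concretely, Corollary~\ref{cor:only-1-sets} gives a direct sum decomposition
\[
     \Jsharp(K;\Gamma') = \bigoplus_{\text{$1$-sets $s$}} V(K;s),
\]
valid for any web $K\subset Y$. When $K$ is planar, the preceding proposition identifies each summand: $V(K;s)$ is a free $R'$-module of rank $2^{n(s)}$ when $s$ is an even $1$-set, and $V(K;s)=0$ otherwise.

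The plan is therefore simply to substitute the second fact into the first. Only the even $1$-sets contribute, so the decomposition collapses to
\[
     \Jsharp(K;\Gamma') = \bigoplus_{\text{even $1$-sets $s$}} V(K;s),
\]
and since a direct sum of finitely many free $R'$-modules is again free, with rank equal to the sum of the ranks of its summands, we conclude that $\Jsharp(K;\Gamma')$ is a free $R'$-module of rank
\[
     \sum_{\text{even $1$-sets $s$}} 2^{n(s)},
\]
as asserted.

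There is no real obstacle in this step; the corollary is a formal consequence of the two inputs. All the substantive content (the edge decomposition, the vanishing for non-even $s$ via the bridge argument, and the product formula with copies of the unknot that yields the $2^{n(s)}$ factors) has already been handled in the preceding propositions.
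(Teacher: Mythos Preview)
Your proposal is correct and matches the paper's intended argument exactly: the corollary is marked with \qed\ in the paper because it is an immediate consequence of combining the edge decomposition of Corollary~\ref{cor:only-1-sets} with the preceding proposition computing each $V(K;s)$ in the planar case, which is precisely what you do.
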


\begin{corollary}
    For a planar web $K$, the rank of the $R'$-module $\Jsharp(K,
    \Gamma')$ is equal to the number of Tait colorings of $K$.
\end{corollary}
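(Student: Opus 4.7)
The plan is to match the formula provided by the preceding corollary,
\[
\mathrm{rank}_{R'}\, \Jsharp(K;\Gamma') \;=\; \sum_{\text{even $1$-sets $s$}} 2^{n(s)},
\]
to the number of Tait colorings of $K$ by a direct combinatorial bijection. A Tait coloring is an assignment of one of three colors $\{1,2,3\}$ to each edge of $K$ so that all three colors appear at every vertex; equivalently, it is an ordered triple $(M_{1},M_{2},M_{3})$ of perfect matchings partitioning the edges. I would encode a Tait coloring by the pair $(M_{1}, c)$, where $c$ is the restriction of the coloring to the complement $E(K)\setminus M_{1}$ with values in $\{2,3\}$.

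First I would observe that in any Tait coloring the color-$1$ edges $M_{1}$ form a $1$-set whose complementary cycles all have even length. Indeed, at each vertex the two edges not in $M_{1}$ must carry the two distinct colors $2$ and $3$, so $c$ alternates as one traverses each cycle of the complementary $2$-set, forcing that cycle to have even length. Conversely, given a $1$-set $s$ all of whose complementary cycles have even length, each cycle admits exactly two proper alternating $\{2,3\}$-colorings, and coloring $s$ by color $1$ and the complement by any such choice yields a Tait coloring. Hence the number of Tait colorings equals
\[
\sum_{\text{$1$-sets $s$ with all complementary cycles of even length}} 2^{n(s)}.
\]

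It then remains to identify the combinatorial condition ``all complementary cycles have even length'' with the paper's condition that $s$ is even, i.e.\ has an even number of endpoints on each connected component $C_{i}$ of $C$. Since the complement $E(K)\setminus s$ is $2$-regular, every vertex of $K$ lies on a unique cycle of the complement; and since $s$ is a $1$-set, every vertex of $K$ is an endpoint of exactly one edge of $s$. Therefore the number of endpoints of $s$ on $C_{i}$ equals the number of vertices of $K$ on $C_{i}$, which in turn equals the length of the cycle $C_{i}$. The two conditions match, so the combinatorial sum above is exactly the rank formula of the preceding corollary, finishing the proof.

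There is no real obstacle: all of the analytic and gauge-theoretic content is already encapsulated in the preceding corollary, and the planarity hypothesis enters only through that earlier step (via the unlink reduction and the product formula). The only point in the present argument that requires any care is the equivalence of the homological ``even'' condition with the combinatorial length-parity of the complementary cycles, which is the short parity count above.
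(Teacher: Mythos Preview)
Your argument is correct and follows essentially the same approach as the paper's proof: both establish the elementary bijection between Tait colorings and pairs consisting of an even $1$-set together with a choice of alternating $2$-coloring on each complementary cycle. The paper states this as an ``elementary fact'' with a one-sentence sketch, while you spell out more carefully the equivalence between the homological definition of ``even'' and the parity of the lengths of the complementary cycles (via the observation that the number of endpoints of $s$ on a cycle $C_i$ equals the number of vertices on $C_i$).
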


\begin{proof}
    It is an elementary fact that the number of Tait colorings is
    equal to the sum that appears in the previous corollary. Indeed,
    given a
    Tait coloring, the edges colored ``red'' are  a $1$-set, and to
    complete the coloring we must alternate ``blue'' and ``green''
    along the cycles of the complementary $2$-set. This will not be
    possible if the $1$-set is not even, and can be done in $2^{n(s)}$
    ways when it is.
\end{proof}

Finally, we return to the ring $R$ and the local system $\Gamma$. We
do not know whether $\Jsharp(K; \Gamma)$ is a free $R$-module, but we
do know that its rank is the same as that of the $R'$-module
$\Jsharp(K; \Gamma')$. This is because both ranks are equal to the dimension of the
$F$-vector space $\Jsharp(K; \Gamma\otimes_{R} F)$, where $F$ is the
field of fractions of both $R$ and $R'$. This proves
Theorem~\ref{thm:deformed-is-Tait}. \qed

\section{Comparison of the deformed and undeformed homology}

\subsection{The spectral sequence  of the $\fm$-adic filtration} 
\label{sec:spectral-sequence}

Recall that $R$ is the ring of finite Laurent series in variable
$T_{1}$, $T_{2}$, $T_{3}$, and that $\Gamma$ is a local system of free
rank-1 $R$-modules on $\bonf^{\sharp}(\check Y)$. Let $\sm\ideal R$ be
the ideal generated by
\[
    \{ \, T_{i} - 1 \mid i=1,2,3 \,\}, 
\]
let $R_{\sm}$ be the localization of $R$ at this maximal ideal: the
ring of rational functions whose denominator is non-zero at
$(1,1,1)$. Let $\fm \ideal R_{\sm}$ be the unique maximal ideal in the
localization. Let $\Gamma_{m} = \Gamma\otimes_{R} R_{m}$ be the local
system of $R_{m}$-modules obtained from $\Gamma$. 

Using the local system $\Gamma_{m}$ in place of $\Gamma$, we can form
the instanton homology group $\Jsharp(\check Y; \Gamma_{m})$. Its rank
as an $R_{m}$-module is equal to the rank of $\Jsharp(\check Y;
\Gamma_{m})$ as an $R$-module, because $R_{m}$ and $R$ have the same
field of fractions. In particular, if $\check Y = (S^{3}, K)$ and $K$
is planar, then the rank is equal to the number of Tait colorings of
$K$. 

Write
\begin{equation}\label{eq:Csharpm}
    \begin{aligned}
        C(\Gamma_{m}) &= C^{\sharp}(\check Y ; \Gamma_{m}) \\ &=
        \bigoplus_{\beta} \Gamma_{m,\beta}
    \end{aligned}
\end{equation}
for the chain complex whose homology is $\Jsharp(\check Y ;
\Gamma_{m})$, as in \eqref{eq:Csharp}. Although the terminology
``complex'' is traditional, it should be remembered that
$C(\Gamma_{m})$ has no grading in general. It is a
differential $R_{m}$-module. We write $\partial_{m}$ for the
differential, so
\[
\Jsharp(\check Y ; \Gamma_{m}) = H( C(\Gamma_{m}), \partial_{m}).
\]
The rank-1 local system $\Gamma_{m}$ has the $\fm$-adic filtration
\[
            \Gamma_{m} \supset \fm \Gamma_{m} \supset \fm^{2}
            \Gamma_{m} \supset \cdots,
\]
and there is the corresponding filtration of the differential group,
\[
            C(\Gamma_{m}) \supset \fm  C(\Gamma_{m}) \supset \fm^{2}
             C(\Gamma_{m}) \supset \cdots.
\]
By its construction, $\fm^{p} C(\Gamma_{m})$ is the same as $C(\fm^{p}
\Gamma_{m})$. The $\fm$-adic filtration of the differential group
gives rise to an induced filtration of the homology:
\begin{equation}\label{eq:F-filtration}
      \Jsharp(\check Y ; \Gamma_{m}) = \cF^{0}  \supset \cF^{1}  \supset
      \cF^{2} \cdots ,
\end{equation}
where as usual $\cF^{p}$ is the subset of the homology that can be
represented by cycles in $\fm^{p} C(\Gamma_{m})$.

For the statement of the next proposition, 
recall that $\Jsharp(\check Y)$ denotes the instanton homology with
coefficients $\F = \Z/2$.

\begin{proposition}\label{prop:spectral-sequence}
    There is a convergent spectral sequence of differential
    $R_{m}$-modules whose $E_{1}$ page is the filtered module
    \[ 
        \Jsharp(\check Y) \otimes \gr R_{m}
     \]
    with the filtration obtained from the $\fm$-adic filtration of
    $R_{m}$, and which abuts to the filtered module $\Jsharp(\check
    Y ; \Gamma_{m})$ with the filtration $\cF^{p}$ induced by the $\fm$-adic
    filtration of $C(\Gamma_{m})$. Thus,
    \[
           \Jsharp(\check Y) \otimes \gr R_{m} \implies \gr \Jsharp(\check
              Y ; \Gamma_{m}).
    \]
     Furthermore, the filtration $\cF^{p}$ is $\fm$-stable, in that
     $\fm \cF^{p} \subset \cF^{p+1}$ with equality for large enough $p$.
\end{proposition}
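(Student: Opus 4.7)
The plan is to apply the general machinery of the spectral sequence associated to a filtered differential module. The underlying filtered object is $C(\Gamma_{m})$ equipped with its descending $\fm$-adic filtration $\fm^{p} C(\Gamma_{m})$. Since $\partial_{m}$ is $R_{m}$-linear it preserves each step of the filtration, so the standard exact couple construction produces a spectral sequence $\{E_{r}^{p}, d_{r}\}$. What remains is to identify the $E_{1}$ page, to verify that the induced filtration $\cF^{p}$ on the homology is $\fm$-stable, and to deduce convergence.

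For the $E_{0}$ and $E_{1}$ pages, I would use the fact that $C(\Gamma_{m}) = \bigoplus_{\beta} \Gamma_{m,\beta}$ is a finitely generated free $R_{m}$-module, with the critical points $\beta$ as an $R_{m}$-basis. Consequently,
\[
   \gr^{p} C(\Gamma_{m}) \;\cong\; \bigl(C(\Gamma_{m})/\fm C(\Gamma_{m})\bigr) \otimes_{\F} \gr^{p} R_{m},
\]
and the induced differential $d_{0}$ has the product form $\bar\partial \otimes 1$, where $\bar\partial$ is the differential on the reduction $C(\Gamma_{m})/\fm C(\Gamma_{m})$. Inspecting the boundary formula \eqref{eq:partial-def} together with the explicit description \eqref{eq:Gamma-explained} of each $\Gamma_{\zeta}$ as a monomial in the $T_{i}$, the substitution $T_{i}\mapsto 1$ sends every $\Gamma_{\zeta}$ to $1$, so $\bar\partial$ coincides with the boundary defining $\Jsharp(\check Y)$ over $\F$. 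Taking $d_{0}$-homology yields $E_{1} = \Jsharp(\check Y) \otimes_{\F} \gr R_{m}$, with the stated filtration.

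For $\fm$-stability and convergence, I would invoke the Artin--Rees lemma. The ring $R_{m}$ is Noetherian, being a localization of the Laurent polynomial ring $\F[T_{1}^{\pm 1}, T_{2}^{\pm 1}, T_{3}^{\pm 1}]$, and $C(\Gamma_{m})$ is finitely generated over $R_{m}$. Applying Artin--Rees to the submodule of cycles $Z = \ker \partial_{m}$ inside $C(\Gamma_{m})$ produces a constant $c$ such that
\[
   \fm^{p+1} C(\Gamma_{m}) \cap Z \;=\; \fm\cdot\bigl(\fm^{p} C(\Gamma_{m}) \cap Z\bigr)
\]
for all $p\ge c$. Writing $B = \im \partial_{m}$ and using the identification $\cF^{p} = ((\fm^{p} C \cap Z) + B)/B$, this upgrades immediately to $\cF^{p+1} = \fm \cF^{p}$ for large $p$; the opposite inclusion $\fm\cF^{p}\subset \cF^{p+1}$ is trivial. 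Krull's intersection theorem gives $\bigcap_{p} \fm^{p} C(\Gamma_{m}) = 0$, so the filtration on $C(\Gamma_{m})$ is Hausdorff and exhaustive, which gives convergence of the spectral sequence to the associated graded of $\Jsharp(\check Y;\Gamma_{m})$ with its induced filtration.

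The main obstacle I anticipate is the Artin--Rees step, where the Noetherian structure of $R_{m}$ is essential and where $\fm$-stability of the filtration induced on a subquotient is not automatic. By contrast, the identification of the $E_{1}$ page is largely formal: freeness of $C(\Gamma_{m})$ over $R_{m}$ makes the associated graded split cleanly as a tensor product, and the monomial form of $\Gamma_{\zeta}$ makes the reduction modulo $\fm$ completely transparent.
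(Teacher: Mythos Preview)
Your proposal is correct and follows essentially the same route as the paper. The paper's proof is terser, citing \cite[Theorem~A3.22]{Eisenbud} for the existence and convergence of the spectral sequence of a filtered differential module and \cite[Exercise~A3.42]{Eisenbud} for the $\fm$-stability claim, while you unpack these references by invoking Artin--Rees and Krull's intersection theorem explicitly; the identification of the $E_{1}$ page via the observation that each $T_{i}$ acts as $1$ on $\fm^{p}/\fm^{p+1}$ is the same in both.
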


\begin{proof}
    Since $C(\Gamma_{m})$ has finite rank as an $R_{m}$-module, the
    existence of a convergent spectral sequence from the filtered
    differential module $C(\Gamma_{m})$ is standard. 
    See for example \cite[Theorem A3.22]{Eisenbud}. Note that, 
      although the situation most-often
    considered is a graded differential module (a complex in the
    traditional sense), we can create a complex from the 
    differential module by placing
    $C(\Gamma_{m})$ in every degree,
     \[
               \cdots \stackrel{\partial_{m}}{\longrightarrow}
               C(\Gamma_{m})
               \stackrel{\partial_{m}}{\longrightarrow} C(\Gamma_{m})
                \stackrel{\partial_{m}}{\longrightarrow} C(\Gamma_{m})
           \stackrel{\partial_{m}}{\longrightarrow} \cdots.
     \]
   In the present example, the $E_{1}$ page is the homology of the
   associated graded differential module, 
   \[
         \left( \fm^{p} C(\Gamma_{m}) \right) / \left( \fm^{p+1}
             C(\Gamma_{m})
             \right) =
                   C\left ( (\fm^{p}(\Gamma_{m}) /(
                      \fm^{p+1}\Gamma_{m}) 
            \right).
\] 
The local system $ (\fm^{p}\Gamma_{m}) /(\fm^{p+1}\Gamma_{m})$ is
isomorphic to the
constant coefficient system $\fm^{p}/\fm^{p+1}$, because each $T_{i}$
acts trivially on the quotient. This identifies the $E_{1}$ page as
$\Jsharp(\check Y) \otimes \gr R_{m}$ as claimed. For the claim of
$\fm$-stability, see  \cite[Exercise A3.42]{Eisenbud}.
\end{proof}

We have the following corollary (which can also be proved simply and
directly, as pointed out in the introduction).

\begin{corollary}\label{cor:rank-inequality}
    There is an inequality of ranks,
    \[
    \begin{aligned}
        \dim_{\,\F} \Jsharp(\check Y) &\ge \rank_{R_{m}} \Jsharp(K;
        \Gamma_{m})\\
        &= \rank_{R} \Jsharp(K; \Gamma).
    \end{aligned}
     \]
\end{corollary}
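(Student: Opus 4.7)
The plan is to give the direct specialization argument sketched in the introduction, and then to remark on how the same inequality also falls out of the spectral sequence of Proposition~\ref{prop:spectral-sequence}. The direct approach is essentially one line, so I would take it as the primary proof. By construction the differential $R_{m}$-module $C^{\sharp}(\check Y;\Gamma_{m})$ from \eqref{eq:Csharpm} is free of finite rank $N$, equal to the number of (perturbed) critical points, and reducing modulo $\fm$, i.e.\ evaluating at $T_{1}=T_{2}=T_{3}=1$, is the base change $R_{m}\to\F$ that recovers the complex whose homology is $\Jsharp(\check Y)$. Let $r=\rank_{R_{m}}(\partial_{m})$ be the rank of the boundary as a map of free $R_{m}$-modules (equivalently, over the field of fractions $F$), and let $r'=\rank_{\F}(\partial_{m}\otimes 1)$ be its rank after specialization. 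Standard upper semicontinuity of matrix rank gives $r'\le r$: a minor of $\partial_{m}$ that is nonzero in $R_{m}$ may vanish at $(1,1,1)$, but a minor nonzero at $(1,1,1)$ is certainly nonzero in $R_{m}$. Subtracting twice these ranks from $N$ then gives $\dim_{\F}\Jsharp(\check Y)=N-2r'\ge N-2r=\rank_{R_{m}}\Jsharp(\check Y;\Gamma_{m})$, which is the desired inequality. The stated equality $\rank_{R_{m}}=\rank_{R}$ is immediate, since $R$ and $R_{m}$ share the field of fractions $F$ and rank is generic dimension over $F$.

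For the spectral-sequence version, convergence identifies $E_{\infty}^{p}$ with $\cF^{p}/\cF^{p+1}$, and as a subquotient of $E_{1}^{p}=\Jsharp(\check Y)\otimes_{\F}(\fm^{p}/\fm^{p+1})$ it satisfies a term-by-term $\F$-dimension inequality. Because $R_{m}$ is a three-dimensional regular local ring with residue field $\F$, the associated graded $\gr R_{m}$ is a polynomial ring in three variables, and the Hilbert polynomial of $E_{1}$ has leading term $(\dim_{\F}\Jsharp(\check Y))\,p^{2}/2$; this is an upper bound for the leading term of the Hilbert polynomial of $\gr \Jsharp(\check Y;\Gamma_{m})$. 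Using the $\fm$-stability of $\cF^{p}$ asserted in Proposition~\ref{prop:spectral-sequence}, a standard Hilbert--Samuel computation identifies the latter leading coefficient as $(\rank_{R_{m}}\Jsharp(\check Y;\Gamma_{m}))/2$, recovering the same inequality.

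The main (and only) obstacle in the second route is the commutative-algebra step relating the leading term of the Hilbert polynomial of an associated graded module to the generic $R_{m}$-rank of the original module, which is why the direct argument is cleaner and, to my mind, the one worth actually presenting for the corollary.
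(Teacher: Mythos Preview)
Your proof is correct. Your primary route---the direct specialization argument using upper semicontinuity of matrix rank---is precisely the elementary argument the paper sketches in the introduction (around \eqref{eq:main-inequality}) and explicitly flags, just before stating the corollary, as an alternative to the proof it actually gives. The paper's formal proof of the corollary instead follows your second route: it uses the spectral sequence of Proposition~\ref{prop:spectral-sequence}, the termwise inequality $\dim_{\F}(E_{1}^{p})\ge\dim_{\F}(E_{\infty}^{p})$, and then the $\fm$-stability of the filtration together with a Hilbert--Samuel asymptotic to identify the growth rate of $\dim_{\F}(\cF^{p}/\cF^{p+1})$ with $\rank_{R_{m}}\Jsharp(\check Y;\Gamma_{m})\cdot\dim_{\F}(\fm^{p}/\fm^{p+1})$. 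Your sketch of this second route is essentially the paper's argument, only phrased more explicitly in terms of Hilbert polynomials. The direct argument is certainly cleaner for the bare inequality; the paper presumably foregrounds the spectral-sequence argument because the spectral sequence itself (and the later vanishing of $d_{1},d_{2},d_{3}$) is the object of real interest.
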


\begin{proof}
    The spectral sequence tells us that 
     \[
                \dim_{\,\F}\left( \Jsharp(\check Y) \otimes (\fm^{p} /
                    \fm^{p+1}) \right)
                      \ge \dim_{\,\F}\left (\cF^{p} /\cF^{p+1}\right), 
     \]
   for these are the dimensions of the associated graded modules on
   the $E^{1}$ page and the limit respectively. Because the filtration
   is $\fm$-stable, we have
   \[
                \dim_{\,\F}\left (\cF^{p} /\cF^{p+1}\right) \sim
                        \dim_{\,\F} \left ( \fm^{p} \Jsharp(\check
                        Y;\Gamma_{m})  \bigm / \fm^{p+1} \Jsharp(\check
                        Y;\Gamma_{m}) \right),
    \]
    and the right-hand side is asymptotic to
    $n \dim (\fm^{p} /\fm^{p+1})$, where $n$ is the rank of
    $ \Jsharp(\check Y;\Gamma_{m})$.
\end{proof}

\begin{corollary}[Corollary~\ref{cor:Jsharp-planar-inequality} of the Introduction]
    If $K\subset \R^{2}\subset \R^{3}$ is a planar web, then the
    dimension of $\Jsharp(K)$ is greater than or equal to the number
    of Tait colorings of $K$.
\end{corollary}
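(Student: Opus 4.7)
The plan is to combine two results that have already been established in the preceding development, so no substantial new argument is needed. The statement is essentially a corollary of Corollary~\ref{cor:rank-inequality} together with Theorem~\ref{thm:deformed-is-Tait}.

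First I would apply Corollary~\ref{cor:rank-inequality} to the bifold $\check Y = (S^{3}, K)$. That corollary provides the rank inequality
\[
      \dim_{\,\F} \Jsharp(K) \;\ge\; \rank_{R} \Jsharp(K;\Gamma),
\]
which is the content of the spectral-sequence argument: the rank of $\Jsharp(K;\Gamma)$ over $R$ equals the rank of $\Jsharp(K;\Gamma_{\sm})$ over $R_{\sm}$ (both rings have the same field of fractions), and the $\sm$-adic spectral sequence of Proposition~\ref{prop:spectral-sequence} has $E_{1}$ page $\Jsharp(K) \otimes \gr R_{\sm}$ abutting to the associated graded of $\Jsharp(K;\Gamma_{\sm})$ with an $\sm$-stable filtration; asymptotics of dimensions of graded pieces then yield the stated inequality.

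Next I would invoke Theorem~\ref{thm:deformed-is-Tait}, which asserts that for planar $K$ the rank of $\Jsharp(K;\Gamma)$ as an $R$-module equals the number of Tait colorings of $K$. Substituting this equality into the inequality above gives precisely
\[
    \dim_{\,\F} \Jsharp(K) \;\ge\; \#\{\text{Tait colorings of } K\},
\]
which is the statement to be proved.

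There is no real obstacle here, since both ingredients are already in hand; the proof is simply the composition of the two facts. The only thing worth flagging in the write-up is that $\rank_{R}\Jsharp(K;\Gamma) = \rank_{R_{\sm}}\Jsharp(K;\Gamma_{\sm})$, so that the rank appearing on the deformed side of Corollary~\ref{cor:rank-inequality} is the same quantity computed by Theorem~\ref{thm:deformed-is-Tait}; this is immediate because localization at $\sm$ does not change the generic rank of a finitely generated module.
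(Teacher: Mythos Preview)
Your proposal is correct and matches the paper's own proof essentially verbatim: the paper states that the corollary ``is now an immediate consequence of the previous corollary [Corollary~\ref{cor:rank-inequality}] and Theorem~\ref{thm:deformed-is-Tait},'' which is precisely the composition you describe. Your added remark that $\rank_{R}\Jsharp(K;\Gamma) = \rank_{R_{\sm}}\Jsharp(K;\Gamma_{\sm})$ is already contained in the statement of Corollary~\ref{cor:rank-inequality} itself, so it need not be flagged separately.
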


\begin{proof}
    This is now an immediate consequence of the previous corollary and
    Theorem~\ref{thm:deformed-is-Tait}.
\end{proof}

The simplest example where this provides a new bound is the case of
the 1-skeleton of the dodecahedron. Previous work \cite{KM-jsharp,
  KM-jsharp-triangles} provided a lower bound of $58$ in this case, but the
corollary provides a lower bound of $60$. 
The authors do not know whether the inequality of
Corollary~\ref{cor:Jsharp-planar-inequality} continues to hold if $K$
is not planar.

\subsection{Criteria for equality of ranks}

We examine when equality can occur for the inequality of ranks in
Corollary~\ref{cor:rank-inequality}. For this purpose, we introduce a
variant of the local system $\Gamma$. First, let us extend the ground
field $\F$ by transcendentals $z_{1}$, $z_{2}$, $z_{3}$, and write
\[
        \tilde\F = \F(z_{1}, z_{2}, z_{3}).
\]
We replace our ring $R$ and its localization $R_{m}$ by $\tilde R=
R\otimes\tilde \F$ and its localization $\tilde R_{m}$. The latter is
the local ring at the point $(1,1,1)$ in $\mathbb{A}^{3}$, and we now
wish to restrict to a generic line through $(1,1,1)$, namely a line
\[
       (T_{1}, T_{2}, T_{3}) = (1 + z_{1} t, 1+z_{2} t, 1 + z_{3} t).
\]
Thus we introduce the ring $S$ which is the localization at $0\in
\mathbb{A}^{1}$ of the polynomial ring $\tilde\F[t]$, and we regard
$S$ as a module over $\tilde R$ by
\begin{equation}\label{}
           q_{z}:  T_{i} \mapsto 1 +  z_{i} t.
\end{equation}
We write
\[
          \fn \ideal S
\]
for the maximal ideal in this local ring. We have a local system of
$S$-modules
\[
             \Gamma_{S} = \Gamma\otimes_{R} S
\]
and instanton homology groups $\Jsharp(K ; \Gamma_{S})$. 

The analysis of $\Jsharp(K ; \Gamma_{S})$ runs in the same way as
$\Jsharp(K ; \Gamma)$ and $\Jsharp(K ; \Gamma_{m})$. The main point is
that the image of $P$ under the homomorphism $q_{z}$ is non-zero
element $P_{S} \in S$. It has the form
\begin{equation}\label{eq:PS}
             P_{S} = \left( \sum_{i < j} z_{i}^{2} z_{j}^{2} \right)
             t^{4} + O(t^{5}).
\end{equation}
Our edge operators $u_{e}$ now satisfy $u_{e}^{2} + P_{S} u_{e} = 0$,
and the fact that $P_{S}$ is non-zero is sufficient for us to repeat
the previous arguments. The ranks of $\Jsharp(K;\Gamma_{S})$ and
$\Jsharp(K ; \Gamma)$ as an $S$-module and an $R$-module respectively are the
same, because the field of fractions of $S$ is obtained by adjoining
$t$ to the field of fractions of $R$. This allows us to carry over
Theorem~\ref{thm:deformed-is-Tait} diectly; or we could repeat the
same proof. Either way, we have:

\begin{proposition}
     \label{prop:S-is-Tait}
   If $K$ lies in the plane, then the rank of $\Jsharp(K;\Gamma_{S})$ as
   an $S$-module is equal to the number of Tait colorings of $K$. \qed
\end{proposition}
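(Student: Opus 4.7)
The plan is to repeat the proof of Theorem~\ref{thm:deformed-is-Tait} verbatim, with the coefficient ring $R$ replaced throughout by $S$, the element $P \in R$ replaced by its image $P_{S} = q_{z}(P) \in S$, and the localization $R' = R[1/P]$ replaced by $S' = S[1/P_{S}]$. The essential algebraic input is that $P_{S}$ is a nonzero element of $S$, and this is immediate from formula~\eqref{eq:PS}: the leading $t^{4}$-coefficient $\sum_{i<j} z_{i}^{2} z_{j}^{2}$ is a nonzero element of $\tilde\F$ because $z_{1}, z_{2}, z_{3}$ were adjoined as algebraically independent transcendentals. Every step in the proof of Theorem~\ref{thm:deformed-is-Tait} used only the non-vanishing of $P$ in the coefficient ring, not any finer property of its form.

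First I would verify the cubic relation $u_{e}^{3} + P_{S} u_{e} = 0$ on $\Jsharp(K;\Gamma_{S})$ together with the vertex relations $u_{1} + u_{2} + u_{3} = 0$, $u_{2}u_{3} + u_{3}u_{1} + u_{1}u_{2} = P_{S}$, and $u_{1}u_{2}u_{3} = 0$. These follow by rereading the proofs of Proposition~\ref{prop:u-relation} and Proposition~\ref{prop:vertex-relations} over the local system $\Gamma_{S}$, with each $\Gamma_{\zeta_{i}}$ appearing in Lemma~\ref{lem:instanton-degree} now computed as the image under $q_{z}$ of the corresponding monomial. This makes available the edge-decomposition
\[
   \Jsharp(K;\Gamma_{S}) \otimes_{S} S' \;=\; \bigoplus_{s \subset \Edges(K)} V(K;s),
\]
with $V(K;s) = 0$ unless $s$ is a $1$-set, exactly as in Corollary~\ref{cor:only-1-sets}.

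Next, the model calculations on the unknot (Proposition~\ref{prop:u-matrix-U}) and on the theta graph (Proposition~\ref{prop:V-for-theta} and Lemma~\ref{lem:w2-relation}) carry over verbatim: their arguments rest on the evaluations $\langle S(m) \rangle$ and $\langle \Theta(l,m,n) \rangle$ of closed foams, each of which depends only on the cubic and vertex relations together with a handful of small moduli-space inputs that are identical in the new setting. Correspondingly, the skein/excision consequences --- Corollary~\ref{cor:handle-add}, Corollary~\ref{cor:bigon-add}, and Proposition~\ref{prop:s-homology} --- rest solely on the exact triangle of Proposition~\ref{prop:exact-triangle} with local coefficients and on these model computations, and they transfer without change.

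Finally, for planar $K$ the analysis proceeds exactly as before: for an even $1$-set $s$ the summand $V(K;s)$ is a tensor product of unknot summands, free of rank $2^{n(s)}$ where $n(s)$ is the number of cycles in the complementary $2$-set, while non-even $1$-sets contribute zero via the bridge argument. Summing $2^{n(s)}$ over even $1$-sets reproduces the number of Tait colorings by the same combinatorial bijection as in Theorem~\ref{thm:deformed-is-Tait}, and the rank of $\Jsharp(K;\Gamma_{S})$ as an $S$-module agrees with its rank over $S'$ since both rings share the field of fractions $\tilde\F(t)$. There is no genuine obstacle: the only thing to confirm, which is transparent on inspection, is that no step in the original proof exploited the specific form \eqref{eq:p} of $P$ beyond $P \ne 0$.
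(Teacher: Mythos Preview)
Your approach is correct and is in fact one of the two routes the paper explicitly sanctions: the sentence preceding the proposition says ``This allows us to carry over Theorem~\ref{thm:deformed-is-Tait} directly; or we could repeat the same proof.'' You have carried out the second option, and your observation that nothing in the proof of Theorem~\ref{thm:deformed-is-Tait} uses more about $P$ than its non-vanishing is exactly right.

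The paper's preferred route, however, is shorter and avoids any rerunning of the argument. The point is that the homomorphism $q_{z}: R \to S$, $T_{i}\mapsto 1+z_{i}t$, is injective (because $1+z_{1}t,\,1+z_{2}t,\,1+z_{3}t$ are algebraically independent over $\F$, the $z_{i}$ being transcendentals), so it extends to an embedding of fields of fractions $\mathrm{Frac}(R)\hookrightarrow \mathrm{Frac}(S)$; and $\mathrm{Frac}(S)=\tilde\F(t)$ is obtained from the image simply by adjoining $t$. Since $C^{\sharp}(\Gamma_{S}) = C^{\sharp}(\Gamma)\otimes_{R} S$ and rank is computed after passing to the field of fractions, one has
\[
\rank_{S}\Jsharp(K;\Gamma_{S}) = \dim_{\mathrm{Frac}(S)} H\bigl(C^{\sharp}(\Gamma)\otimes_{R}\mathrm{Frac}(S)\bigr) = \dim_{\mathrm{Frac}(R)} H\bigl(C^{\sharp}(\Gamma)\otimes_{R}\mathrm{Frac}(R)\bigr) = \rank_{R}\Jsharp(K;\Gamma),
\]
the middle equality because dimension is unchanged under field extension. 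Theorem~\ref{thm:deformed-is-Tait} then gives the Tait-coloring count immediately. Your longer route has the merit of being self-contained over $S$, but the direct transfer is what the paper has in mind and requires no new verification.
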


Consider the $\fn$-adic filtration of the local system $\Gamma_{S}$,
and the corresponding filtration of the differential $S$-module
$C^{\sharp}(\Gamma_{S})$ which computes $\Jsharp(K; \Gamma_{S})$, as
in section~\ref{sec:spectral-sequence}. We have an induced filtration
of $\Jsharp(K; \Gamma_{S})$,
\begin{equation}\label{eq:F-filtration}
      \Jsharp(K ; \Gamma_S) = \cG^{0}  \supset \cG^{1}  \supset
      \cG^{2} \cdots ,
\end{equation}
as the counterpart to filtration~\eqref{eq:F-filtration} of
$\Jsharp(K ; \Gamma_{m})$. We therefore have a spectral sequence, just
as in Proposition~\ref{prop:spectral-sequence}.

\begin{proposition}\label{prop:spectral-sequence-S}
    There is a convergent spectral sequence of ungraded differential
    $S$-modules whose $E_{1}$ page is the filtered module
    \[ 
        \Jsharp(\check Y) \otimes \gr S
     \]
    with the filtration obtained from the $\fn$-adic filtration of
    $S$, and which abuts to the filtered module $\Jsharp(\check
    Y ; \Gamma_{S})$ with the filtration $\cG^{p}$ induced by the $\fn$-adic
    filtration of $C(\Gamma_{S})$. Thus,
    \[
           \Jsharp(\check Y) \otimes \gr S \implies \gr \Jsharp(\check
              Y ; \Gamma_{S}).
    \]
     Furthermore, the filtration $\cG^{p}$ is $\fn$-stable, in that
     $\fn \cG^{p} \subset \cG^{p+1}$ with equality for large enough
     $p$. \qed
\end{proposition}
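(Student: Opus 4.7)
The plan is to imitate the proof of Proposition~\ref{prop:spectral-sequence} verbatim, with $S$ replacing $R_{\sm}$, $\fn$ replacing $\fm$, and $\Gamma_S$ replacing $\Gamma_{\sm}$. Almost nothing in that argument used special features of $R_{\sm}$ beyond: $(i)$ the differential module $C^{\sharp}(\check Y;\Gamma_S)$ has finite rank over $S$, $(ii)$ the ideal $\fn$ is the maximal ideal of a Noetherian local ring, so the standard Artin--Rees machinery applies, and $(iii)$ the associated graded of the local system with respect to the filtration by powers of $\fn$ is a constant system.

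First I would observe that $C^{\sharp}(\check Y;\Gamma_S) = \bigoplus_\beta \Gamma_{S,\beta}$ is a finitely generated free $S$-module, with differential $\partial_S$, and that the $\fn$-adic filtration $\Gamma_S \supset \fn\Gamma_S \supset \fn^2\Gamma_S \supset \cdots$ induces a compatible filtration of $(C^{\sharp}(\check Y;\Gamma_S),\partial_S)$ by $S$-submodules $\fn^p C^{\sharp}(\check Y;\Gamma_S)$. As in the proof of Proposition~\ref{prop:spectral-sequence}, one promotes this differential $S$-module to an honest complex by placing it in every degree, and then quotes \cite[Theorem A3.22]{Eisenbud} to obtain a convergent spectral sequence of $S$-modules whose abutment is the filtration $\cG^p$ on $\Jsharp(K;\Gamma_S)$ induced by $\fn^p C^{\sharp}(\check Y;\Gamma_S)$.

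Next I would identify the $E_1$-page. The $E_0$-page is the complex associated to the graded pieces $\fn^p C^{\sharp}(\check Y;\Gamma_S)/\fn^{p+1} C^{\sharp}(\check Y;\Gamma_S)$, which agrees with $C^{\sharp}\bigl(\check Y;\,\fn^p\Gamma_S/\fn^{p+1}\Gamma_S\bigr)$. The parallel transport maps $\Gamma_{S,\zeta}$ are multiplication by monomials $T_1^{a}T_2^{b}T_3^{c}$, and under $q_z$ each $T_i$ becomes $1+z_i t \equiv 1 \pmod{\fn}$. Hence modulo $\fn$ the local system $\Gamma_S$ is the constant system $S/\fn$, and more generally the local system $\fn^p\Gamma_S/\fn^{p+1}\Gamma_S$ is the constant local system with fiber the free $\F$-vector space $\fn^p/\fn^{p+1}$. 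The differential on $E_0$ therefore splits off tensorially, and taking homology gives
\[
 E_1 \;=\; \bigoplus_p \Jsharp(\check Y)\otimes_{\F}\bigl(\fn^p/\fn^{p+1}\bigr) \;=\; \Jsharp(\check Y)\otimes_{\F}\gr S,
\]
which is the claimed $E_1$-page with its induced filtration.

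Finally I would establish the $\fn$-stability statement $\fn\cG^p \subset \cG^{p+1}$, with equality for $p$ large. This is precisely the Artin--Rees content of \cite[Exercise A3.42]{Eisenbud}: applied to the finitely generated $S$-modules $\ker\partial_S$ and $\fn^p\ker\partial_S$ inside the Noetherian module $C^{\sharp}(\check Y;\Gamma_S)$, Artin--Rees yields an integer $p_0$ such that for all $p\geq p_0$ one has $\fn^p C^{\sharp}(\check Y;\Gamma_S)\cap \ker\partial_S = \fn^{p-p_0}\bigl(\fn^{p_0}C^{\sharp}(\check Y;\Gamma_S)\cap\ker\partial_S\bigr)$, which passes to the quotient modulo images to give the stable equality $\fn\cG^p = \cG^{p+1}$. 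The main obstacle, such as it is, is being careful that the ungraded differential module framework of \cite[Theorem A3.22]{Eisenbud} applies — but, as already noted in the proof of Proposition~\ref{prop:spectral-sequence}, the trick of placing the same module in every cohomological degree reduces this to the standard graded case, and the rest is formal.
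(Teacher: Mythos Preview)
Your proposal is correct and is exactly the approach the paper takes: the paper's own proof is simply \qed, indicating that the argument of Proposition~\ref{prop:spectral-sequence} carries over verbatim with $S$, $\fn$, $\Gamma_S$ in place of $R_{\sm}$, $\fm$, $\Gamma_{\sm}$. You have spelled out precisely that translation, and your verification of the three needed properties (finite rank over $S$, Noetherian local ring so Artin--Rees applies, associated graded is constant) is accurate.
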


The reason for replacing the $3$-dimensional regular local ring
$R_{m}$ with the $1$-dimensional local ring $S$ is that the
induced filtration $\cG^{p}$ on the homology is much easier to
understand in the case that $S$ is a principal ideal domain. The above
spectral sequence is now an example of Bockstein spectral sequence,
associated in this case to the exact coefficient sequence,
\[
           0 \to S \stackrel{t}{\to} S \to \tilde \F \to 0.
\]
The next simplification in the case of principal ideal domain is that
the induced filtration on the homology of a complex is easier to understand.

\begin{lemma}
    Let $S$ be a domain, and let
    $(C,\partial)$ be  a differential $S$-module,  with $C$ a
    free $S$-module. Let $H(C)$ be its homology. Let $\fn = (t)\ideal
    S$ be a
    principal ideal generated by  a prime $t$, and let $\cG^{p}$ be the $p$'th
    step of the filtration of $H(C)$ induced by the $\fn$-adic
    filtration of $C$. Then $\cG^{p} = \fn^{p} H(C)$.
\end{lemma}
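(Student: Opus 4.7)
The plan is to prove the two inclusions $\fn^p H(C) \subset \cG^p$ and $\cG^p \subset \fn^p H(C)$ separately. The first is essentially a tautology: if $\alpha = t^p[y]$ with $y$ a cycle in $C$, then $t^p y$ is a cycle lying in $\fn^p C = t^p C$ and represents $\alpha$, so $\alpha \in \cG^p$ by the very definition of the induced filtration.

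For the reverse inclusion, the key observation I want to exploit is that since $\fn = (t)$ is principal, every element of $\fn^p C$ has the form $t^p y$ for some $y \in C$, and since $C$ is free over the domain $S$, multiplication by $t$ (a nonzero element of $S$) is injective on $C$. So suppose $\alpha \in \cG^p$: then there is a cycle $x \in \fn^p C$ with $[x] = \alpha$. Write $x = t^p y$ with $y \in C$. From $\partial x = 0$ and the $S$-linearity of $\partial$ we get $t^p \partial y = 0$, and injectivity of multiplication by $t^p$ on $C$ forces $\partial y = 0$. Therefore $y$ is itself a cycle, $[y] \in H(C)$, and $\alpha = [x] = t^p[y] \in \fn^p H(C)$, as required.

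There is no serious obstacle here; the whole argument is really just the observation that when the filtration is given by powers of a principal ideal and the complex is torsion-free, cycles at filtration level $p$ can always be written as $t^p$ times a cycle, which is exactly what fails when the ideal is not principal or $C$ has $t$-torsion. I would present the proof as the two-line argument above and be done.
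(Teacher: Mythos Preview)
Your proof is correct and is essentially identical to the paper's own argument: the paper also writes a cycle in $\fn^{p}C$ as $t^{p}b$, uses injectivity of $t^{p}$ on the free module $C$ to conclude $\partial b=0$, and notes that the reverse inclusion is straightforward.
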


\begin{proof}
    Membership of $\cG^{p}$ means that a homology class $\alpha$ can
    be written in the form $[a]$ where $\partial a=0$ and $a\in
    \fn^{p} C$. The latter condition means that $a=t^{p} b$ for some
    $b$, and the condition $t^{p}\partial b = 0$ implies $\partial b
    =0$. So there is a homology class $\beta = [b]$ with $\alpha =
    t^{p}\beta$. So $\alpha \in \fn^{p} H(C)$. The reverse inclusion
    is straightforward, whether or not the ideal is principal.
\end{proof}

So the induced filtration of $\Jsharp(\check Y; \Gamma_{S})$ is the
$\fn$-adic filtration of the homology group as an $S$-module, 
and the associated graded object to which the spectral sequence of
Proposition~\ref{prop:spectral-sequence-S} abuts has terms
\[
            \gr_{p} \Jsharp(\check
              Y ; \Gamma_{S}) = \frac{\fn^{p}     \Jsharp(\check Y ;
              \Gamma_{S}) }{\fn^{p+1}     \Jsharp(\check Y ;
              \Gamma_{S}) }
\]
So the spectral sequence in
Proposition~\ref{prop:spectral-sequence-S} implies an inequality of
ranks,
\[
\dim_{\,\F} \Jsharp(\check Y) \ge \dim_{\tilde\F} \left( \frac{\fn^{p}
      \Jsharp(\check Y ; \Gamma_{S}) }{\fn^{p+1} \Jsharp(\check Y ;
      \Gamma_{S}) } \right).
\]
Equality holds if and only if all differentials $d_{r}$ in the
spectral sequence vanish.
By Nakayama's lemma, we have
\[
                    \dim_{\tilde\F} \left( \frac{
      \Jsharp(\check Y ; \Gamma_{S}) }{\fn \Jsharp(\check Y ;
      \Gamma_{S}) } \right) \ge \rank_{S} \Jsharp(\check Y ; \Gamma_{S}),
\]
with equality only if $\Jsharp(\check Y; \Gamma_{S})$ is a free
$S$-module. Further, in the case that $\Jsharp(\check Y; \Gamma_{S})$
is free, all the pieces of the associated grade object have dimension
equal to the rank of the module. This proves the following
proposition.

\begin{proposition}
\label{prop:S-rank-inequality}
 We have
\[
          \dim_{\,\F} \Jsharp(\check Y) \ge \rank_{S} \Jsharp(\check Y; \Gamma_{S}),
\]   
and equality holds if an only if all differentials $d_{r}$, $r\ge 1$,
in the spectral sequence of Proposition~\ref{prop:spectral-sequence-S}
are zero, and in that case the module $\Jsharp(\check Y; \Gamma_{S})$
is free. \qed
\end{proposition}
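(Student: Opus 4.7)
The plan is to extract both the inequality and the equality criterion directly from the spectral sequence of Proposition~\ref{prop:spectral-sequence-S}, using the fact that $S$ is a discrete valuation ring. Write $H = \Jsharp(\check Y; \Gamma_{S})$ and $n = \dim_{\,\F} \Jsharp(\check Y)$ for brevity.

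First I would note that, since $\fn = (t)$ is principal, each $\fn^{p}/\fn^{p+1}$ is one-dimensional over $\tilde\F$, so the $p$'th column of the $E_{1}$-page has dimension exactly $n$. Combining convergence of the spectral sequence with the lemma preceding the proposition (which identifies $\cG^{p}$ with $\fn^{p}H$ because $C(\Gamma_{S})$ is free over the domain $S$), one obtains the termwise inequality
\[
n \;\ge\; \dim_{\tilde\F}\bigl(\fn^{p} H / \fn^{p+1} H\bigr), \qquad p\ge 0,
\]
with equality for every $p$ if and only if every differential $d_{r}$ in the spectral sequence vanishes.

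Next I would apply the structure theorem for finitely generated modules over the DVR $S$ to decompose $H \cong S^{r} \oplus \bigoplus_{i} S/(t^{a_{i}})$, which gives the formula
\[
\dim_{\tilde\F}\bigl(\fn^{p} H / \fn^{p+1} H\bigr) \;=\; r + \#\{\, i : a_{i} > p \,\}.
\]
For $p$ larger than every $a_{i}$ the right-hand side equals $r = \rank_{S} H$, and the termwise inequality therefore yields $n \ge \rank_{S} H$, establishing the first assertion.

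Finally I would read off the equivalence from this same formula. If every $d_{r}$ vanishes, then the termwise inequality is an equality for all $p$, so taking $p$ large forces $r = n$, and then comparing with any smaller $p$ forces the torsion count to be zero, so $H$ is free of rank $n$. Conversely, if $\rank_{S} H = n$, the termwise inequality at $p=0$ squeezes $n \le r + \#\{i : a_{i} > 0\} \le n$, so there is no torsion and $H \cong S^{n}$; then $\dim(\fn^{p} H / \fn^{p+1} H) = n$ for every $p$, so each $E_{\infty}^{p}$ matches $E_{1}^{p}$ and no differential can be nonzero. I do not anticipate a genuine obstacle in this argument: every ingredient is in place, and the only care required is the bookkeeping of the indices $p$ and $a_{i}$ when running the structure-theorem comparison.
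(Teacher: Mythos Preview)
Your argument is correct and follows essentially the same route as the paper: both derive the termwise inequality $\dim_{\F}\Jsharp(\check Y)\ge\dim_{\tilde\F}(\fn^{p}H/\fn^{p+1}H)$ from the spectral sequence (using the lemma identifying $\cG^{p}$ with $\fn^{p}H$), and then compare with the rank. The only cosmetic difference is that the paper invokes Nakayama's lemma to pass from $\dim_{\tilde\F}(H/\fn H)$ to $\rank_{S}H$ and to detect freeness, whereas you write out the structure theorem $H\cong S^{r}\oplus\bigoplus_{i}S/(t^{a_{i}})$ explicitly and read off $\dim_{\tilde\F}(\fn^{p}H/\fn^{p+1}H)=r+\#\{i:a_{i}>p\}$; these are equivalent over a DVR and your version makes the large-$p$ step and the squeeze at $p=0$ more transparent.
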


Although we have obtained a criterion from the Bockstein spectral
sequence, the other tool one can use for in the case of principal
ideal domain is the universal coefficient theorem for homology. Our
differential module $C^{\sharp}(\Gamma_{S})$ is a free $S$-module,
from which $C^{\sharp}(\tilde\F)$ is obtained by reducing mod
$(t)$. The universal coefficient theorem therefore tells us that we
have split short exact sequence and an isomorphism
\[
               \Jsharp(\check Y) \otimes \tilde\F
                     \;  \cong\;\Jsharp(\check Y ; \Gamma_{S}) \otimes
                       \tilde\F \; \oplus \; \mathop{\rm{Tor}} ( 
 \Jsharp(\check Y ; \Gamma_{S}) , S/(t)).
\]
Concretely, the finitely-generated $S$-module $ \Jsharp(\check Y ;
\Gamma_{S})$ has a decomposition
\[
         \Jsharp(\check Y ; \Gamma_{S}) \cong S^{r} \oplus
         \frac{S}{(t^{a_{1}})} 
                    \oplus \dots \oplus  \frac{S}{(t^{a_{l}})},
\]
and the universal coefficient theorem tells us that
\[
          \dim_{\,\F} \Jsharp( \check Y) = r + 2 l.
\]
In particular, equality holds in
Proposition~\ref{prop:S-rank-inequality} if and only if
$\Jsharp(\check Y; \Gamma_{S})$ is torsion free.
As a corollary, combining this with Proposition~\ref{prop:S-is-Tait},
we obtain the following.

\begin{corollary}
    Let $K$ be a planar web. Then $\dim_{\,\F} \Jsharp(K)$ is greater
    than or equal to
    the number of Tait colorings, and equality holds
     if and only if one of the following two equivalent conditions
     holds:
     \begin{enumerate}
     \item the spectral sequence 
    of Proposition~\ref{prop:spectral-sequence-S} collapses at the
    $E_{1}$ page; or
    \item the instanton homology $\Jsharp(K ; \Gamma_{S})$ is torsion free.
     \end{enumerate}
     \qed
\end{corollary}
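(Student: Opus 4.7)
The plan is to assemble the corollary directly from the two propositions immediately preceding it, together with the universal coefficient computation laid out just before the statement. First, Proposition~\ref{prop:S-is-Tait} identifies $\rank_{S}\Jsharp(K;\Gamma_{S})$ with the number of Tait colorings of $K$ whenever $K$ is planar. Combining this with the inequality in Proposition~\ref{prop:S-rank-inequality}, namely
\[
\dim_{\,\F}\Jsharp(K)\;\ge\;\rank_{S}\Jsharp(K;\Gamma_{S}),
\]
yields the claimed lower bound on $\dim_{\,\F}\Jsharp(K)$.

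For the equivalence of the equality case with condition (1), I would simply invoke the second half of Proposition~\ref{prop:S-rank-inequality}: equality holds in the inequality above if and only if every differential $d_{r}$ ($r\ge 1$) in the Bockstein spectral sequence of Proposition~\ref{prop:spectral-sequence-S} vanishes, and in that case $\Jsharp(K;\Gamma_{S})$ is a free $S$-module. Since freeness is a special case of torsion-freeness, this immediately implies (1)$\Rightarrow$(2).

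For the converse, (2)$\Rightarrow$(1), I would use the structure theorem for finitely generated modules over the principal ideal domain $S$, as in the discussion preceding the corollary: write
\[
\Jsharp(K;\Gamma_{S})\;\cong\;S^{r}\oplus\frac{S}{(t^{a_{1}})}\oplus\dots\oplus\frac{S}{(t^{a_{l}})},
\]
and apply the universal coefficient theorem, which gives $\dim_{\,\F}\Jsharp(K)=r+2l$. If $\Jsharp(K;\Gamma_{S})$ is torsion free then $l=0$, so $\dim_{\,\F}\Jsharp(K)=r=\rank_{S}\Jsharp(K;\Gamma_{S})$, i.e.\ equality holds, forcing collapse of the spectral sequence by Proposition~\ref{prop:S-rank-inequality}. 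There is no real obstacle here; the proof is purely a matter of stringing together the cited results, and the only subtlety is to remember that equality of ranks for a finitely generated $S$-module with its reduction mod $\fn$ is equivalent to torsion-freeness, which is exactly what the universal coefficient decomposition provides.
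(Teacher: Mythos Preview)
Your proposal is correct and follows essentially the same route as the paper: the corollary is marked \qed{} because it is an immediate assembly of Proposition~\ref{prop:S-is-Tait}, Proposition~\ref{prop:S-rank-inequality}, and the universal-coefficient computation $\dim_{\,\F}\Jsharp(K)=r+2l$ recorded just before the statement. Your organization (proving (1)$\Rightarrow$(2) via freeness and (2)$\Rightarrow$(1) via $l=0$) is a mild rephrasing of the paper's observation that both conditions are separately equivalent to equality in the rank inequality.
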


\remark
While we chose to introduce three indeterminates $z_{i}$ in order to
have a general line through $(1,1,1)$, it is apparent from the
formula \eqref{eq:PS} that $P_{S}$ is non-zero if we make either of
the substitutions $(z_{1}, z_{2}, z_{3}) = (1,1,1)$, or $(z_{1},
  z_{2}, z_{3}) = (1,1,0)$. In place of the ring $S$, we could have
    used the smaller ring $\bar S$ obtained as the localization of
    $\F[t]$ at $t=0$, made into an $R$-module by either of the two
    homomorphisms
\[
\begin{aligned}
    q_{(1,1,1)} : R &\to \bar S \\
    q_{(1,1,0)} : R &\to \bar S
\end{aligned}
\]
given by
\[
       \begin{aligned}
    q_{(1,1,1)} : (T_{1}, T_{2}, T_{3}) &\mapsto (1+t, 1+t, 1+t) \\
    q_{(1,1,0)} : (T_{1}, T_{2}, T_{3}) &\mapsto  (1+t, 1+t, 1) 
\end{aligned}      
\]
respectively.

\subsection{Non-vanishing for the homology with local coefficients}

Returning to the three-dimensional local ring $R_{m}$ or to $R$ itself
and the original local coefficient system $\Gamma$, the universal coefficient
theorem provides not a long exact sequence but a Tor spectral
sequence,  because we are not
dealing with a principal ideal domain. Namely, there is a spectral sequence with
\[
  E^{2}_{p} = \mathrm{Tor}^{R}_{p} ( \Jsharp(\check Y;\Gamma), \F)
\]
abutting to $\Jsharp(\check Y)$. As an $R$-module, $\F$ has a
resolution by a complex of free $R$-modules of length $3$, so the Tor
groups that appear on the $E^{2}$ page are zero for $p> 3$. (So only
the $d_{2}$ and $d_{3}$ differentials are potentially non-zero.) A
useful consequence is that, if $\Jsharp(\check Y; \Gamma)$ is zero,
then so is $\Jsharp(\Gamma)$. From the non-vanishing theorem in
\cite{KM-jsharp} we therefore obtain a non-vanishing theorem for
$\Jsharp(\check Y; \Gamma)$.

\begin{theorem}\label{thm:non-vanishing-Gamma}
    Let $K \subset \R^{3}$ be a web with no spatial bridge. Then
    $\Jsharp( K ; \Gamma)$ is a non-zero $R$-module. \qed
\end{theorem}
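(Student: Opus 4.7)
The plan is to argue by contraposition, using the Tor spectral sequence just introduced together with the non-vanishing theorem of \cite{KM-jsharp} for the undeformed instanton homology. Concretely, I would suppose that $\Jsharp(K;\Gamma) = 0$ as an $R$-module and derive that $\Jsharp(K)$ is also zero, which contradicts the established non-vanishing result for bridgeless webs.

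The first step is to recall that the complex $C^{\sharp}(\check Y;\Gamma)$ is a bounded complex of free, finitely generated $R$-modules, and that $\F$ is obtained from $R$ by evaluation $T_i=1$ so that $C^{\sharp}(\check Y;\Gamma)\otimes_R \F$ computes $\Jsharp(\check Y)$. This gives the Tor spectral sequence
\[
    E^{2}_{p} = \mathrm{Tor}^{R}_{p}\bigl(\Jsharp(\check Y;\Gamma),\,\F\bigr) \implies \Jsharp(\check Y),
\]
as noted in the paragraph preceding the statement. If $\Jsharp(K;\Gamma)$ vanishes, then every $\mathrm{Tor}^R_p(\Jsharp(K;\Gamma),\F)$ vanishes; hence the entire $E^2$ page is zero and the spectral sequence collapses trivially onto $\Jsharp(K) = 0$.

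The second step is to invoke the main non-vanishing theorem from \cite{KM-jsharp}: for any web $K\subset \R^{3}$ with no spatial bridge, $\Jsharp(K)$ is non-trivial as an $\F$-vector space. This directly contradicts the conclusion of the first step, so the assumption $\Jsharp(K;\Gamma)=0$ must fail, completing the argument.

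There is essentially no obstacle beyond correctly packaging the spectral sequence; the only thing to verify is that the complex computing $\Jsharp(K;\Gamma)$ is finite-dimensional and free over $R$ (so that the universal coefficient / Tor spectral sequence is available), which is immediate from the construction in Definition~\ref{def:deformed-jsharp} since the number of critical points is finite and each $\Gamma_\beta$ is free of rank $1$. No additional input is needed beyond the existence of the spectral sequence already noted and the cited bridgeless non-vanishing theorem.
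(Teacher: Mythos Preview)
Your proof is correct and follows exactly the argument given in the paper: use the Tor spectral sequence to conclude that vanishing of $\Jsharp(K;\Gamma)$ forces vanishing of $\Jsharp(K)$, then invoke the non-vanishing theorem of \cite{KM-jsharp} for bridgeless webs. The paper additionally remarks that $\mathrm{Tor}^{R}_{p}$ vanishes for $p>3$ (since $\F$ has a free $R$-resolution of length $3$), but this is not needed for the vanishing implication you use.
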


\subsection{Two non-planar examples} 
\label{sec:counterexample}

The diagram 
on the left in Figure~\ref{fig:tangled-cuffs} is an example of a
non-planar web $K_{1}\subset \R^{3}$ for
which the ranks of $\Jsharp(K_{1})$ and $\Jsharp(K_{1}; \Gamma)$ are
different. 

\begin{figure}
    \begin{center}
        \includegraphics[scale=.35]{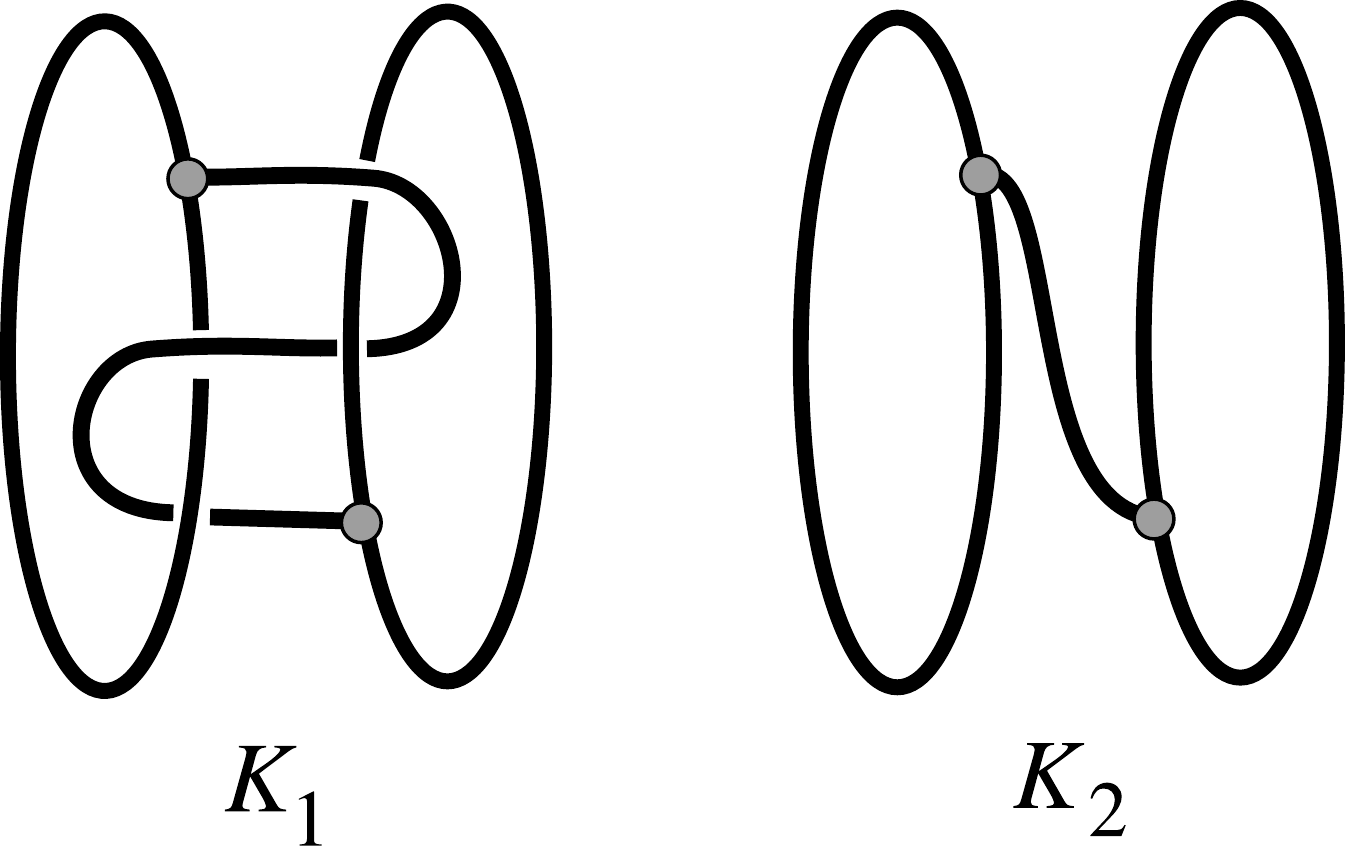}
    \end{center}
    \caption{\label{fig:tangled-cuffs}
   The tangled handcuffs and the standard handcuffs. }
\end{figure}

The web $K_{1}$ admits only one $1$-set, which is the singleton
$\{e_{1}\}$ consisting of the $S$-shaped ``chain'' of the handcuffs. From
the edge-decomposition, we therefore learn that
$\Jsharp(K_{1};\Gamma') = V(K_{1}; \{e_{1}\})$. Similarly, for the
other web in the picture, $\Jsharp(K_{2};\Gamma') = V(K_{2};
\{e_{2}\})$. 
From an application
of Proposition~\ref{prop:s-homology}, we have $V(K_{1}; \{e_{1}\})
=V(K_{2}; \{e_{2}\})$, and it follows that there is an isomorphism
\[
          \Jsharp(K_{1}; \Gamma') \cong \Jsharp(K_{2}; \Gamma').
\]
But the web $K_{2}$ has an embedded bridge, so $ \Jsharp(K_{2};
\Gamma') =0$. It follows that for the
``tangled handcuffs'' $K_{1}$, we have  $\Jsharp(K_{1};
\Gamma')=0$. For the coefficient system $\Gamma$, we do not have a
complete calculation, but we can record at least the following
consequence.

\begin{proposition}
  For the tangled handcuffs $K_{1}$ in Figure~\ref{fig:tangled-cuffs},
  we have
    \[
    \rank_{R} \Jsharp(K_{1}; \Gamma) = 0.
    \]
  It is a finitely-generated torsion module.
\end{proposition}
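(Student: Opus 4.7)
The plan is to string together several observations that have already appeared, essentially as a postscript to the preceding paragraph where $\Jsharp(K_{1};\Gamma')=0$ was established.

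First I would recall that $\Jsharp(K_{1};\Gamma)$ is by construction the homology of the finitely generated free differential $R$-module $C^{\sharp}(\check Y;\Gamma)=\bigoplus_{\beta}\Gamma_{\beta}$ from \eqref{eq:Csharp}. Since $R=\F[\Z^{3}]$ is a Noetherian integral domain (Laurent polynomial ring), the homology of such a complex is a finitely generated $R$-module. In particular its rank is well-defined as $\dim_{F}\Jsharp(K_{1};\Gamma)\otimes_{R}F$, where $F$ is the field of fractions of $R$.

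Next I would use the key point that $R'=R[P^{-1}]$ and $R$ share the same field of fractions $F$, because inverting the nonzero element $P\in R$ does not enlarge $F$. Consequently
\[
\rank_{R}\Jsharp(K_{1};\Gamma)=\dim_{F}\Jsharp(K_{1};\Gamma)\otimes_{R}F=\dim_{F}\Jsharp(K_{1};\Gamma')\otimes_{R'}F.
\]
The right-hand side vanishes, because the proposition's preceding paragraph has already shown $\Jsharp(K_{1};\Gamma')=0$ by comparing with the standard handcuffs $K_{2}$ via Proposition~\ref{prop:s-homology} and using the embedded bridge in $K_{2}$. This gives $\rank_{R}\Jsharp(K_{1};\Gamma)=0$.

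Finally, to conclude that $\Jsharp(K_{1};\Gamma)$ is a torsion module, I would invoke the standard fact that any finitely generated module of rank zero over an integral domain is a torsion module: every element $x$ has nonzero annihilator, since otherwise $1\otimes x$ would be a nonzero element of the zero-dimensional $F$-vector space $\Jsharp(K_{1};\Gamma)\otimes_{R}F$. There is no real obstacle here; the whole statement is essentially an unpacking of what the edge decomposition and the excision argument for the tangled handcuffs already deliver over $R'$, together with the observation that $R$ and $R'$ have the same field of fractions.
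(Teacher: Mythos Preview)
Your proposal is correct and matches the paper's own reasoning: the proposition is stated there as an immediate consequence of the preceding paragraph's computation that $\Jsharp(K_{1};\Gamma')=0$, together with the observation (made explicit at the end of the proof of Theorem~\ref{thm:deformed-is-Tait}) that $R$ and $R'$ share the same field of fractions, so the $R$-rank of $\Jsharp(K_{1};\Gamma)$ equals the $R'$-rank of $\Jsharp(K_{1};\Gamma')$. Your added remarks on finite generation and on rank zero implying torsion are exactly the housekeeping the paper leaves implicit.
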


The web $K_{1}$  does not have an embedded bridge. (Indeed,
its $\SO(3)$ representation variety is non-empty and consists of one fully
irreducible representation whose image in $\SO(3)$ is the octahedral
group.) So Theorem~\ref{thm:non-vanishing-Gamma} tells us that
$\Jsharp(K_{1} ; \Gamma)$ is non-zero. From the corresponding
non-vanishing theorem for $\Jsharp(K_{1})$ proved in \cite{KM-jsharp},
we also learn that
\[
              \dim_{\,\F} \Jsharp(K_{1}) > 0.
\]
The tangled handcuffs are therefore an example where there are
non-trivial differentials in the spectral sequence of
Proposition~\ref{prop:spectral-sequence}. 

\begin{figure}
    \begin{center}
        \includegraphics[scale=.35]{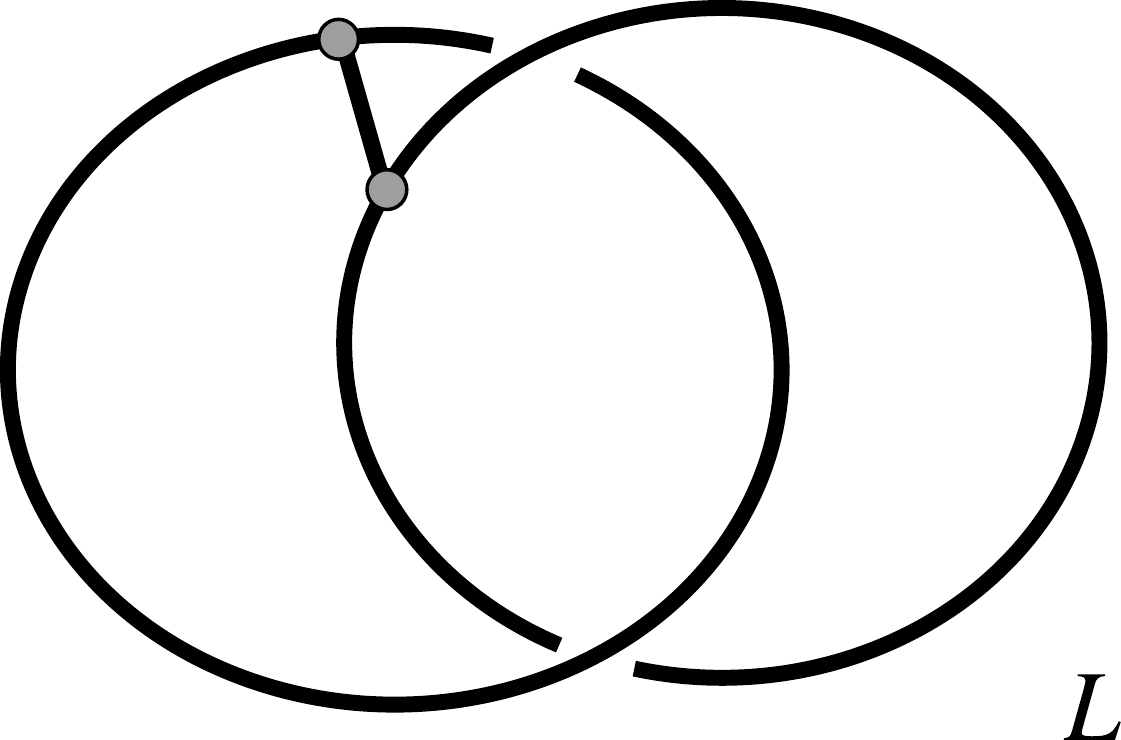}
    \end{center}
    \caption{\label{fig:hopf-cuffs}
   The linked handcuffs, $L$. }
\end{figure}

A related example is the ``linked handcuffs'' $L$ in Figure~\ref{fig:hopf-cuffs}.
There is an exact triangle (Proposition~\ref{prop:exact-triangle}) in
which the role of $L_{2}$ is played by the linked handcuffs and the
role of both $K_{1}$ and $K_{0}$ is played by the unknot $U$. The connecting
homomorphism is provided by a cobordism, $\Sigma$, from one unknot to
the other. This cobordism is obtained by starting with the cylindrical
cobordism $[0,1]\times U$ in $[0,1]\times \R^{3}$ and taking a
connected sum with the pair $(S^{4}, \RP^{2})$, where the $\RP^{2}$ is
standardly embedded in $S^{4}$ with self-intersection $-2$. A gluing
argument shows that the induced map
\[
             \Jsharp(U; \Gamma) \to \Jsharp(U; \Gamma)
\]
is equal to the  map arising from the $2$-dimensional
cohomology class \[\mathsf{z} \in H^{2}(\bonf^{\sharp}(U) ; \F)\] given by
\[
        \mathsf{z} = w_{2}(\mathbb{W}_{s}),
\]
where $\mathbb{W}_{s}$ is the rank-2 bundle \eqref{eq:bpoint-W}
corresponding to the point $s$ on the cylinder where the connected sum
is made. From the Whitney sum formula, we have
\[
                   \mathsf{z} = \mathsf{u}_{s}^{2} +
                   \mathsf{w}_{2,x}(\mathbb{E}_{x}) 
\]
so the corresponding operator on $\Jsharp(U;\Gamma)$ is $u_{e}^{2} + P$.
So there is a short exact sequence
\[
0\to \coker (u_{e}^{2}+P) \to \Jsharp(L ; \Gamma) \to \ker(u_{e}^{2}+P)
\to 0.
\]
The calculation in Proposition~\ref{prop:u-matrix-U} tell us that both
the kernel and cokernel of $u_{e}^{2}+P$ are free of rank $2$. So the
exact sequence of $R$-modules is
\[
0\to  R \oplus R  \to \Jsharp(L ; \Gamma) \to R \oplus R
\to 0,
\]
which necessarily splits. We record this result:

\begin{proposition}\label{prop:hopf-cuffs}
For the linked handcuffs $L$ in Figure~\ref{fig:hopf-cuffs}, we have
\[
        \Jsharp(L ; \Gamma) \cong R^{4}      
\]    
as an $R$-module. \qed
\end{proposition}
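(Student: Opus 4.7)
The plan is to extract $\Jsharp(L;\Gamma)$ from the exact triangle of Proposition~\ref{prop:exact-triangle} applied to an appropriate ``skein'' neighbourhood of $L$. Inside a small ball we resolve the clasp so that the web $L_2$ is $L$ itself, while both $K_1$ and $K_0$ become the unknot $U$; the two $L_i$ entries of the six-term picture of Figure~\ref{fig:skein} coming from the other crossing are also unknots, so the 3-periodic sequence \eqref{eq:LKK-exact} here reads
\[
     \cdots \to L \to U \to U \to L \to \cdots,
\]
where the map $U\to U$ is induced by a standard foam cobordism $\Sigma$.

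The heart of the argument is to identify that connecting map explicitly. I would construct $\Sigma$ as the connected sum of the cylinder $[0,1]\times U$ with $(S^4,\RP^2)$, where $\RP^2 \subset S^4$ has self-intersection $-2$. A Donaldson-style gluing argument (as in \cite{KM-book}, analogous to the calculation of the $w_{i,x}$ operators in Section~\ref{sec:op-props}) should identify the induced map on $\Jsharp(U;\Gamma)$ with the operator defined by the cohomology class
\[
          \mathsf{z} = w_2(\mathbb{W}_s) \in H^2(\bonf^\sharp(U);\F),
\]
where $\mathbb{W}_s$ is the rank-$2$ base-point bundle \eqref{eq:bpoint-W} at a point $s$ on the cylinder where the connected sum is formed. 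The Whitney sum decomposition $\tilde{\mathbb{E}}_s = \mathbb{L}_s \oplus \mathbb{W}_s$ then yields $\mathsf{z} = \mathsf{u}_s^2 + \mathsf{w}_{2,x}$, and combined with Proposition~\ref{prop:w2} the corresponding operator on $\Jsharp(U;\Gamma)$ is $u_e^2 + P$.

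Exactness of the triangle then produces a short exact sequence of $R$-modules
\[
         0 \to \coker(u_e^2 + P) \to \Jsharp(L;\Gamma) \to \ker(u_e^2 + P) \to 0.
\]
Using the explicit $3\times 3$ matrix for $u_e$ given in Proposition~\ref{prop:u-matrix-U}, a direct linear algebra calculation shows that $u_e^2 + P$ has both kernel and cokernel free of rank $2$, so the sequence takes the form $0\to R^2 \to \Jsharp(L;\Gamma) \to R^2 \to 0$. Since the quotient $R^2$ is free, the sequence splits and we obtain $\Jsharp(L;\Gamma)\cong R^4$.

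The main obstacle will be the gluing argument of the second step: one must justify carefully that the cobordism map induced by $\Sigma$ really is the operator attached to $w_2(\mathbb{W}_s)$, using the self-intersection $-2$ of the $\RP^2$ factor to pin down the relevant characteristic class in the way that $(S^4,\RP^2)$ interacts with the $\SO(3)$ base-point bundle. Once this identification is in place, the remaining steps are a straightforward application of the already-developed machinery (the matrix for $u_e$ from Proposition~\ref{prop:u-matrix-U} and the freeness of the resulting kernel and cokernel).
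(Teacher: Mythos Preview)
Your proposal is correct and follows essentially the same route as the paper: the paper also realizes $L$ as $L_2$ in the exact triangle with $K_1=K_0=U$, identifies the connecting cobordism as the cylinder summed with $(S^4,\RP^2)$ of self-intersection $-2$, computes the induced map as the operator corresponding to $w_2(\mathbb{W}_s)=\mathsf{u}_s^2+\mathsf{w}_{2,x}$ and hence $u_e^2+P$, and then reads off the free rank-$2$ kernel and cokernel from Proposition~\ref{prop:u-matrix-U} to split the resulting short exact sequence. Your identification of the gluing step as the point requiring care matches the paper, which likewise asserts it without further detail.
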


This example shows that the rank
of $\Jsharp(L ; \Gamma)$ is not always equal to the number of Tait
colorings for non-planar webs. (The linked handcuffs have no Tait
colorings.) 
With coefficients in the field $\F$, the
original $\Jsharp(L )$ has rank $4$ also, as can be seen using
essentially the same
exact sequence, because the kernel and cokernel of $u_{e}^{2}$ have
dimension $2$ in $\Jsharp(U)$.

\subsection{Vanishing of the first three differentials}

It is not hard to identify the differential on the $E_{1}$
page of the spectral sequence,
Proposition~\ref{prop:spectral-sequence}, arising from the filtration
of the local ring $R_{m}$. Recall that the local system is defined using maps to the
circle,
\[
            h_{i} : \bonf^{\sharp}(\check Y) \to \R/\Z, \qquad i=1,2,3
\]
described at \eqref{eq:h-maps-Y}. If $\mathsf{s}$ is the generator of
$H^{1}(\R/\Z; \F)$, then by pull-back we obtain classes
\[
           \mathsf{t}_{i} \in H^{1}( \bonf^{\sharp}(\check Y) ; \F), 
            \qquad i=1,2,3.
\]
As in section~\ref{sec:op-props}, these classes give rise to operators
\begin{equation}\label{eq:tau-ops}
                 \tau_{i} : \Jsharp(\check Y) \to \Jsharp( \check Y).
\end{equation}
Explicitly, let $h_{i}^{-1}(\delta) \subset \bonf^{\sharp}(\check Y)$ be a generic
level set of $h_{i}$ transverse to all $1$-dimensional moduli spaces
$M_{1}(\alpha,\beta)$. Then the matrix entries of the corresponding chain-map
$\tilde \tau_{i}$ on the
chain level are the mod 2 count of the intersection points
$M_{1}(\alpha,\beta)\cap h_{i}^{-1}(\delta)$. 

\begin{lemma}
    The differential on the $E_{1}$ page the spectral sequence
    of Proposition~\ref{prop:spectral-sequence}
    is the operator
    \[
               d_{1}
                        : \Jsharp(\check Y) \otimes \left(
                            \fm^{p}/\fm^{p+1} \right)
                          \to  \Jsharp(\check Y) \otimes\left( \fm^{p+1}/\fm^{p+2} \right),
    \]
    given by 
    \[
             d_{1} = \sum_{i=1}^{3} \tau_{i} \otimes (1- \bar  T_{i})
   \]
   where $\bar T_{i}$ is the homomorphism $\fm^{p}/\fm^{p+1}
                          \to  \fm^{p+1}/\fm^{p+2}$ given by
                          multiplication by $T_{i}$. In particular,
                          $d_{1}$ is zero if and only if each
                          $\tau_{i}$ is zero.
\end{lemma}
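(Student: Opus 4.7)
The plan is to expand the differential $\partial_m$ in its $\fm$-adic Taylor series and read off the $d_1$ term. For each index-$1$ gradient trajectory $\zeta$ from $\alpha$ to $\beta$, write $n_i(\zeta) = \Delta\tilde h_i(\zeta)$, so that $\Gamma_\zeta = T_1^{n_1}T_2^{n_2}T_3^{n_3}$. Substituting $T_i = 1 + (T_i-1)$ and collecting terms gives
\[
\Gamma_\zeta \equiv 1 + \sum_i n_i(\zeta)\,(T_i - 1) \pmod{\fm^2},
\]
since every cross-term $(T_i-1)(T_j-1)$ and higher power lies in $\fm^2$. Summing over trajectories, the matrix entry of $\partial_m$ from $\alpha$ to $\beta$ is
\[
(\partial_m)_{\alpha,\beta} \equiv (\partial)_{\alpha,\beta} + \sum_i (\tilde\tau_i)_{\alpha,\beta}\,(T_i - 1) \pmod{\fm^2},
\]
where $\partial$ is the Floer differential of $\Jsharp(\check Y)$ and $(\tilde\tau_i)_{\alpha,\beta} := \sum_{\zeta} n_i(\zeta) \bmod 2$.

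Next I would read off the spectral sequence. On $E_0^p = \fm^p C/\fm^{p+1}C$ each $T_i$ acts as $1$, so $d_0 = \partial \otimes 1$ and $E_1^p = \Jsharp(\check Y) \otimes (\fm^p/\fm^{p+1})$, confirming Proposition~\ref{prop:spectral-sequence}. Expanding $\partial_m^2 = 0$ to first order in $\fm$ and using the $\F$-linear independence of the classes $T_i - 1$ in $\fm/\fm^2$ yields $\partial\tilde\tau_i + \tilde\tau_i \partial = 0$ for each $i$, so each $\tilde\tau_i$ descends to an operator on $\Jsharp(\check Y)$. Lifting any $d_0$-cycle $x \in \fm^p C$ and applying $\partial_m$, the image in $\fm^{p+1}C/\fm^{p+2}C$ is $\sum_i (1 - T_i)\,\tilde\tau_i x$ (using $T_i - 1 = 1 - T_i$ in characteristic $2$), which gives $d_1 = \sum_i \tilde\tau_i \otimes (1 - \bar T_i)$.

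To identify $[\tilde\tau_i]$ with $\tau_i$, observe that $\Delta\tilde h_i(\zeta) \bmod 2$ is precisely the mod-$2$ count of crossings of the circle-valued path $h_i\circ\zeta\colon \R \to \R/\Z$ over a generic point $\delta$, i.e.\ the number of intersection points of the parametrised trajectory with the codimension-one submanifold $h_i^{-1}(\delta)$. Summing over $\zeta$ yields
\[
(\tilde\tau_i)_{\alpha,\beta} = \#\bigl(M_1(\alpha,\beta) \cap h_i^{-1}(\delta)\bigr) \bmod 2,
\]
which is exactly the chain-level formula for $\tau_i$ given in the statement. For the final equivalence, the elements $\{1 - \bar T_i\}_{i=1,2,3}$ form an $\F$-basis of $\fm/\fm^2$; because $\gr R_m \cong \F[T_1 - 1, T_2 - 1, T_3 - 1]$ is a polynomial ring, the three shift maps $(1 - \bar T_i)\colon \fm^p/\fm^{p+1} \to \fm^{p+1}/\fm^{p+2}$ have $\F$-linearly disjoint images. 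Hence $\sum_i \tau_i(\alpha) \otimes (1 - \bar T_i) = 0$ forces each $\tau_i(\alpha) = 0$, so $d_1 = 0$ iff every $\tau_i$ vanishes. The only substantive step is the identification of $\tilde\tau_i$ with $\tau_i$, which reduces to recognising $\Delta\tilde h_i(\zeta)$ as a mod-$2$ intersection number with a generic level set of $h_i$.
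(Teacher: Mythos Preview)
Your proof is correct and follows the same approach as the paper: expand $\partial_m$ to first order in $\fm$ and read off the linear term as $\sum_i \tilde\tau_i(1-T_i)$. The paper is slightly more explicit about first trivializing $\Gamma_m$ on the complement of the level sets $V_i = h_i^{-1}(\delta)$, so that the exponents $n_i(\zeta)$ are integer signed intersection numbers with $V_i$ rather than the real numbers $\Delta\tilde h_i$ (this is what makes the binomial expansion of $T_i^{n_i}$ in $R_m$ legitimate), but your subsequent identification of $n_i \bmod 2$ with the mod-$2$ crossing count recovers this, and otherwise the arguments are essentially identical.
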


\begin{proof}
    The formula for $d_{1}$ arises by expanding the expression for
    $\partial_{m}$ around $T_{i}=1$. Let us trivialize the local
    system $\Gamma_{m}$ on the complement of $V_{1}\cup V_{2}\cup
    V_{3}$, so that we identify $\Gamma_{m,\beta} = R_{m}$ for all
    critical points $\beta$, and for a path $\zeta$ which is transverse to the
    three $V_{i}$ we have 
    \[
         \Gamma_{m,\zeta}= T_{1}^{n_{1}} T_{2}^{n_{2}} T_{3}^{n_{3}},
     \]
    where $n_{i}$ is the signed intersection number of $\zeta$ with
    $V_{i}$. Modulo $\fm^{2}$, this is equal to
    \[
                 1 + \sum_{1}^{3} \bar{n}_{i} (1-T_{i})
    \]
    where $\bar{n}_{i}$ is the mod 2 residue of $n_{i}$. Thus, at the
    chain level, we have 
    \begin{equation}\label{eq:m-expansion}
              \partial_{m} = \partial + \sum_{1}^{3} \tilde \tau_{i} (1 - T_{i}) + x,
    \end{equation}
    where the matrix entries of $x$ belong to $\fm^{2}$ and $\partial$
    is the ordinary differential on $\Jsharp(\check Y)$, extended to
    the trivial local system with fiber $R_{m}$.
\end{proof}

It turns out that the operators $\tau_{i}$ on $\Jsharp(\check Y)$ are
identically zero. In fact, we have more.

\begin{proposition}\label{prop:d4-ss}
    For any bifold $\check Y$, the differentials $d_{1}$, $d_{2}$,
    $d_{3}$ in the spectral sequence of
    Proposition~\ref{prop:spectral-sequence} are zero.
\end{proposition}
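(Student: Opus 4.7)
The plan is to establish $d_1 = 0$ first and then bootstrap to $d_2$ and $d_3$ by a Taylor-expansion argument around $T_i = 1$.

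For $d_1 = 0$, the lemma immediately preceding the proposition reduces the task to showing that each of the three operators $\tau_i \colon \Jsharp(\check Y) \to \Jsharp(\check Y)$ defined at \eqref{eq:tau-ops} vanishes. The class $\mathsf{t}_i = h_i^{*}\mathsf{s}$ is the pullback of the generator of $H^1(\R/\Z;\F)$ under the holonomy map $h_i$ of \eqref{eq:h-maps-Y}, which measures holonomy along the $i$-th arc of the auxiliary $\theta$-graph. I would represent $\mathsf{t}_i$ by a codimension-$1$ subvariety of $\bonf^{\sharp}(\check Y)$ cut out by fixing this holonomy, then interpret the operator $\tau_i$ as the evaluation of an explicit foam cobordism decorated at a basepoint on the $i$-th edge of $\theta$. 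A direct computation on this cobordism, using the undeformed relations on $\Jsharp(\check Y)$ furnished by Proposition~\ref{prop:vertex-relations} reduced modulo $\mathfrak{m}$ (where $P \in \mathfrak{m}^4$ drops out, making the middle relation $u_{e_2}u_{e_3} + u_{e_3}u_{e_1} + u_{e_1}u_{e_2} = 0$), should identify $\tau_i$ with zero.

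For $d_2$ and $d_3$, expand the perturbed differential as
\[
\partial_{\mathfrak{m}} = \partial + \sum_i \tilde\tau_i x_i + \sum_{i \le j} \tilde\tau_{ij}\, x_i x_j + \sum_{i \le j \le k} \tilde\tau_{ijk}\, x_i x_j x_k + \cdots,
\]
in the variables $x_i = 1 - T_i$. The identity $\partial_{\mathfrak{m}}^{2} = 0$ yields order-by-order relations among the operators $\tilde\tau_I$. Once $\tau_i = 0$ on homology, there exist chain homotopies $\eta_i$ with $\tilde\tau_i = [\partial, \eta_i]$; after replacing $\partial_{\mathfrak{m}}$ by its conjugate under $\exp\!\bigl(\sum_i \eta_i x_i\bigr)$, we may assume $\tilde\tau_i = 0$ on chains, and $d_2$ on the $E_2$ page is then read off from the quadratic coefficients $\tilde\tau_{ij}$. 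These correspond to the classes $\mathsf{t}_i \mathsf{t}_j$ pulled back under $(h_i,h_j)$ from $H^{2}((\R/\Z)^{2};\F)$. An extension of the geometric and algebraic argument used for $\tau_i$ shows these operators to be null-homotopic, and the cubic coefficients corresponding to $d_3$ are killed the same way. The decisive input is that $P$ vanishes to order $4$ at $(1,1,1)$: the cubic relation $u_e^{3} + P u_e = 0$ on $\Jsharp(\check Y;\Gamma_{\mathfrak{m}})$ is indistinguishable from the undeformed $u_e^{3} = 0$ modulo $\mathfrak{m}^{3}$, so no differential before $d_4$ has the room to be nonzero.

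The main obstacle is the explicit vanishing $\tau_i = 0$, which requires a concrete identification of $\mathsf{t}_i$ that allows one to recognize the class as trivial on $\Jsharp(\check Y)$. Once the first page is handled, the higher-page arguments are systematic but involve careful bookkeeping of Taylor coefficients and iterated chain homotopies.
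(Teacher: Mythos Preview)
Your approach is quite different from the paper's, and as written it has a genuine gap.  The paper does not attempt to identify $\tau_i$ geometrically or to bootstrap through chain homotopies.  Instead it reformulates the vanishing of $d_1,d_2,d_3$ as a dimension count for instanton homology with coefficients in the truncated module $\delta_k = m^k/m^{k+4}$, and then proves that count by an excision argument.  The key computation is for the marked bifold $\bH\csum\bT$ (Hopf link plus theta), where a skein argument shows the Morse complex is quasi-isomorphic to $R^4$ with differential given by multiplication by $P$; since $P\in m^4$, this differential vanishes on $\delta_k$.  Excision then transfers this to arbitrary $\check Y$.  So the fact that $P$ vanishes to order $4$ enters the paper's proof in a completely concrete way, as the actual differential on an explicit complex.

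Your proposal, by contrast, never carries out the central step.  You say you would interpret $\tau_i$ as a foam operator decorated on the $i$-th edge of $\theta$ and then invoke the vertex relations, but you do not establish that the cohomology class $\mathsf{t}_i = h_i^*\mathsf{s}$ coincides with (or is expressible in terms of) the class $\mathsf{u}_s$ that defines the edge operators; these are a priori different $1$-dimensional classes on $\bonf^\sharp(\check Y)$.  Even granting such an identification, the vertex relations $u_1+u_2+u_3=0$, $u_1u_2u_3=0$, and (mod $\fm$) $u_2u_3+u_3u_1+u_1u_2=0$ do not by themselves force each $u_i$ to vanish on $\Jsharp(\check Y)$; you would need an additional excision-type argument to localize the operator to the $\theta$ summand.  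The treatment of $d_2$ and $d_3$ is then only a sketch resting on this unproved first step: the phrase ``an extension of the geometric and algebraic argument used for $\tau_i$'' is not an argument, and your final heuristic --- that the relation $u_e^3+Pu_e=0$ is indistinguishable from $u_e^3=0$ modulo $\fm^3$ --- gives no mechanism linking the operators $u_e$ on edges of $K$ to the spectral-sequence differentials, which arise from the holonomy maps on the auxiliary $\theta$.
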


\begin{proof}
Fix an integer $k>0$, and consider the $R$-module \[ \delta_{k} =
m^{k} / m^{k+4}, \] where
$m$ is the maximal ideal at $T_{i}=1$. Let $\Delta_{k}$ be the local
coefficient system obtained from $\Gamma$ by tensoring with this module:
\[
    \Delta_{k} = \Gamma \otimes_{R} \delta_{k}.
\]
For each $k$, we then have instanton homology groups
$\Jsharp (\check Y; \Delta_{k})$. The local system is a system of
$R/ m^{4}$ modules, and this ring has an $m$-adic filtration, of
finite length, as does the module $m^{k} /m^{k+4}$. The latter
filtration has associated graded
\[
      \gr_{p} ( \delta_{k} ) =
      \begin{cases}
          m^{k+p} / m^{k+p+1}, & 0 \le p \le 3 ,\\
           0, & p \ge 4.
      \end{cases}
\]
 For any $\bY$, we have the usual
spectral sequence,
\begin{equation}\label{eq:Delta-ss}
         \Jsharp(\check Y) \otimes \gr (\delta_{k}) \implies \gr
         \Jsharp (\check Y ; \Delta_{k}),
\end{equation} 
and the assertion that the differentials $d_{r}$ are zero for $r\le 3$
in the original spectral sequence of
Proposition~\ref{prop:spectral-sequence} is equivalent to saying that
\emph{all} the differentials in the spectral sequence
\eqref{eq:Delta-ss} are zero. This in turn is equivalent to an
equality of dimensions of finite-dimensional $\F$-vector spaces,
\begin{equation}\label{eq:dimension-Delta-product}
                \dim\Jsharp( \check Y; \Delta_{k} ) = \dim
                \Jsharp( \check Y) \times \dim\delta_{k}.
\end{equation}

To prove the equality \eqref{eq:dimension-Delta-product} and so
complete the proof of Proposition~\ref{prop:d4-ss}, we will draw on the material of
section~\ref{subsec:replace}, so we again consider the orbifold $(S^{3},
H)$ corresponding to the Hopf link, and the marking data $\mu_{H}$
with $w_{2}(E_{\mu})$ non-zero. In order to keep the notation more
compact, we write
\[
      \bH = ((S^{3}, H) ; \mu_{H})
\]
for this marked orbifold, and  we similarly introduce
\[
      \bT = ((S^{3}, \theta) ; \mu_{\theta}).
\]
We write $\bY = (\check Y, \mu_{Y})$ for an arbitrary auxiliary
orbifold, with possibly empty marking. 
 We consider the instanton homology
group
$
        J \bigl( \bH \csum \bT ),
$
and the variant with local coefficients,
$
           J \bigl( \bH \csum \bT ; \Gamma),
$
where the local system as usual is obtained from the marked $(S^{3},
\theta)$ summand by the usual circle-valued functions.

\begin{lemma}
    The Morse complex whose homology is $  J \bigl( \bH \csum \bT ; \Gamma)$ is quasi-isomorphic to a free module of
    rank $4$ with the differential given by
    \[
            \partial_{\,\Gamma} = \begin{pmatrix}   0 & 0 & P & 0 \\
                                                  0 & 0 & 0 & P \\
                                                  0 & 0 & 0 & 0 \\
                                                  0 & 0 & 0 & 0 \end{pmatrix}.
    \]
    In particular, $  J ( \bH \csum \bT ; \Gamma)$ is isomorphic to a sum of two copies of $R/(P)$.
\end{lemma}

\begin{proof}
The proof uses a skein exact sequence, and is very close to the
calculation of $\Jsharp( L ; \Gamma)$ for the ``linked handcuffs'' in
Proposition~\ref{prop:hopf-cuffs}. The skein sequence that holds when
the crossing is entirely contained in the marking region is the usual
skein sequence for links, as developed in \cite{KM-unknot}. We deduce
that the Morse complex which computes $J(\bH \csum \bT ;\Gamma)$ is
quasi-isomorphic to the mapping cylinder of a certain chain map
\begin{equation}\label{eq:U-to-U-Isharp}
               C( \bU \csum \bT ;\Gamma) \to  C( \bU \csum \bT ;\Gamma),
\end{equation}
where $\bU$ is the unknot. The chain map arises from the same
cobordism as in the proof of Proposition~\ref{prop:hopf-cuffs}, namely
the connect sum of the cylindrical cobordism with $(S^{4}, \RP^{2})$,
where the $\RP^{2}$ has self-intersection $-2$. Now, however, the
marking region encompasses the whole of the cobordism, and the marking
data has $w_{2}$ non-zero on the complement of the $\RP^{2}$ in
$S^{4}$. This map \eqref{eq:U-to-U-Isharp} is the operator
corresponding to the class 
\[
        \mathsf{\tilde z} = w_{2}(\tilde{\mathbb{W}}_{s}),
\]
just as in Proposition~\ref{prop:hopf-cuffs}, but now
$\tilde{\mathbb{W}}_{s}$ is the \emph{orientable} rank 2 bundle
corresponding to a base-point on $\bU$. This is the same as the
basepoint operator $w_{2,x}$, which acts by multiplication by $P$.
The instanton homology $ J( \bU \csum \bT ;\Gamma)$ is free of rank
$2$, and arises from a complex with trivial differential, 
because the representation variety is a $2$-sphere. (This is the
calculation of $\Isharp(U)$ from \cite{KM-unknot}.) So the complex
$C( \bH \csum \bT ;\Gamma)$ is quasi-isomorphic to the mapping cone of
multiplication by $P$,
\[
          R \oplus R \stackrel{P}{\to} R \oplus R,
\]
which is what the lemma states.
\end{proof}

\begin{corollary}\label{cor:H-trivial}
   With the local coefficient system $\Delta_{k}$, the instanton
   homology $J \bigl( \bH \csum \bT ; \Delta_{k})$ is the direct sum
   of four copies of the module $\delta_{k}$.
\end{corollary}

\begin{proof}
    The calculation \eqref{eq:PS} shows that the element $P$ belongs
    to the ideal $m^{4} \ideal R$. So the action of $P$ on
    $\delta_{k}$ is zero. The previous lemma therefore tells us that
    the the Morse complex that computes $J \bigl( \bH \csum \bT ;
    \Delta_{k})$ is four copies of the fiber $\delta_{k}$ and the differential is zero.
\end{proof}

To continue the proof of Proposition~\ref{prop:d4-ss},
we now pass from the special case $\bH \csum \bT$
to the general case $\Jsharp(\bY) = J(\bY \csum \bT)$, by excision. As
in section~\ref{subsec:replace}, there are excision cobordisms of
marked bifolds, from (a) to (b) and from (b) to (c), inducing
isomorphisms on $J$ in each case:
\[
\begin{aligned}
   & \text{(a)} & \bY_{a} &= (\bT \csum \bH) \cup (\bT \csum \bY ) \cup (\bH)
   \\ 
   & \text{(b)} & \bY_{b} &=  (\bT ) \cup (\bT  \csum \bH \csum \bY ) \cup (\bH)
   \\ 
    & \text{(c)} & \bY_{c} &=  (\bT ) \cup (\bT  \csum \bH ) \cup (\bH  \csum \bY).
   \\ 
\end{aligned}
\]
In each case, two of the connected components contain a marked theta
graph $\bT$, so for each of (a)--(c) we may define a map to the torus
$T^{3}$ by using the sum of the maps coming from the two copies. In
this way, we have a local system $\Gamma$ over the configuration
spaces in all three cases. The maps obtained from the excision
cobordisms give maps on instanton homology with local coefficients,
and the composite of the two gives an isomorphism
\[
              J (\bY_{a} ; \Delta_{k}) \cong J(\bY_{c}; \Delta_{k})
\]

The contribution of $\bH$ to the calculation of the instanton homology
of $\bY_{a}$ is trivial,
and the Morse complex $C(\bY_{a} ; \Delta_{k})$ that computes  $J
(\bY_{a} ; \Delta_{k})$ can be described as a tensor product,
\[
             C( \bT \csum \bH ; \Delta_{k}) \otimes_{R} C(\bT \csum
             \bH; \Gamma).
\]
Corollary~\ref{cor:H-trivial} therefore gives an isomorphism,
\[
              C (\bY_{a} ; \Delta_{k}) \cong  C(\bT \csum
             \bY;  \Delta_{k})^{\oplus 4}
\]
and hence an isomorphism,
\[
              J (\bY_{a} ; \Delta_{k}) \cong  J(\bT \csum
             \bY;  \Delta_{k})^{\oplus 4}.
\]

On the other hand, the contribution of the first $\bT$ in the
calculation for the instanton homology of $\bY_{c}$ is trivial, and
the local coefficient system comes only from the $(\bT \csum \bH)$
term. So the Morse complex has a description,
\[
                        C (\bY_{c} ; \Delta_{k}) \cong
                        (\delta_{k})^{\oplus 4} \otimes_{\,\F} C(\bH \csum
             \bY).
\]
So we have isomorphisms
\[
\begin{aligned}
    J (\bY_{c} ; \Delta_{k}) &\cong (\delta_{k})^{\oplus 4}
    \otimes_{\,\F} J(\bH \csum \bY) \\
 &\cong (\delta_{k})^{\oplus 4}
    \otimes_{\,\F} J(\bT \csum \bY) .
\end{aligned}
\]
Comparing these expressions for $\bY_{a}$ and $\bY_{c}$, we see that
\[
          \dim  J(\bT \csum
             \bY;  \Delta_{k}) =  \dim J(\bT \csum \bY) \times \dim (\delta_{k}) 
\]
which becomes the desired inequality
\eqref{eq:dimension-Delta-product} when we take $\bY$ to be the
unmarked bifold $\check Y$. This completes the proof of
Proposition~\ref{prop:d4-ss}.
\end{proof}

\bibliographystyle{abbrv}
\bibliography{deformed}

\end{document}